\documentclass[12pt]{amsart}
%\documentclass{article}
%\documentclass[a4paper,11pt,twoside]{article}
%\usepackage{silence}
%\WarningFilter{hyperref}{Token not allowed in a PDF string}
\usepackage{amsmath,amsthm,amssymb,amsfonts,graphicx, hyperref}
\usepackage{tikz}
\usetikzlibrary{shapes.geometric,calc}
\usepackage{color}
\usepackage{todonotes}
\usepackage{bm}
\usepackage[margin = 2.5cm]{geometry}
\usepackage[capitalise]{cleveref}
\usepackage{array}
\usepackage{mdwlist}
\usepackage{algorithmic}
\usepackage{algorithm}
\usepackage{oplotsymbl}
\usepackage[numbers]{natbib}
\usepackage{xparse}
 \usepackage{amsaddr}
\usepackage{xpatch}
\makeatletter
\xpatchcmd{\@settitle}{\uppercasenonmath\@title}{\Large}{}{\PatchFailure}
\makeatother

\title[Low degree sum-of-squares bounds for the stability number]{Low degree sum-of-squares bounds for the stability number: a copositive approach} 
\author{L\MakeLowercase{uis} F\MakeLowercase{elipe} V\MakeLowercase{argas}\textsuperscript{1} \quad \quad
        J\MakeLowercase{uan} C. V\MakeLowercase{era}\textsuperscript{2} 
        \quad \quad P\MakeLowercase{eter} J.C. D\MakeLowercase{ickinson}\textsuperscript{3}}
\thanks{The first author was supported by the Swiss National Science Foundation project No.200021 207429 / 1 Ideal Membership
Problems and the Bit Complexity of Sum of Squares Proofs}

\thanks{\textsuperscript{1}University of Applied Sciences and Arts of Southern Switzerland SUPSI-IDSIA.
E-mail: \href{mailto:luis.vargas@supsi.ch}{luis.vargas@idsia.ch}.}
\thanks{\textsuperscript{2}Department of Econometrics and Operations Research, Tilburg University, The Netherlands. 
E-mail: \href{mailto:j.c.veralizcano@tilburguniversity.edu}{j.c.veralizcano@tilburguniversity.edu}.}
\thanks{\textsuperscript{3}Rabobank, The Netherlands, and University of Twente, The Netherlands. 
E-mail: \href{mailto:peter.jc.dickinson@gmail.com}{peter.jc.dickinson@gmail.com}.}

\newtheorem{defin}{Definition}[section]
\newtheorem{definition}[defin]{Definition}
\newtheorem{proposition}[defin]{Proposition}
\newtheorem{theorem}[defin]{Theorem}

\newtheorem{corollary}[defin]{Corollary}
\newtheorem{lemma}[defin]{Lemma}
\newtheorem{example}[defin]{Example}

\newtheorem{conjecture}{Conjecture}

% Set operators
\DeclareMathOperator{\interior}{int}

% Sets
\newcommand{\R}{\mathbb{R}}

\newcommand{\COP}{\operatorname{\mathcal{C\!O\!P}}}

\newcommand{\newQ}{\operatorname{\widetilde\CQ}}

\newcommand{\CI}{\operatorname{\mathcal{I}}}
\newcommand{\CN}{\operatorname{\mathcal{N}}}
\newcommand{\CH}{\operatorname{\mathcal{H}}}

\newcommand{\CK}{\mathcal{K}}
\newcommand{\CS}{\mathcal{S}}

\newcommand{\CQ}{\mathcal{Q}}
\newcommand{\CC}{\mathcal{C}}

\newcommand{\qrcert}{\CQ^{(r)}{\rm \text{-certificate}}}
\newcommand{\supp}{\rm\text{supp}}

\newcommand{\N}{\operatorname{\mathbb{N}}}

\newcommand{\nnrank}{\operatorname{\newNu-rank}}

\newcommand{\nrank}{\operatorname{\nu-rank}}

\newcommand{\newNu}{\operatorname{\tilde \nu}}

% Vectors

\newcommand{\vd}{\mathbf{d}}

\newcommand{\val}{{\bm{\alpha}}}

% Coefficientwise (in)equalities

\newcommand{\SOS}[1]{\operatorname{\Sigma}_{#1}}
\newcommand{\Rplus}[1]{\operatorname{\mathcal N}_{#1}}

% Misc

%tikz
\usepackage{tikz}
\usetikzlibrary{shapes}
\usetikzlibrary{positioning}
%\tikzset{main node/.style={circle,fill=blue!20,draw,minimum size=0.8cm,inner sep=0pt},}
\tikzset{top node/.style={shape=circle,draw,minimum size=0.8cm,inner sep=2pt},}
\tikzset{centre node/.style={star,star points=12,draw,minimum size=0.8cm,inner sep=0pt},}
\tikzset{bottom node/.style={shape=diamond,draw,minimum size=0.8cm,inner sep=1pt},}

%\newcommand*{\rectangled}[1]{\tikz[baseline=(char.base)]{
%            \node[shape=rectangle,draw,inner sep=2pt] (char) {#1};}}

%Algorithm

\Crefname{subsection}{Subsection}{Subsections}
\crefname{subsection}{Subsection}{Subsections}

% Strike old, blue new
\begin{document}

\maketitle

\begin{abstract}

The stability number of a graph \(G\), denoted as \(\alpha(G)\), is the maximum size of an independent (stable) set in \(G\). Semidefinite programming (SDP) methods, which originated from Lov\'asz’s theta number and expanded through lift-and-project hierarchies as well as sums of squares (SOS) relaxations, provide powerful tools for approximating \(\alpha(G)\).

We build upon the copositive formulation of \(\alpha(G)\) and introduce a novel SDP-based hierarchy of inner approximations to the copositive cone \(\COP_n\), which is derived from structured SOS representations. This hierarchy preserves essential structural properties that are missing in existing approaches, offers an SDP feasibility formulation at each level despite its non-convexity, and converges finitely to \(\alpha(G)\).
Our results include examples of graph families that require at least \(\alpha(G) - 1\) levels for related hierarchies, indicating the tightness of the de Klerk–Pasechnik conjecture. Notably, on those graph families, our hierarchy achieves \(\alpha(G)\) in a single step.

\end{abstract}
%\tableofcontents

\section{Introduction}

\nocite{*}

Computing the \emph{stability number} of a graph is a fundamental problem in combinatorial optimization and graph theory. Given a graph $G = (V,E)$, the objective is to find $\alpha(G)$, the size of the largest subset of vertices $S\subseteq V$  so that no two vertices in $S$  are adjacent. Due to the NP-hardness of this problem, optimization hierarchies to approximate or bound the stability number have been proposed in the literature.

Semidefinite programming (SDP) hierarchies, particularly the Lasserre hierarchy for binary optimization~\cite{lasserre2001_0_1} and the Lov\'asz-Schrijver SDP-hierarchy~\cite{lovaszSchrijver} serve as powerful tools for both approximating and computing the stability number of a graph. By systematically introducing high-dimensional semidefinite constraints, these methods achieve tighter relaxations approximating $\alpha(G)$  with increasing accuracy. Despite the computational challenges that still exist, advancements in SDP solvers and approximation techniques position these hierarchies as cornerstones of modern combinatorial optimization. Notably, both hierarchies converge in a finite number of steps when computing \( \alpha(G) \), with the \( \alpha(G)^{th} \)-level being exact in both cases. Furthermore, \citet{LeventExample} construct examples of graph families for which a constant factor of $\alpha(G)$ steps are necessary for the convergence of the Lov\'asz-Schrijver SDP-hierarchy.

This paper explores optimization hierarchies for the stability number grounded in copositive optimization. This cone-based approach effectively captures the combinatorial structure inherent to the stability number problem. Various LP/SDP hierarchies of inner approximations for the copositive cone, \( \COP_n \), have been proposed in the literature (see,  e.g., \cite{deKlerk_Pasechnik_2002,Pena_Vera_Zuluaga_2007, Dickinson_Povh_2013, LV21b}). We study some of these SDP-approximations, which are developed by considering {\em sufficient} conditions for copositivity using sums of squares of polynomials (sos).

By leveraging sos-hierarchies approximating the copositive cone, sequences of SDP-bounds asymptotically converging to $\alpha(G)$ are obtained.
Analyzing the convergence behaviour of these sequences is, in general, very challenging. For instance, finite convergence has been a topic of much interest. \citet{deKlerk_Pasechnik_2002} conjectured that a particular hierarchy converges in $\alpha(G)-1$ steps. Still, only recently, it has been proved that the sequence is finitely convergent~\cite {SV24}. On the other hand,  constructions of graph families for which the hierarchy requires \(\sim\alpha(G) \) steps for convergence remain absent from the literature.

This paper addresses some significant gaps in the existing literature. Firstly, we present families of graphs for which at least \( \alpha(G) - 1 \) steps are necessary for the convergence of a hierarchy closely related to the one used in the Klerk-Pasechnik conjecture. This finding suggests that if the conjecture holds, it may be tight. Secondly, we highlight that the main challenges in analyzing the convergence of such sequences stem from the absence of fundamental structural properties of the corresponding hierarchies. To address this issue, we propose a new hierarchy that approximates the copositive cone, possessing most of these essential properties (see Section~\ref{sec:StructureTilde}). 

However, a limitation of the proposed hierarchy is that its levels are generally not convex. Each level of the hierarchy is described by a \emph{bilinear} SDP-constraint. Nevertheless, the incidence relation for each level can be formulated as a linear matrix inequality corresponding to an SDP feasibility problem. Using this hierarchy, we construct a sequence of upper bounds that converges to \( \alpha(G) \) (see Section~\ref{sec:nuTilde}).

By leveraging the structural properties of the proposed cones, we show the finite convergence of the obtained sequence of upper bounds on $\alpha(G)$. Our bound is equal to $\alpha(G)$ when the level $r= (\tfrac {|G|}{\alpha} + 1)^\alpha$  of the hierarchy is used (see \cref{cor:dGenBound}). Also, we show that for graphs with a constant number of independent sets of size $\alpha(G)$, the number of steps required to converge is independent of $n$. Moreover, we show that our sequence of upper bounds is much stronger than the existing ones (see ). Indeed, we show several classes of graphs for which the level-1 bound equals $\alpha(G)$, while the existing hierarchies require $\sim\alpha(G)$ levels for convergence, which is also linear in the number of nodes.

An interesting byproduct of our construction is that we write the incidence problem for the  $5\times 5$-copositive cone as an SDP-feasibility problem. 
In contrast, \citet{bodirsky2024spectrahedral} has shown that the  $n\times n$-copositive cone is not a spectrahedral shadow for $n \ge 5$.

\subsection{Copositive model of the independence number}

A symmetric matrix \( M \) is copositive if its associated quadratic form \( x^T Mx \) is nonnegative over the nonnegative orthant \( \R^n_+ \). The class of copositive matrices is fundamental within various mathematical domains. Their significance in optimization is highlighted by the ability to model many challenging combinatorial optimization problems as linear optimization over the cone of copositive matrices \cite{bomze2012copositive, dur2010copositive, Burer2015AGG, Dr2021ConicOA}.

Recall the formulation for the independence number given by \citet{deKlerk_Pasechnik_2002}. Given an $n$-vertex graph  $G$, let $A_G$,  $I$, and $J$ denote, respectively, the adjacency matrix of $G$, the identity and the all-ones matrices, all of size $n \times n$. Then, we have the following identity:
\begin{align}\label{alpha-cop}
\alpha(G)=\min \{t : t(A_G +I)-J\in \COP_n\},
\end{align}
where we denote the cone of copositive matrices by
\[
\COP_n = \{M \in \CS^n: x^T Mx \ge 0 \text{ for all }x\in \R^n_+\}.
\]

The computational complexity counterbalances the broad modeling capabilities of the copositive cone. Linear optimization over $\COP_n$ is known to be an NP-hard problem, and the problem of determining whether a matrix is copositive is co-NP-complete~\cite{murty1985coNP}. In light of these computational challenges, some tractable conic approximations for \( \COP_n \) have been introduced, which we will discuss next.

\subsection{Sum-of-squares approximations for $\COP_n$}
A polynomial $p$ is called a {\em sum of squares}  (sos) if $p=\sum_{i=1}^m q_i^2$ for some other polynomials $q_i$. Sums of squares are important because they serve as certificates of nonnegativity; if a polynomial is a sum of squares, it is nonnegative on $\mathbb{R}^n$. We denote the cone of sums of squares on $n$-variables as \( \SOS n \) and the cone of those of degree at most \( d \) as \( \SOS {n,d} \).\\

Now, we recall the following two inner conic approximations for $\COP_n$.
\begin{align}
\CK_n^{(r)}&=\Big\{M\in \CS^n: \Big(\sum_{i=1}^nx_i\Big)^rx^TMx = \sum_{\substack{\beta\in \mathbb{N}^n:|\beta|\leq r+2 \\ |\beta|\equiv r (\text{mod } 2)}} x^{\beta}\sigma_{\beta},\,
  \sigma_{\beta} \in \SOS{n,r+2-|\beta|}\Big\}, \label{cone-K}
\\
\CQ_n^{(r)}&=\Big\{M\in \CS^n: \Big(\sum_{i=1}^nx_i\Big)^rx^TMx= \sum_{\substack{\beta\in \mathbb{N}^n \\ |\beta|=r,r+2}} x^{\beta}\sigma_{\beta},\,
\sigma_{\beta} \in \SOS{n,r+2-|\beta|}\Big\}. \label{eqQr}
\end{align}
The cones $\CK_n^{(r)}$ were introduced by \citet{Parrilo_Thesis} for $r=0$ and $r=1$ and later generalized by \citet{deKlerk_Pasechnik_2002}. They were originally defined as the set of symmetric matrices \(M\) for which the polynomial $(\sum_{i=1}^nx_i^2)^r(\sum_{i,j}M_{ij}x_i^2x_j^2)$ is an sos. Subsequently,  \citet{Pena_Vera_Zuluaga_2007} showed that this definition is equivalent to the one  provided above in (\ref{cone-K}). 

Additionally, the cones $\CQ_n^{(r)}$ were introduced by \citet{Pena_Vera_Zuluaga_2007} as a restrictive version of the cones $\CK_n^{(r)}$, where only sums of squares of degree 0 and 2 are involved in the representation.
These cones capture the interior of the $\COP_n$. That is, we have
\begin{equation}\label{eq:cones}
\text{int}(\COP_n)\subseteq \bigcup \CQ_n^{(r)} \subseteq \bigcup \CK_n^{(r)} \subseteq \COP_n.
\end{equation}

\subsection{Copositive-based sos-approximations to $\alpha(G)$}
By replacing $\COP_n$ with the cones $\CK_n^{(r)}$ (resp. $\CQ_n^{(r)}$) in problem (\ref{alpha-cop}), a sequence of optimization problems approximating $\alpha(G)$ is obtained,
\begin{align}
\label{eq:defTheta} \vartheta^{(r)}(G)= \min \{t\in \R : t(A_G+I)-J\in \CK_n^{(r)}\}, \\
\label{eq:defNu} \nu^{(r)}(G)= \min \{t\in \R : t(A_G+I)-J\in \CQ_n^{(r)}\}.
\end{align}
 These hierarchies provide strong approximations for $\alpha(G)$ even for $r=0$. In fact, we have that $\nu^{(0)}(G)$ coincides with the bound $\vartheta'(G)$; a  strengthening of the well-known Lovász-Theta number, proposed by \citet{S79}. From~\eqref{eq:cones}, the sequences $\nu^{(r)}(G)$ and $\vartheta^{(r)}(G)$ are both monotonically decreasing and converge asymptotically to $\alpha(G)$. Analyzing the convergence behaviour of this sequence of bounds further is very challenging. The primary difficulties arise from the lack of basic structural properties. For instance, the finite convergence of these sequences has garnered significant interest. De Klerk and Pasechnik conjectured that the hierarchy $\vartheta^{(r)}(G)$ converges in $\alpha(G)-1$ steps.
\begin{conjecture}[\citet{deKlerk_Pasechnik_2002}]\label{conj1}
For every graph $G$, 
\[\vartheta^{(\alpha(G)-1)}(G)=\alpha(G).\]
\end{conjecture}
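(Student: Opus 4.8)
The plan is to establish the nontrivial inequality $\vartheta^{(\alpha-1)}(G) \le \alpha(G)$ by producing an explicit membership certificate; the reverse inequality $\vartheta^{(\alpha-1)}(G) \ge \alpha(G)$ is immediate, since $\CK_n^{(r)} \subseteq \COP_n$ forces any feasible $t$ in \eqref{eq:defTheta} to satisfy $t \ge \alpha(G)$ by \eqref{alpha-cop}. Writing $\alpha := \alpha(G)$ and $M := \alpha(A_G + I) - J$, the matrix $M$ lies in $\COP_n$ by \eqref{alpha-cop}, and it suffices to show $M \in \CK_n^{(\alpha-1)}$, since this certifies that $t = \alpha$ is feasible in \eqref{eq:defTheta}. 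Using the Pe\~na--Vera--Zuluaga description \eqref{cone-K} with $r = \alpha - 1$, the entire problem reduces to producing one polynomial identity
\[
\Big(\sum_{i=1}^n x_i\Big)^{\alpha-1} x^T M x \;=\; \sum_{\substack{\beta \in \mathbb{N}^n : |\beta| \le \alpha+1 \\ |\beta| \equiv \alpha-1 \,(\mathrm{mod}\,2)}} x^{\beta}\,\sigma_{\beta}, \qquad \sigma_{\beta} \in \SOS{n,\,\alpha+1-|\beta|},
\]
so the whole content of the conjecture is the existence of the sos multipliers $\sigma_\beta$ at this prescribed degree.

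The first thing I would do is determine the zero set of the left-hand form on $\R^n_+$, since an exact sos certificate is forced to vanish wherever the form does. By the Motzkin--Straus identity, $\min_{x\in\Delta} x^T(A_G+I)x = 1/\alpha$ over the standard simplex $\Delta$, so $x^T M x \ge 0$ on $\R^n_+$ with the minimizers including the scaled indicators $\tfrac{1}{\alpha}\chi_S$ of the maximum stable sets $S$ of $G$; thus $M$ lies on the \emph{boundary} of $\COP_n$, not in its interior. This is exactly what makes the problem delicate: because $x^T M x$ is not strictly positive on the simplex, the soft P\'olya/Handelman-type positivity theorems --- which would hand us a certificate after multiplying by a high enough power of $\sum_i x_i$ --- simply do not apply. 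This is precisely why even finite convergence of $\vartheta^{(r)}(G)$ was nontrivial and was settled only recently \cite{SV24}; that result guarantees that \emph{some} finite level suffices, so the remaining content of the conjecture is the sharp value $r = \alpha - 1$, i.e.\ controlling the degree.

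The strategy I would then pursue is a double induction on the pair $(\alpha, n)$ via vertex deletion. For $v \in V$ one has $\alpha(G - v) \in \{\alpha, \alpha - 1\}$: in the essential case $\alpha(G-v) = \alpha - 1$ (every maximum stable set contains $v$) I would aim to build the degree-$(\alpha-1)$ certificate for $G$ from a degree-$(\alpha-2)$ certificate for $G - v$, spending exactly one factor of $\sum_i x_i$, while in the case $\alpha(G-v)=\alpha$ I would induct on $n$ at fixed $\alpha$. The forms for $G$ and $G-v$ differ only by terms supported on $x_v$, which is what makes such a recursion plausible, but reassembling the pieces into a single sos with the prescribed $x^\beta$-grading is the technical heart. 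The main obstacle --- and the reason the statement is still a conjecture rather than a theorem --- is precisely this degree control: the certificate must simultaneously encode all the combinatorial zeros $\tfrac{1}{\alpha}\chi_S$, whose number and geometry can be arbitrarily rich, and there is no a priori structural reason that the residual positive part factors through a power of $\sum_i x_i$ as low as $\alpha - 1$. Consistent with this difficulty, the present paper does not resolve the conjecture but instead exhibits graph families forcing $\sim \alpha - 1$ levels for a closely related hierarchy, evidence that $\alpha - 1$ is the best one could hope for.
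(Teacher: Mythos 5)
You were asked to prove \cref{conj1}, but note that the paper itself offers no proof: this is the de Klerk--Pasechnik conjecture, stated as an open problem, with only partial results cited (validity for $\alpha(G)\le 8$ by \citet{GL07}, and finite convergence of $\vartheta^{(r)}(G)$ with \emph{no} degree bound by \citet{SV24}). Your proposal, to its credit, recognizes this and does not claim a complete argument. What you do assert is correct: the inequality $\vartheta^{(\alpha-1)}(G)\ge\alpha(G)$ follows from $\CK_n^{(\alpha-1)}\subseteq\COP_n$ together with \eqref{alpha-cop}; the conjecture is equivalent to the single membership $\alpha(A_G+I)-J\in\CK_n^{(\alpha-1)}$ via \eqref{cone-K} and \eqref{eq:defTheta}; and your description of the obstruction is accurate --- by Motzkin--Straus the form $x^TM_Gx$ vanishes at the scaled indicators of maximum stable sets (cf.\ \cref{minimizers-LV}), so $M_G$ lies on the boundary of $\COP_n$ and P\'olya-type positivity theorems do not apply.

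The genuine gap is that your inductive step is never constructed, and it is worth naming precisely why it cannot be treated as a routine technicality: the recursion you sketch is essentially the known Gvozdenovi\'c--Laurent attack, which the paper records as identity \eqref{equation-1} and \cref{prop:GL07}. Two points of difference matter. First, the working version of the recursion does not delete a single vertex $v$ but passes to $G_i=(G\setminus i^\perp)\oplus K_{i^\perp}$, deleting the whole closed neighborhood and re-attaching a clique, precisely to keep the stability number and the degree bookkeeping aligned; this is what yields the conjecture for $\alpha(G)\le 8$. Second, the recursion stalls in general for the reasons the paper discusses at length: if $i$ is isolated then $G_i=G$ and the step gives no information, and the natural repair via \eqref{M_G-isolated} requires the approximating cones to be closed under borderings and diagonal scalings, which fails badly --- \citet{LV21b} show $\begin{pmatrix} M & 0 \cr 0 & 0\end{pmatrix}\notin\CQ_{n+1}^{(r)}$ for any $r$ when $M\notin\CQ_n^{(0)}$, and \citet{Dickinson_Duer_Gijben_Hildebrand_2013} rule out scaling closure. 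Your variant with $G-v$ runs into the same bordering-closure issue when lifting a certificate in $n-1$ variables back to $n$ variables. So your plan rediscovers the standard route and its known obstruction; closing that gap \emph{is} the open problem, and the paper's actual contribution is to sidestep it by introducing the cones $\tilde{\CQ}_n^{(r)}$, which do enjoy the missing structural properties, rather than to resolve \cref{conj1} --- indeed, the graph families in \cref{section-examples} are offered as evidence that if the conjecture holds it is essentially tight, exactly as you note.
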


This conjecture has proven very challenging, with only partial results. Recently, \citet{SV24} showed that finite convergence holds. However, no bound on the number of steps required for this convergence has been established. \cref{conj1} has been proven for graphs with $\alpha(G) \leq 8$ \cite{GL07}. Moreover, this result also applies to the hierarchy $\nu^{(r)}(G)$, meaning that $\nu^{(\alpha(G)-1)}(G)=\alpha(G)$ for graphs with $\alpha(G) \leq 8$.
 Whether this result holds for  $\alpha(G)\geq 9$ remains an open question. It even remains open whether the hierarchy $\nu^{(r)}(G)$ has finite convergence.
 
In the following sections,  we limit the scope of our study to the cones $\CQ_n^{(r)}$. While many of the results presented here can be extended to the cones $\CK_n^{(r)}$, for the sake of clarity, we will focus exclusively on the cones $\CQ_n^{(r)}$.

We denote by $\nrank(G)$ the number of steps required for the hierarchy $\nu^{(r)}(G)$ to converge to $\alpha(G)$. 
\begin{align}
\nrank(G)=\min\{ r\in \mathbb{N}: \nu^{(r)}(G)=\alpha(G)\}
\end{align}
Therefore, we have
\[
r \ge \nrank(G) \Longleftrightarrow \alpha(G)(A_G+I) - J \in \CQ_n^{(r)}.
\]

\subsection{Lack of structural properties of $\CQ_n^{(r)}$}
When studying the $\nrank$, isolated vertices play an important role~\cite{GL07,Pena_Vera_Zuluaga_2007}.  For any graph $G$, let $G^\bullet$ be the graph created by adding an isolated vertex to $G$.
We have the following result.
\begin{theorem}[\citet{LV21b}]\label{thm:isolated}
Let $a \ge 0$ be such that  $\nrank(G^\bullet) \leq \nrank(G) +a$ for all graphs~$G$. Then,  $\nrank(G)\leq (a+1)\alpha(G)-1$ for all graphs $G$.
\end{theorem}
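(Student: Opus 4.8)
The plan is to prove the bound by induction on the stability number $\alpha(G)$, spending the hypothesis once per unit of $\alpha$ and paying one extra level for a neighbourhood-deletion step. For the base case $\alpha(G)=1$ the graph $G$ is complete, so $A_G+I=J$ and $\alpha(G)(A_G+I)-J=J-J=0\in\CQ_n^{(0)}$ (take all $\sigma_\beta=0$); hence $\nrank(G)=0\le a=(a+1)\cdot 1-1$.

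For the inductive step, let $\alpha(G)=\alpha\ge 2$ and fix a vertex $v$ lying in some maximum independent set $S$. Put $W:=V\setminus N[v]$ and $H:=G-N[v]=G[W]$. Since $S\setminus\{v\}$ is independent in $H$, while any independent set of $H$ extends by $v$ to an independent set of $G$ (the vertices of $W$ are non-adjacent to $v$), I get $\alpha(H)=\alpha-1$. Deleting only the open neighbourhood, $G-N(v)$ leaves $v$ isolated and equals $H^\bullet$, so $\alpha(G-N(v))=\alpha(H)+1=\alpha$. The crucial ingredient is then the \emph{Key Lemma}: for such $v$, $\nrank(G)\le \nrank(G-N(v))+1$. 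Granting it and combining the isolated-vertex hypothesis applied to $H$ with the induction hypothesis (valid since $\alpha(H)=\alpha-1<\alpha$) yields
\[
\nrank(G)\le \nrank(H^\bullet)+1\le \big(\nrank(H)+a\big)+1\le \big((a+1)(\alpha-1)-1\big)+a+1=(a+1)\alpha-1.
\]
Note that any vertex of any maximum independent set qualifies, so the reduction applies to every graph with $\alpha\ge 2$; for instance on $C_5$ it reduces to $\nrank(C_5)\le \nrank(K_2^\bullet)+1=0+1$, which is exactly tight.

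The main obstacle is proving the Key Lemma, i.e.\ lifting a level-$r$ certificate of $M':=\alpha(A_{G-N(v)}+I)-J$ in the variables indexed by $\{v\}\cup W$ to a level-$(r+1)$ certificate of $M:=\alpha(A_G+I)-J$ in all variables (now also those $x_u$ with $u\in N(v)$). I would start from a representation $\big(\sum_{i\in\{v\}\cup W}x_i\big)^r\,x^{T}M'x=\sum_\beta x^\beta\sigma_\beta$, multiply through by the single extra factor $\sum_{i\in V}x_i$ — which raises the first part to the admissible exponents $|\beta|\in\{r+1,r+3\}$ without changing the $\sigma_\beta$ — and then add back the ``neighbour part'' of $x^{T}Mx$, namely all monomials involving some $x_u$ with $u\in N(v)$. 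The delicate point is that this correction is \emph{not} nonnegative on $\R^n_+$ on its own; the proof must exploit that every $u\in N(v)$ is adjacent to $v$ (so the quadratic form carries the conflict products $x_vx_u$) together with the fact that $\alpha(G[N(v)])\le\alpha$, and show that after multiplication by the one extra $\sum_i x_i$ the entire correction decomposes into admissible pieces $x^\beta\sigma_\beta$ with $|\beta|\in\{r+1,r+3\}$ and $\sigma_\beta$ sums of squares of the prescribed degrees. Controlling this neighbour term — where essentially all the combinatorial content of the lemma is concentrated — is the step I expect to be hardest, precisely because the cones $\CQ_n^{(r)}$ lack the clean monotonicity one would like.
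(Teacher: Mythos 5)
Your induction skeleton (base case $\alpha=1$, one application of the isolated-vertex hypothesis per unit of $\alpha$, one extra level for a neighbourhood deletion, and the closing arithmetic, which is correct) has exactly the right shape, but the proof collapses at the step you yourself flag as hardest: the Key Lemma $\nrank(G)\le \nrank(G-N(v))+1$ for a \emph{single} vertex $v$ in a maximum stable set is not merely delicate --- it is false. Take the graph $B$ of \cref{fig.graphB}: the $8$-cycle $1\text{--}2\text{--}\cdots\text{--}8\text{--}1$ with chords $\{2,4\}$ and $\{3,7\}$, which has $\alpha(B)=3$ and $\nrank(B)=2$ \cite{LV21b}. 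The vertex $v=6$ lies in the maximum stable set $\{1,4,6\}$, yet $B-N(6)=B\setminus\{5,7\}$ consists of the triangle $\{2,3,4\}$, the path $8\text{--}1\text{--}2$ attached at $2$, and the isolated node $6$; this graph is chordal, hence perfect, so $\overline{\chi}=\alpha=3$ and $\nrank(B-N(6))=0$. Your Key Lemma would then give $\nrank(B)\le 1$, a contradiction. The structural reason it fails is visible in the identity \eqref{equation-1} underlying \cref{prop:GL07}: the negative coefficients of $\left(\sum_{i}x_i\right)x^TM_Gx$ are distributed among \emph{all} the forms $x_i\,x^TM_{G_i}x$ simultaneously, so one must certify every $G_i=(G\setminus i^\perp)\oplus K_{i^\perp}$, including the ``bad'' vertices that a clever choice of $v$ would like to skip --- in $B$, for instance, $B\setminus 1^\perp$ is a $5$-cycle, and it is such vertices that force $\nrank(B)=2$. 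The rigidity of $\CQ^{(r)}$-certificates exploited in \cref{theo-nrank} shows this is not an artifact: single-vertex reductions of the kind you propose provably cannot exist in general, and your suggested repair (multiplying by one extra $\sum_i x_i$ and ``adding back'' the neighbour part) founders precisely on the negative cross terms between $N(v)$ and its non-neighbours.

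For comparison, the actual proof (from \cite{LV21b}; the present paper only cites it) runs your induction on $\alpha(G)$ but through \cref{prop:GL07}: $\nrank(G)\le 1+\max_{i\in V}\nrank(G_i)$. Since $G_i$ is obtained from $(G\setminus i^\perp)^\bullet$ by blowing the isolated node up into the clique $K_{i^\perp}$, and replacing a node by a clique of duplicates does not increase the rank \cite{GL07}, one gets $\nrank(G_i)\le\nrank\left((G\setminus i^\perp)^\bullet\right)\le\nrank(G\setminus i^\perp)+a$ by the hypothesis. As $\alpha(G\setminus i^\perp)\le\alpha(G)-1$ for \emph{every} vertex $i$ (isolated vertices included, where $(G\setminus i)^\bullet\cong G$ and the hypothesis applies directly), the induction hypothesis yields $\nrank(G)\le 1+a+\left((a+1)(\alpha(G)-1)-1\right)=(a+1)\alpha(G)-1$. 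So the fix is to keep your outer structure but replace the false single-vertex Key Lemma by the all-vertices recursion of \cref{prop:GL07} together with the clique-substitution step.
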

Theorem~\ref{thm:isolated} motivates studying how $\nrank$ behaves when adding an isolated node. Initially, it was conjectured by~\citet{GL07} that  $\nrank(G^\bullet) \le \nrank(G)$ for all $G$, which would imply \( \nrank(G) = \alpha(G) - 1 \). However, ~\cite {LV21b} shows this conjecture to be false for the graph obtained by adding eight isolated nodes to the 5-cycle.

 Let a graph $G$ be given. Let $\alpha := \alpha(G)$. We have then $\alpha(G^\bullet) = \alpha+1$.  A first attempt for analyzing $\nrank(G^\bullet)$  is by considering the following identity proposed in \cite{GL07},
\begin{align}
\alpha(G^\bullet)(A_{G^\bullet}+I) - J
\nonumber
&=
(\alpha+1)\begin{pmatrix} 1  & 0 \cr 0 & (I+A_{G})\end{pmatrix} - \begin{pmatrix} 1  & e^T \cr e & J\end{pmatrix}\\
\label{M_G-isolated}
&\qquad=
\begin{pmatrix} \alpha  & -e^T \cr -e & {1\over \alpha}J\end{pmatrix}
+
{\alpha+1\over \alpha}
\begin{pmatrix} 0 & 0 \cr 0 & \alpha(I+A_{G})-J\end{pmatrix}.
\end{align}
The matrix $\begin{pmatrix} \alpha  & -e^T \cr -e & {1\over \alpha}J\end{pmatrix}$ is positive semidefinite and $\alpha(I+A_{G})-J \in  \CQ_n^{(\nrank(G))}$. Thus
the identity~\eqref{M_G-isolated} seems the natural step to obtain bounds on $\nrank(G^\bullet)$.  It is straightforward to show that adding new rows and columns of zeros to a copositive matrix results in another copositive matrix. Therefore, it is natural to anticipate that approximations to the copositive cone are also closed under this bordering operation. However, closure under borderings fails strongly for the cones $\CQ_n^{(r)}$. Specifically, 
\citet{LV21b} show that
$\begin{pmatrix}
M & 0\\
0 & 0
\end{pmatrix} 
 \notin \CQ_{n+1}^{(r)},
$ for any $r\ge 0$ whenever $M\notin \CQ_{n}^{(0)}$, which implies that the second term on the right-hand side of~\eqref{M_G-isolated} does not belong to any cone $\CQ_n^{(r)}$ unless $\nrank(G) = 0$.

Another important property of the copositive cone is closure under diagonal scalings, which involves multiplying by a non-negative diagonal matrix on both the left and right sides.
\citet{Dickinson_Duer_Gijben_Hildebrand_2013} showed that every $5\times 5$ copositive matrix with an all-ones diagonal belongs to the cone $\CQ_5^{(1)}$. This result translates into a copositivity test when $n=5$, as determining whether a matrix $M$ is copositive reduces to checking whether the scaled version of $M$ (with a $0-1$ diagonal) bleongs to $\CQ_5^{(1)}$. Additionally, this implies that if $\CQ_5^{(1)}$ is closed under scalings then $\CQ_5^{(1)}$ equals $\COP_5 $.  
However, $\CQ_5^{(r)}$ is not closed under diagonal scalings for $r > 0$.  To see this, consider the \emph{Horn matrix}
\begin{align}\label{horn}H = {\tiny
{\left(\begin{array}{rrrrr}
 1 & 1 & -1 & -1 & 1\cr
1 & 1 & 1 & -1 & -1\cr
-1 & 1 & 1 & 1 & -1\cr
-1 & -1 & 1 & 1 & 1 \cr
1 & -1 & -1 & 1 & 1
 \end{array}\right)}},
\end{align}
which  belongs to $\CQ_5^{(1)} \setminus \CQ_5^{(0)}$ \cite{Parrilo_Thesis}. \citet{Dickinson_Duer_Gijben_Hildebrand_2013} showed that for any $r>0$ there exists a diagonal scaling of $H$ that do not belong to the cone $\CQ_5^{(r)}$. 
More generally, for $n\geq 5$ and $r\ge 1$, the cone $\CQ_n^{(r)}$ is not closed under scalings \cite{Dickinson_Duer_Gijben_Hildebrand_2013}.

At this point, it is natural to ask whether the copositivity test for $n=5$ based on scaling can used for any larger $n$.  \citet{LV21b} answers this question negatively for $n \ge 7$. They show that whenever $M\notin \CQ_{n}^{(0)}$, then  ${\tiny\begin{pmatrix} 
M & 0 & 0\\
0 & 1 & -1 \\
0 &-1 & 1
\end{pmatrix}}
\notin \CQ_{n+2}^{(r)}$ for any $r$. For instance, using the Horn matrix, when $n\geq 7$,  we can construct copositive matrices of size $n\times n$ with an all-ones diagonal which do not belong to any cone $\CQ_n^{(r)}$. 

To summarize, the ill behaviour of the cones $\CQ_n^{(r)}$ hinders the natural attempts to prove the desired convergence properties of the sequence $\nu^{(r)}(G)$, as basic recursive equations cannot be used due to these cones falling to be closed under bordering and diagonal scalings.

\subsection{Contribution} In this paper, we introduce a hierarchy of cones, denoted as \(\tilde{\CQ}^{(r)}_n\), which is an inner approximation to the copositive cone (see definition \eqref{def:newQ}). These are closed and pointed cones with nonempty interior, closed under borderings, scalings, permutations, and principal submatrices (see \cref{sec:StructureTilde}). It is important to note that, although the cones in this hierarchy are not convex (see \cref{ex:nonConv}), the condition \(M \in \tilde{\CQ}^{(r)}_n\) is a linear matrix inequality (LMI) for any fixed \(r\). These cones satisfy the inclusion relation:
\begin{equation}\label{eq:tildeSubset}
\CQ_n^{(r)} \subseteq \tilde{\CQ}_n^{(r)} \subseteq \COP_n. 
\end{equation}
In particular, the cone \(\tilde{\CQ}_n^{(r)}\) includes any matrix that can be derived from the cone \(\CQ_n^{(r)}\) through diagonal scalings and borderings. In turn, this implies \(\COP_5 = \tilde{\CQ}_5^{(1)}\) (see \cref{COP5-captured}). Consequently, the decision problem \(M \in \COP_5\) is equivalent to an LMI (c.f. \cite{specShadows2024}). 

Considering the cones \(\tilde{\CQ}_n^{(r)}\), we formulate a hierarchy of bounds on the independence number, analogous to \eqref{eq:defNu}: \[ \tilde{\nu}^{(r)}(G) = \min \{t \in \mathbb{R} : t(A_G + I) - J \in \tilde{\CQ}_n^{(r)}\}. \] From the established  inclusion relation \eqref{eq:tildeSubset}, we obtain that \(\alpha(G) \leq \tilde{\nu}^{(r)}(G) \leq \nu^{(r)}(G)\). Thus, it is evident that \(\tilde{\nu}^{(r)}(G)\) converges to \(\alpha(G)\) at least as rapidly as \(\nu^{(r)}(G)\).

Our findings reveal that the hierarchy of bounds $\tilde{\nu}^{(r)}(G)$ is significantly stronger than the hierarchy $\nu^{(r)}(G)$. 
In \cref{sec:nuTilde}, we show finite convergence of the hierarchy  $\tilde{\nu}^{(r)}(G)$; we give bounds on the number of steps necessary for this convergence based on graph structures. In particular, we show that in general upper $r = \left(\tfrac{|G|}{|\alpha(G)} + 1 \right)^{\alpha(G)}$ steps are enough.
Also, in \cref{sec-separation}, we provide multiple classes of graphs for which $\tilde{\nu}^{(1)}(G)=\alpha(G)$. In stark contrast, using the results from \cref{section-examples}, we show that the number of steps required by the hierarchy  $\nu^{(r)}(G)$ to convergence to $\alpha(G)$ is at least $\alpha(G)-2$ for the same graphs. It is important to remark that for these graphs, $\alpha(G)$ is linear on the number of nodes. Some of these graphs are also hard instances for the Lovász-Schrijver hierarchy, also needing a factor of $n$ (i.e. a factor of $\alpha(G)$)  levels to obtain that optimal solution~\cite{LeventExample}.

\section{Preliminaries}
We will abuse the notation and use $\N$ to denote the set of natural numbers including 0. For $\alpha \in \N^n$, we define $|\alpha| := \sum_{i=1}^n \alpha_i$, and $x^\alpha = x_1^{\alpha_1}\ldots x_n^{\alpha_n}$.  We set $\N_r^{n}=\{\alpha\in \N^n: |\alpha|=r\}$. 
Given a polynomial $p = \sum_{\alpha\in \mathbb{\mathbb{N}}_d^n} p_{\alpha}x^\alpha$ of degree at most $d$, we define $\|p\|_1 = \sum_{\alpha\in \N_d^{n}}|p_\alpha|$.
We define the cone
\[
\CN_{n,r}=\Big\{ \sum_{\alpha\in \mathbb{N}_r^n}c_\alpha x^\alpha, c_\alpha\geq 0\Big\}
\]
of homogeneous polynomials of degree $r$ in $n$ variables with nonnegative coefficients.  We also define the following cone of homogeneous polynomials.
\begin{align}
\CH_{n,r} = \Big\{ \sum_{\substack{\beta\in \mathbb{N}^n\\|\beta|=r,r+2} }x^{\beta}\sigma_{\beta},\,
\sigma_{\beta} \in \SOS{n,r+2-|\beta|} \Big \}.
\end{align}
The cones $\CN_{n,r}$ and $\CH_{n,r}$ 
 are proper cones of polynomials non-negative on the non-negative orthant~\cite{Hongbo_13} .
 To simplify the exposition, we introduce the following notation.
\begin{definition}\label{def:orders}
 Let $p,q\in \mathbb{R}[x_1, \dots, x_n]$. We define the following partial orders:
\begin{description}
\item[i)] $p\geq_c q$ if $p-q\in \CN_{n,r}$, for some $r\in \mathbb{N}$.
\item[ii)] $p \sqsupseteq q$ if $p-q\in \CH_{n,r}$, for some $r\in \mathbb{N}$
\end{description}
\end{definition}
The properties stated next follow directly from \cref{def:orders} and will be used in the rest of the paper without further reference.
\begin{lemma}\label{lemma-ineq-basic} Let $p,q \in \R[x_1, \dots, x_n]$.
\begin{description} 
\item [i)] If $p\geq_c q$, then $p\sqsupseteq q$.
\item [ii)] If $p\sqsupseteq q$ and $s\geq_c 0$, then $ps \sqsupseteq qs$
\item [iii)] If $p\sqsupseteq 0$ then , then $p(x) \ge 0$ for all $x \in\mathbb{R}^n_+$.
\end{description}
\end{lemma}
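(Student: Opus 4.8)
The plan is to verify each of the three statements directly from the definitions of the two partial orders and the basic closure properties of the cones $\CN_{n,r}$ and $\CH_{n,r}$. These are elementary consequences of \cref{def:orders}, so I would not expect any real difficulty; the proof is essentially a matter of unwinding the definitions and invoking that $\CN_{n,r}\subseteq\CH_{n,r}$ together with the multiplicative stability of $\CH$ under nonnegative-coefficient polynomials.

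For part i), I would observe that $p\geq_c q$ means $p-q\in\CN_{n,r}$ for some $r$, i.e.\ $p-q=\sum_{\alpha\in\N_r^n}c_\alpha x^\alpha$ with $c_\alpha\ge 0$. I would then show $\CN_{n,r}\subseteq\CH_{n,r}$: a monomial $c_\alpha x^\alpha$ with $|\alpha|=r$ can be written in the form $x^\beta\sigma_\beta$ by taking $\beta=\alpha$ (so $|\beta|=r$) and $\sigma_\beta=c_\alpha\in\SOS{n,r+2-r}=\SOS{n,2}$, which holds since a nonnegative constant is a sum of squares (of degree $0$, hence also of degree at most $2$). Summing over $\alpha$ exhibits $p-q\in\CH_{n,r}$, so $p\sqsupseteq q$.

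For part ii), I would write $p-q\in\CH_{n,r}$ as $\sum_{|\beta|=r,r+2}x^\beta\sigma_\beta$ with $\sigma_\beta\in\SOS{n,r+2-|\beta|}$, and $s\in\CN_{n,s'}$ as $\sum_{|\gamma|=s'}d_\gamma x^\gamma$ with $d_\gamma\ge 0$. Then $(p-q)s=\sum_{\beta,\gamma}d_\gamma x^{\beta+\gamma}\sigma_\beta$. The key point is that each term $d_\gamma x^{\beta+\gamma}\sigma_\beta$ fits the defining form of $\CH_{n,r+s'}$: the exponent $|\beta+\gamma|$ equals either $r+s'$ or $r+2+s'=(r+s')+2$, and the multiplier $d_\gamma\sigma_\beta$ is a sum of squares of the correct degree $(r+s')+2-|\beta+\gamma|$ since $d_\gamma\ge 0$. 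Grouping the terms by the value of $|\beta+\gamma|$ and summing the sums of squares (which stay sums of squares) yields $(p-q)s\in\CH_{n,r+s'}$, i.e.\ $ps\sqsupseteq qs$.

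For part iii), $p\sqsupseteq 0$ means $p\in\CH_{n,r}$, so $p=\sum_{|\beta|=r,r+2}x^\beta\sigma_\beta$. Evaluating at any $x\in\R^n_+$, each monomial $x^\beta\ge 0$ and each $\sigma_\beta(x)\ge 0$ (being a sum of squares), so the whole sum is nonnegative; this is exactly the statement that $\CH_{n,r}$ consists of polynomials nonnegative on $\R^n_+$, already noted in the text. The main obstacle, if any, is purely bookkeeping: I must make sure the degree parameters $r+2-|\beta|$ and $(r+s')+2-|\beta+\gamma|$ match the index in $\SOS{n,\cdot}$ after multiplication, and that the two admissible exponent levels $r$ and $r+2$ are correctly tracked through the shift by $\gamma$. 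No structural difficulty arises beyond this indexing care.
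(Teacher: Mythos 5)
Your proposal is correct and is exactly the routine verification the paper has in mind: the paper states that \cref{lemma-ineq-basic} ``follows directly from \cref{def:orders}'' and omits the proof, and your argument supplies precisely that direct unwinding, including the two nontrivial bookkeeping points (the embedding $\CN_{n,r}\subseteq\CH_{n,r}$ via constant sums of squares, and the degree match $(r+s')+2-|\beta+\gamma|=r+2-|\beta|$ when multiplying by a nonnegative-coefficient homogeneous $s$). Nothing is missing; in part ii) your grouping step is safe because $|\gamma|=s'$ is fixed, so exponents coming from the levels $|\beta|=r$ and $|\beta|=r+2$ cannot collide.
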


Notice that the definition of the cones $\CQ_{n}^{(r)}$ reads
\begin{align*}
\CQ_n^{(r)}&=\{M\in \CS^n: \Big(\textstyle \sum_{i=1}^nx_i\Big)^r\cdot x^TMx\in \CH_{n,r}\}.
\end{align*}

Now, we introduce a new inner approximation to the copositive cone. Let
\begin{equation}\label{def:newQ}
\tilde{\CQ}_n^{(r)} =\{M\in \CS^n:  p\cdot x^TMx\in \CH_{n,r} \text{ for some } p\in  \Rplus{n,r}, \ \|p\|_1=1\}.
\end{equation}
The cones $\tilde{\CQ}_n^{(r)}$ can be seen as a richer version of the cones $\CQ_n^{(r)}$. Indeed, in the definition of $\tilde{\CQ}_n^{(r)}$ the pre-multiplier of $x^TMx$ is any (scaled) degree-$r$ homogeneous polynomial with nonnegative coefficients, while in the definition of $\CQ_n^{(r)}$ the pre-multiplier of $x^TMx$ is the polynomial $(\sum_{i=1}^nx_i)^r$. By construction, the cones $\CH_{n,r}$ and $\Rplus{n,r}$ contain only polynomials nonnegative on $\R^n_+$.  Hence, 
$$ \CQ_n^{(r)} \subseteq \tilde{\CQ}_n^{(r)}\subseteq \COP_n.$$

Notice that for a given matrix $M$, the query $M\in \tilde{\CQ}_n^{(r)}$ is an LMI. Indeed, the coefficients of the polynomial $p\cdot x^TMx$ are bilinear in the coefficients of $p$ and the entries of $M$, and thus the query $p\cdot x^TMx\in \CH_{n,r}$ is an LMI when $M$ is fixed. Observe that also the condition ``$p\in \Rplus{n,r}$ and $\|p\|_1=1$" is linear in the coefficients of $p$.

\subsection{The 0-rank approximations}
A natural first approach to approximate the cone $\COP_n$  is the cone  consisting of the symmetric matrices that can be written as the sum of a positive semidefinite matrix (psd) and a matrix with nonnegative entries. By definitions~\eqref{cone-K},~\eqref{eqQr}, and~\eqref{def:newQ}  it follows that this cone coincides with the cones $\CK_n^{(0)}$, $\CQ_n^{(0)}$, and  $\tilde{\CQ}_n^{(0)}$, i.e, we have
\[
\CK_n^{(0)}=\CQ_n^{(0)} = \tilde{\CQ}_n^{(0)} =\{M\in \CS^n: M=P+N \text{ for some } P\succeq 0, N\geq 0\}.
\]

Diananda showed that for $n\leq 4$, every copositive matrix can be written as a sum of a psd and a a nonnegative matrices. That is,
$$\COP_n = \tilde{\CQ}_n^{(0)} \text{ for } n\leq 4.$$
This equality does not hold for $n\geq 5$. In fact, the Horn matrix~\eqref{horn}
is copositive and does not belong to the cone $\tilde{\CQ}_5^{(0)}$ \cite{Hall_Newman_1963}.

\subsection*{Sum-of-squares approximations for the stability number}\label{prel-graphs}
The cones \(\tilde{\CQ}_n^{(0)} = \CQ_n^{(0)}\) provide strong bounds when applied to problem (\ref{alpha-cop}). It is known that the bound \(\tilde{\nu}^{(0)}(G) = \nu^{(0)}(G)\) strengthens the Lovász theta number \(\vartheta(G)\), and satisfies the inequalities: \[ \alpha(G) \leq \tilde{\nu}^{(0)}(G) \leq \overline{\chi}(G), \] where \(\overline{\chi}(G)\) represents the clique covering number of \(G\), defined as the minimum number of cliques required to cover all the vertices of \(G\)~\cite{Pena_Vera_Zuluaga_2007}. These inequalities imply that the parameter \(\tilde{\nu}^{(0)}(G)\) is equal to \(\alpha(G)\) when \(\overline{\chi}(G) = \alpha(G)\). In particular, \(\tilde{\nu}^{(0)}(G) = \alpha(G)\) when \(G\) is a perfect graph.

\subsection{Recursive bounds on the $\nrank$}
We now present some properties about the convergence of the hierarchy $\nu^{(r)}(G)$. We first introduce some notation. For a set $S\subseteq V$ of vertices,  the extended neighborhood of $S$ is the set
$$S^\perp = \{ i\in V: i\in S \text{ or } \{i,j\}\in E \text{ for some } j\in S\}.$$
For $i\in V$ we set $i^\perp =\{i\}^\perp$.  We observe that for any stable set $S \subseteq V$, we have
\[
\alpha(G \setminus S^\perp) \le \alpha(G)- |S|,
\]
and the equality hold if and only if $S$ is a stable set of $G$ contained in a stable set of size $\alpha(G)$. For a vertex $i\in V$, we consider the graph
\begin{equation}\label{def:Gi}G_i:= (G\setminus i^\perp) \oplus K_{i^\perp}
\end{equation}
defined as the disjoint union of $G\setminus i^\perp$ and the complete graph with vertex set $i^\perp$. We have $\alpha(G_i) \leq \alpha(G)$, with equality if and only if $i$ belongs to a stable set of size $\alpha(G)$.

For a graph $G$, we define the  matrix
\begin{align*}
    M_G=\alpha(G)(A_G+I)-J.
\end{align*}
In \cite{GL07}, the following inequality was shown:

\begin{equation}\label{equation-1}
 \Big(\sum_{i\in V} x_i\Big) x^T M_Gx
\geq _c \sum_{i\in V} x_i x^T M_{G_i} x.
\end{equation}

The following result is a consequence of~\eqref{equation-1}.
\begin{proposition}\cite{GL07}\label{prop:GL07}
For any graph $G= (V,E)$,
\[
\nrank(G)\leq 1 + \max _{i\in V} \ \nrank(G_i)
\]
\end{proposition}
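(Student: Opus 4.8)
The plan is to exhibit a $\CQ_n^{(1+r^*)}$-certificate for $M_G$, where I set $r^*:=\max_{i\in V}\nrank(G_i)$; if this maximum is infinite the asserted bound is vacuous, so assume $r^*<\infty$. The starting point is the equivalence $r\ge \nrank(H)\Longleftrightarrow \alpha(H)(A_H+I)-J\in\CQ_n^{(r)}$ applied to each graph $H=G_i$. Since $r^*\ge \nrank(G_i)$ for every $i$, this gives $M_{G_i}\in\CQ_n^{(r^*)}$, which, unfolding the definition of $\CQ_n^{(r^*)}$, says exactly that
\[
\Big(\tsum_{j} x_j\Big)^{r^*}\, x^TM_{G_i}x\ \in\ \CH_{n,r^*}\qquad\text{for every }i\in V.
\]

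The core of the argument is inequality \eqref{equation-1}. First I would multiply both sides of \eqref{equation-1} by the nonnegative-coefficient polynomial $(\tsum_j x_j)^{r^*}\geq_c 0$; since $\geq_c$ is preserved under multiplication by such a polynomial, the left-hand side becomes $(\tsum_j x_j)^{1+r^*}x^TM_Gx$ and I obtain
\[
\Big(\tsum_{j} x_j\Big)^{1+r^*} x^TM_Gx\ \geq_c\ \sum_{i\in V} x_i\,\Big(\tsum_{j} x_j\Big)^{r^*} x^TM_{G_i}x.
\]
Next I would argue that the right-hand side lies in $\CH_{n,r^*+1}$: for each fixed $i$ the factor $(\tsum_j x_j)^{r^*}x^TM_{G_i}x$ is in $\CH_{n,r^*}$ by the display above, so it satisfies $\sqsupseteq 0$; multiplying by the monomial $x_i\geq_c 0$ preserves $\sqsupseteq 0$ (property ii of \cref{lemma-ineq-basic}), and since the product is homogeneous of degree $(r^*+1)+2$ it must lie in $\CH_{n,r^*+1}$. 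Summing over $i$ stays inside the cone $\CH_{n,r^*+1}$.

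To finish, write $L:=(\tsum_j x_j)^{1+r^*}x^TM_Gx$ and $R:=\sum_{i\in V} x_i(\tsum_j x_j)^{r^*}x^TM_{G_i}x$. The displayed inequality says $L-R\in\CN_{n,r^*+3}$, which by property i of \cref{lemma-ineq-basic} lies in $\CH_{n,r^*+1}$ (the degrees match, $r^*+3=(r^*+1)+2$); combined with $R\in\CH_{n,r^*+1}$ and the fact that $\CH_{n,r^*+1}$ is a cone, this yields $L=(L-R)+R\in\CH_{n,r^*+1}$. By the definition of $\CQ_n^{(r^*+1)}$ this is exactly $M_G\in\CQ_n^{(r^*+1)}$, hence $\nrank(G)\le r^*+1=1+\max_{i\in V}\nrank(G_i)$.

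The one genuinely substantive ingredient is \eqref{equation-1} itself, which is already available; the rest is a degree-consistent transport of certificates through the two orders $\geq_c$ and $\sqsupseteq$. The only point that requires care is the bookkeeping ensuring that, after the two multiplications (by $(\tsum_j x_j)^{r^*}$ and by $x_i$), every term lands in the \emph{same} cone $\CH_{n,r^*+1}$ — which is precisely what properties i and ii of \cref{lemma-ineq-basic} guarantee.
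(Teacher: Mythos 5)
Your proof is correct and takes essentially the paper's route: the paper presents this proposition as a direct consequence of inequality~\eqref{equation-1} (citing \cite{GL07}), and your argument is exactly that derivation made explicit --- use the stated equivalence $r \ge \nrank(G_i) \Leftrightarrow M_{G_i}\in\CQ_n^{(r)}$ to get a common certificate level $r^*$, multiply \eqref{equation-1} by $\bigl(\tsum_j x_j\bigr)^{r^*}$, and absorb the nonnegative slack into $\CH_{n,r^*+1}$ (the same certificate-transport technique the paper uses to prove its generalization, \cref{prop:bound-rank1}, where per-vertex multipliers force a sum instead of your max). Your degree bookkeeping ($x_i\cdot\CH_{n,r^*}\subseteq\CH_{n,r^*+1}$ and $\CN_{n,r^*+3}\subseteq\CH_{n,r^*+1}$) is sound, so nothing is missing.
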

\cref{prop:GL07} is useful for bounding the $\nrank(G)$ in several cases. For example, it can be used to show that odd cycles and odd wheels have $\nrank$ at most 1. However, an important obstacle to applying this result is the presence of isolated nodes. Specifically, if the graph \(G\) contains an isolated node \(i \in V\), then the graph \(G_i\) is identical to \(G\), and as a result, the inequality provides no meaningful information.

\section{Structural properties of  $\tilde{\CQ}^{(r)}$}\label{sec:StructureTilde}
In this section, we examine the structural properties of the cones $\tilde{\CQ}_n^{(r)}$. We will observe that these cones possess richer structure than $\CQ_n^{(r)}$. Specifically, they are closed under diagonal scalings and borderings. These properties are crucial in \cref{sec:nuTilde} when we analyze the strength of the underlying approximations for the stability number.

The cones $\CH_{n,r}$ are closed under the permutation of variables. In this section, we will show that if, additionally, the matrix $M\in \CS^n$ possesses symmetry properties, then the multiplier $p$ in the definition~\eqref{def:newQ} of $\newQ$ can be assumed to have the same 
symmetry. In particular, if $M \in \newQ^{(r)}$ is highly symmetric, then the corresponding multiplier $p$ has a few different coefficients. For instance, when $r=1$, this implies that $M \in \CQ^{(1)}$. Using this fact, we show that the $\newQ^{(1)}_{10}$ is not convex (see \cref{ex:nonConv}) and obtain the only two graphs for which we know that $M_G \notin \newQ^{(1)}$ (see \cref{ex:Ico}).

\subsection{Bordering and Scalings}
\begin{lemma}\label{lemma-struc-new}
Let $M_0\in \tilde{\CQ}_n^{(r_0)}$, $M_1\in \tilde{\CQ}_n^{(r_1)}$, and $M_2\in \tilde{\CQ}_m^{(r_2)}$. Then,
\begin{itemize}
\item [i)] $\begin{pmatrix} M_1 & 0 \cr 0 & M_2 \end{pmatrix} \in \newQ_{n+m}^{(r_1+r_2)}$
\item [ii)] $M_0 + M_1 \in \newQ_n^{(r_0+r_1)}$.
\end{itemize}
\end{lemma}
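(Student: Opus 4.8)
The plan is to reduce both claims to a single graded multiplicativity property of the defining cones and then choose the obvious product multiplier in each case. The property I would first isolate is the degree-graded product rule
\[
\CN_{n,r'}\cdot \CH_{n,r}\subseteq \CH_{n,r+r'},
\]
together with the embeddings $\CH_{n,r}\subseteq \CH_{n+m,r}$ and $\CN_{n,r'}\subseteq\CN_{n+m,r'}$ obtained by regarding a polynomial in $n$ variables as one in $n+m$ variables (a sum of squares in $x$ is a sum of squares in $(x,y)$, and a monomial $x^\beta$ is a monomial in the larger variable set). The product rule is the degree-tracking refinement of \cref{lemma-ineq-basic}(ii): writing $h=\sum_{|\beta|=r,r+2}x^\beta\sigma_\beta\in\CH_{n,r}$ and $g=\sum_{|\gamma|=r'}c_\gamma x^\gamma$ with $c_\gamma\ge 0$, and collecting $hg$ according to the total degree $|\delta|$ of $x^\delta$, one checks that the only contributions to the stratum $|\delta|=r+r'$ come from $|\beta|=r$ and those to $|\delta|=r+r'+2$ from $|\beta|=r+2$; in each case the accompanying coefficient is a nonnegative combination of the $\sigma_\beta$, hence again a sum of squares of exactly the degree demanded by the membership $hg\in\CH_{n,r+r'}$.

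For part (i), let $p_1\in\CN_{n,r_1}$ and $p_2\in\CN_{m,r_2}$ be multipliers witnessing $M_1\in\tilde{\CQ}_n^{(r_1)}$ and $M_2\in\tilde{\CQ}_m^{(r_2)}$, in disjoint variable blocks $x$ and $y$, so that $p_1\cdot x^TM_1x\in\CH_{n,r_1}$ and $p_2\cdot y^TM_2y\in\CH_{m,r_2}$. I would take $p:=p_1(x)\,p_2(y)\in\CN_{n+m,r_1+r_2}$ as the multiplier for the block matrix and expand
\[
p\cdot\begin{pmatrix}x\\y\end{pmatrix}^{T}\begin{pmatrix}M_1&0\\0&M_2\end{pmatrix}\begin{pmatrix}x\\y\end{pmatrix}
= p_2\cdot\bigl(p_1\cdot x^TM_1x\bigr)+p_1\cdot\bigl(p_2\cdot y^TM_2y\bigr).
\]
By the product rule and the embeddings above, each summand lies in $\CH_{n+m,r_1+r_2}$, and this cone is closed under addition, so the block matrix lies in $\tilde{\CQ}_{n+m}^{(r_1+r_2)}$. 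The normalization is free here: since $x$ and $y$ are disjoint, $\|p\|_1=\|p_1\|_1\|p_2\|_1=1$.

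Part (ii) is formally the same argument with all polynomials in the same $n$ variables. Taking $p:=p_0p_1\in\CN_{n,r_0+r_1}$ I would expand $p\cdot x^T(M_0+M_1)x=p_1\cdot(p_0\cdot x^TM_0x)+p_0\cdot(p_1\cdot x^TM_1x)$, and again both summands lie in $\CH_{n,r_0+r_1}$ by the product rule, so $M_0+M_1\in\tilde{\CQ}_n^{(r_0+r_1)}$. The one extra point is the normalization: because $p_0$ and $p_1$ have nonnegative coefficients there is no cancellation in their product, so $\|p_0p_1\|_1=\|p_0\|_1\|p_1\|_1=1$; in any event a nonzero multiplier can always be rescaled to unit $1$-norm since $\CN_{n,r}$ and $\CH_{n,r}$ are cones.

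The difficulty I anticipate is purely one of bookkeeping rather than of ideas. The orders of \cref{def:orders} and the closure in \cref{lemma-ineq-basic} are phrased with an existentially quantified degree, while membership in $\tilde{\CQ}_n^{(r)}$ fixes the multiplier degree to exactly $r$; consequently the load-bearing step is the \emph{precise} graded product rule, i.e.\ verifying that a degree-$r$ element of $\CH$ times a degree-$r'$ nonnegative form lands in $\CH_{n,r+r'}$ with the two strata $|\delta|=r+r'$ and $|\delta|=r+r'+2$ matching the required sum-of-squares degrees, rather than merely in $\CH_{n,s}$ for some unspecified $s$. Once that is in hand, both parts follow from the same two-line expansion.
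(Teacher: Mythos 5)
Your proof is correct and takes essentially the same route as the paper's: you choose the same product multipliers $p_1(x)\,p_2(y)$ and $p_0p_1$ and perform the same two-term expansion of the quadratic form. The only difference is that you spell out the graded product rule $\CN_{n,r'}\cdot\CH_{n,r}\subseteq\CH_{n,r+r'}$ and the $1$-norm bookkeeping explicitly, details the paper subsumes into \cref{lemma-ineq-basic}(ii) together with the homogeneity of the polynomials involved.
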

\begin{proof}
Let $p_0\in \CN_{n,r_0}$, $p_1\in \CN_{n,r_1}$ and $p_2\in \CN_{m,r_2}$ with $\|p_0\|_1=1, \|p_1\|_1=1, \|p_2\|_1=1$ be such that $p_0x^TM_0x\in \CH_{n,r_0}$, $p_1x^TM_1x\in\CH_{n,r_1}$ and  $p_2x^TM_2x\in \CH_{m,r_2}$. Writing the associated quadratic form for $\begin{pmatrix} M_1 & 0 \cr 0 & M_2 \end{pmatrix}$ as
\(\begin{pmatrix}
x^T & y^T
\end{pmatrix}\begin{pmatrix} M_1 & 0 \cr 0 & M_2 \end{pmatrix} \begin{pmatrix}
     x\\
     y
\end{pmatrix} = x^TM_1x +y^TM_2y,\)

We obtain by Lemma \ref{lemma-ineq-basic},
$$p_1(x)p_2(y)(x^TM_1x +y^TM_2y) = p_2(y)p_1(x)x^TM_1x + p_1(x)p_2(y)y^TM_2y \sqsupseteq 0.$$
Since $0\neq p_1p_2\in \CN_{n+m, r_1+r_2}$, we obtain i). Similarly, we have that
$$p_0p_1x^T(M_0 + M_1)x = p_1p_0x^TM_0x + p_0p_1x^TM_1x \sqsupseteq 0,$$
showing ii), as $0\neq p_0p_1\in \CN_{n, r_1+r_2}$.
\end{proof}
\noindent Notice that by taking $M_2=0$ in \cref{lemma-struc-new}.i) we obtain that the cones $\CQ^{(r)}$ are closed under borderings.

Now we look at scalings. Observe that cones $\CN_{n,r}$ and $\CH_{n,r}$ are closed under scaling the variables by a positive number. The proof follows directly from the definitions.
\begin{lemma}\label{aux-scaling} Let $d_1, d_2, \dots, d_n$ be positive real numbers.
\begin{description}
\item[i)] $p(x_1, \dots, x_n) \in \CN_{n,r} \Longleftrightarrow p(d_1x_1, \dots, d_nx_n) \in \CN_{n,r}$
\item[ii)] $p(x_1, \dots, x_n) \in \CH_{n,r} \Longleftrightarrow p(d_1x_1, \dots, d_nx_n) \in \CH_{n,r}$
\end{description}
\end{lemma}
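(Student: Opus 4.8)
The plan is to reduce both statements to a single elementary fact: a positive diagonal substitution $x_i \mapsto d_i x_i$ acts on each monomial $x^\alpha$ by multiplication with the strictly positive scalar $d^\alpha := \prod_{i=1}^n d_i^{\alpha_i}$, and it preserves the total degree of every polynomial. Since such a scaling is invertible, with inverse the scaling by the positive numbers $d_i^{-1}$ (again of the same type), it suffices to establish the forward implication ($\Longrightarrow$) in each part; the reverse implication then follows at once by applying the forward implication to the inverse scaling.

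For part i), I would simply compute. Writing $p = \sum_{\alpha \in \N_r^n} c_\alpha x^\alpha$, the substitution gives $p(d_1 x_1, \dots, d_n x_n) = \sum_{\alpha \in \N_r^n} (c_\alpha d^\alpha)\, x^\alpha$. Because $d^\alpha > 0$ for every $\alpha$, the coefficient $c_\alpha d^\alpha$ is nonnegative if and only if $c_\alpha$ is. Hence membership in $\CN_{n,r}$ is preserved, and in fact the equivalence holds directly in both directions. This part requires no real work.

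For part ii), the key observation is that the cone $\SOS{n,d}$ is stable under positive variable scalings: if $\sigma = \sum_k q_k^2 \in \SOS{n,d}$, then $\sigma(d_1 x_1, \dots, d_n x_n) = \sum_k q_k(d_1 x_1, \dots, d_n x_n)^2$ is again a sum of squares, and since each $q_k(d_1 x_1, \dots, d_n x_n)$ has the same degree as $q_k$, it still lies in $\SOS{n,d}$. Starting from a representation $p = \sum_{|\beta| = r, r+2} x^\beta \sigma_\beta$ with $\sigma_\beta \in \SOS{n, r+2-|\beta|}$, the substitution yields $p(d_1 x_1, \dots, d_n x_n) = \sum_{|\beta| = r, r+2} x^\beta \bigl(d^\beta\, \sigma_\beta(d_1 x_1, \dots, d_n x_n)\bigr)$. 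Each new multiplier $d^\beta\,\sigma_\beta(d_1 x_1, \dots, d_n x_n)$ is an SOS polynomial of the same degree $r+2-|\beta|$, since scaling an SOS by the positive constant $d^\beta$ keeps it SOS. This is therefore a valid $\CH_{n,r}$-representation of the scaled polynomial, giving the forward inclusion.

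The argument is entirely routine, so I do not anticipate a genuine obstacle. The only points that deserve a moment's care are verifying that variable scaling preserves degrees — so that the $\SOS{n,r+2-|\beta|}$ membership of the multipliers is not lost — and handling the degree-$0$ block ($|\beta| = r+2$, where $\sigma_\beta$ is a nonnegative constant), but there $d^\beta \sigma_\beta$ is just a product of nonnegative numbers and the claim is immediate.
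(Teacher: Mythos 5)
Your proposal is correct and matches the paper's intent exactly: the paper states only that ``the proof follows directly from the definitions,'' and your argument is precisely that direct verification, written out — the monomial-by-monomial computation for $\CN_{n,r}$, the stability of $\SOS{n,d}$ under positive variable scalings for $\CH_{n,r}$, and the invertibility of the scaling to get both directions. No gaps; the points you flag (degree preservation and the constant block $|\beta|=r+2$) are handled correctly.
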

Using this simple fact, we can show that the cones $\tilde{\CQ}_{n}^{(r)}$ are closed under positive diagonal scalings.
\begin{proposition}\label{diagonal-new}
Let $D=Diag(d_1, d_2, \dots, d_n)$ be a positive diagonal matrix and let $r\in \mathbb{N}$. Then, for every symmetric matrix $M\in \CS^n$, we have
$$ M\in \tilde{\CQ}_n^{(r)} \Longleftrightarrow DMD\in \tilde{\CQ}_n^{(r)} $$
\end{proposition}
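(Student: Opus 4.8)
The plan is to exploit the two scaling-invariance facts recorded in \cref{aux-scaling}, applied through the substitution $y = Dx$. The whole argument rests on the elementary identity $x^T(DMD)x = (Dx)^T M (Dx)$, which lets me transfer a copositivity certificate for $M$ into one for $DMD$ by composing every polynomial appearing in it with the positive scaling $x \mapsto Dx$. Since the $\iff$ is symmetric in $M$ and $DMD$ (via $D^{-1}$), it suffices to prove one implication.

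First I would assume $M \in \tilde{\CQ}_n^{(r)}$ and fix a witness $p \in \CN_{n,r}$ with $\|p\|_1 = 1$ satisfying $p(x)\,x^T M x \in \CH_{n,r}$. I then define the scaled multiplier $q(x) := p(d_1 x_1, \dots, d_n x_n)$. Because each $d_i > 0$, \cref{aux-scaling}.i) gives $q \in \CN_{n,r}$, and $q \neq 0$ since $p \neq 0$ and the substitution $x \mapsto Dx$ is invertible.

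Next I would check that $q$ certifies $DMD$. Writing $f(x) := p(x)\,x^T M x \in \CH_{n,r}$, the substitution identity yields $q(x)\,x^T(DMD)x = f(d_1 x_1, \dots, d_n x_n)$, which lies in $\CH_{n,r}$ by \cref{aux-scaling}.ii). The only bookkeeping left is the normalization: $q$ need not satisfy $\|q\|_1 = 1$, but replacing $q$ by $q/\|q\|_1$ keeps it in $\CN_{n,r}$, and since $\CH_{n,r}$ is a cone the product $(q/\|q\|_1)\,x^T(DMD)x$ remains in $\CH_{n,r}$. This shows $DMD \in \tilde{\CQ}_n^{(r)}$, establishing the forward direction.

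Finally, the reverse implication comes for free: applying the forward direction to the matrix $DMD$ together with the positive diagonal matrix $D^{-1}$, and using $D^{-1}(DMD)D^{-1} = M$, gives $M \in \tilde{\CQ}_n^{(r)}$. I do not expect a genuine obstacle here — all the content is packaged in the stability of both $\CN_{n,r}$ and $\CH_{n,r}$ under positive variable scalings, which is exactly \cref{aux-scaling}. The only point requiring a moment's care is the $\|\cdot\|_1 = 1$ normalization, and this is harmless precisely because the defining condition of $\tilde{\CQ}_n^{(r)}$ is homogeneous in the multiplier $p$.
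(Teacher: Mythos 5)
Your proof is correct and follows essentially the same route as the paper's: substitute $x_i \mapsto d_i x_i$, invoke \cref{aux-scaling} for the stability of $\CN_{n,r}$ and $\CH_{n,r}$, and reduce the reverse implication to the forward one via $D^{-1}(DMD)D^{-1} = M$. Your explicit renormalization of $q$ to satisfy $\|q\|_1 = 1$ is a small point the paper leaves implicit, but it is handled correctly and changes nothing substantive.
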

\begin{proof}
We only need to show the ``only if" part since the matrix $M$ can be seen as the positive diagonal scaling $D^{-1}(DMD)D^{-1}$ of the matrix $DMD$. Assume $M\in \tilde{\CQ}_n^{(r)}$, then there exists $p\in  \CN_{n,r}$ with $\|p\|_1=1$ such that $p(x_1, \dots, x_n)x^TMx\in \CH_{n,r}$. Now by replacing $x_i\to d_ix_i$, and using Lemma \ref{aux-scaling} ii), we obtain $p(d_1x_1, \dots, d_nx_n)x^TDMDx\in \CH_{n,r}$. This shows that  $DMD\in \tilde{\CQ}_n^{(r)}$ because, by Lemma \ref{aux-scaling} i), we have $0\neq p(d_1x_1, \dots, d_nx_n)\in  \CN_{n,r}$.
\end{proof}

\begin{proposition}\label{newQclosed}
Let $n,r \in \mathbb{N}$, then $\tilde{\CQ}_n^{(r)}$ is closed.
\end{proposition}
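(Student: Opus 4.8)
The plan is to prove sequential closedness directly. I would take an arbitrary sequence $(M_k)_{k\in\N}$ in $\tilde{\CQ}_n^{(r)}$ with $M_k \to M$ in $\CS^n$ and show that the limit $M$ again lies in $\tilde{\CQ}_n^{(r)}$. By the defining condition \eqref{def:newQ}, for each $k$ there is a multiplier $p_k \in \CN_{n,r}$ with $\|p_k\|_1 = 1$ such that $p_k \cdot x^T M_k x \in \CH_{n,r}$. The entire argument hinges on the observation that the normalizing constraint $\|p_k\|_1 = 1$ confines the multipliers to a compact set, which is precisely why this constraint is built into the definition of $\tilde{\CQ}_n^{(r)}$.

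Concretely, I would identify a homogeneous polynomial $p \in \CN_{n,r}$ with its coefficient vector $(c_\alpha)_{\alpha \in \N_r^n}$ living in $\R_+^{N}$, where $N = \binom{n+r-1}{r} = |\N_r^n|$. Under this identification the set $\{p \in \CN_{n,r}: \|p\|_1 = 1\}$ corresponds to the standard simplex $\{c \in \R_+^N : \sum_\alpha c_\alpha = 1\}$, which is compact. Hence the sequence $(p_k)$ admits a subsequence $(p_{k_j})$ converging to some limit $p$ in this simplex; in particular $p \in \CN_{n,r}$ with $\|p\|_1 = 1$, so $p \neq 0$ and $p$ is a legitimate candidate multiplier.

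Next I would pass to the limit in the membership $p_{k_j} \cdot x^T M_{k_j} x \in \CH_{n,r}$. Both factors converge coefficient-wise: $M_{k_j} \to M$ entrywise forces $x^T M_{k_j} x \to x^T M x$, and $p_{k_j} \to p$ by construction. Since multiplication of polynomials is continuous in the coefficients, inside the finite-dimensional space of homogeneous polynomials of degree $r+2$ we get $p_{k_j} \cdot x^T M_{k_j} x \to p \cdot x^T M x$. The cone $\CH_{n,r}$ is proper, hence closed, so the limit $p \cdot x^T M x$ lies in $\CH_{n,r}$. Combined with $p \in \CN_{n,r}$ and $\|p\|_1 = 1$, this is exactly the condition $M \in \tilde{\CQ}_n^{(r)}$, which would complete the proof.

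The only delicate point, and the step I would expect to be the crux, is ruling out degeneration of the multipliers in the limit. Without the normalization $\|p_k\|_1 = 1$ one could not guarantee a convergent subsequence with a nonzero limit, since the $p_k$ could blow up or shrink toward $0$, and the limiting membership would carry no information. With the simplex constraint this obstacle dissolves entirely: compactness supplies the convergent subsequence, and the constraint $\|p\|_1 = 1$ is automatically preserved under the limit, so the limit multiplier is never the zero polynomial. Everything else reduces to the continuity of polynomial multiplication together with the already-known closedness of the proper cone $\CH_{n,r}$.
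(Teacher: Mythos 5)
Your proof is correct and follows essentially the same argument as the paper's: extract a convergent subsequence of multipliers from the compact simplex $\{p \in \CN_{n,r} : \|p\|_1 = 1\}$, pass to the limit using continuity of polynomial multiplication, and invoke the closedness of $\CH_{n,r}$. Your added remarks on why the normalization is indispensable correctly identify the role it plays, but the core reasoning is identical to the paper's proof.
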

\begin{proof}
Let $(M_i)_{i\in\mathbb{N}} \to M$, with $M_i\in \tilde{\CQ}_n^{(r)}$ for all $i\in \mathbb{N}$. Let $p_i \in \CN_{n,r}$ (with $\|p_i\|_1=1$) be such that $p_i\cdot x^TM_ix\in \CH_{n,r}$. Notice that the set $\{p \in  \CN_{n,r}: \|p\|_1 = 1\}$ is a compact set and thus there exists a convergent subsequence $p=\lim_{k\to \infty} p_{i_k}$, with $p \in \CN_{n,r}$ and $\|p\|_1 =1$.
As $\CH_{n,r}$ is closed, we have $p \cdot  x^TMx =  \lim_{k \to \infty} p_{i_k} \cdot x^TM_{i_k}x \in \CH_{n,r}$. Thus $M \in \tilde{\CQ}_n^{(r)}$.
\end{proof}

\begin{corollary}\label{COP5-captured}
$\COP_5=\tilde{\CQ}_5^{(1)}$.
\end{corollary}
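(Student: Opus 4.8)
The plan is to establish the two inclusions separately. The inclusion $\tilde{\CQ}_5^{(1)} \subseteq \COP_5$ is already available, since by construction $\tilde{\CQ}_n^{(r)} \subseteq \COP_n$ for all $n,r$. So the entire content lies in proving $\COP_5 \subseteq \tilde{\CQ}_5^{(1)}$. The crucial external ingredient is the result of \citet{Dickinson_Duer_Gijben_Hildebrand_2013} that every copositive $5\times 5$ matrix with all-ones diagonal lies in $\CQ_5^{(1)}$, hence (using $\CQ_5^{(1)}\subseteq\tilde{\CQ}_5^{(1)}$) in $\tilde{\CQ}_5^{(1)}$. The task is therefore to reduce an arbitrary $M\in\COP_5$ to the all-ones-diagonal case, which I would do in two stages according to whether the diagonal of $M$ is strictly positive.

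First suppose $M\in\COP_5$ has strictly positive diagonal $M_{11},\dots,M_{55}>0$. Set $D=\Diag(M_{11}^{-1/2},\dots,M_{55}^{-1/2})$, a positive diagonal matrix. Then $DMD$ is copositive (positive diagonal scalings preserve copositivity, since $x^T DMD x=(Dx)^T M (Dx)$ and $Dx\in\R^n_+$ whenever $x\in\R^n_+$) and has all-ones diagonal. By the cited result, $DMD\in\CQ_5^{(1)}\subseteq\tilde{\CQ}_5^{(1)}$, and then \cref{diagonal-new} applied with the positive diagonal matrix $D^{-1}$ yields $M=D^{-1}(DMD)D^{-1}\in\tilde{\CQ}_5^{(1)}$.

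For a general $M\in\COP_5$, the diagonal entries are nonnegative (since $M_{ii}=e_i^T M e_i\ge 0$) but some may vanish. I would remove the zero diagonal entries by perturbation: for $\varepsilon>0$ put $M_\varepsilon = M+\varepsilon\sum_{i:\,M_{ii}=0} e_i e_i^T$. Each rank-one term satisfies $x^T e_i e_i^T x=x_i^2\ge 0$, so $M_\varepsilon$ is again copositive, and it now has strictly positive diagonal. By the previous paragraph, $M_\varepsilon\in\tilde{\CQ}_5^{(1)}$ for every $\varepsilon>0$. Letting $\varepsilon\to 0$ gives $M_\varepsilon\to M$, and since $\tilde{\CQ}_5^{(1)}$ is closed by \cref{newQclosed}, we conclude $M\in\tilde{\CQ}_5^{(1)}$. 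This finishes $\COP_5\subseteq\tilde{\CQ}_5^{(1)}$, which combined with the trivial reverse inclusion proves the claim.

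The argument is mostly bookkeeping once the three ingredients are in place: the membership result of \citet{Dickinson_Duer_Gijben_Hildebrand_2013}, closure under positive diagonal scalings (\cref{diagonal-new}), and closedness of the cone (\cref{newQclosed}); I do not expect a genuine obstacle. The only subtle point is that the cited membership result applies only to the all-ones-diagonal normalization, so the scaling reduction cannot be carried out directly when some diagonal entry vanishes. The perturbation step together with closedness is precisely what bridges this gap, and it is worth emphasizing that it is the good behaviour of $\tilde{\CQ}$ under scalings --- a property that fails for $\CQ$ --- that makes the reduction go through.
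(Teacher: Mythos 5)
Your proposal is correct and takes essentially the same approach as the paper: both reduce to the all-ones-diagonal case via the result of Dickinson--D\"ur--Gijben--Hildebrand combined with closure under positive diagonal scalings (\cref{diagonal-new}), and both invoke closedness (\cref{newQclosed}) to handle matrices whose diagonal is not strictly positive. The only cosmetic difference is the limiting device: the paper approximates $M\in\COP_5$ by matrices in $\interior\COP_5$ (which automatically have positive diagonal), whereas you perturb the vanishing diagonal entries by $\varepsilon e_ie_i^T$ --- two interchangeable ways of exploiting the closedness of $\tilde{\CQ}_5^{(1)}$.
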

\begin{proof}
We know $\tilde{\CQ}_5^{(1)} \subseteq \COP_5$. Also, both cones are closed. Thus it is enough to show $\interior \COP_5 \subset \tilde{\CQ}_5^{(1)}$. Let $M \in \interior \COP_5$. All elements on the diagonal of $M$ are positive. Let $D$ be the diagonal matrix with the same diagonal as $M$. Then $\hat M:= D^{-1/2}MD^{-1/2}$ is a copositive matrix with an all-ones diagonal.
But every such matrix belongs to $\CQ_5^{(1)}$ 
\cite{Dickinson_Duer_Gijben_Hildebrand_2013}.  
Then $\hat M \in \CQ_5^{1} \subset \newQ_5^{(1)}$. But, as $\newQ_5^{(1)}$ is closed under scaling (see \cref{diagonal-new}), we have that $M\in \newQ_5^{(1)}$ too.
\end{proof}

\subsection{Symmetry}

The permutation group $\Pi_n$ act naturally on the set of $n$-variate polynomials by permuting the variables. We define $p^\sigma$, the action of $\sigma \in \Pi_n$ on $p \in \R[x]$, by   $p^\sigma(x_1,\dots,x_n) := p(x_{\sigma(1)},\dots,x_{\sigma_n})$. The group $\Pi_n$ also acts naturally on the set of $n\times n$ symmetric matrices by permuting simultaneously rows and columns. That is, $M^\sigma$ the action of $\sigma \in \Pi_n$ on $M \in \CS^n$ is defined by $M^\sigma_{ij} := M_{\sigma(i)\sigma(j)}$

We observe that if $M$ is invariant under the action of $\sigma$, i.e., $M^\sigma=M$, then the corresponding form $p_M:=x^TMx$ is invariant under the action of $\sigma$, i.e.,
\[
p_M(x_1, \dots, x_n)=p_M(x_{\sigma(1)}, x_{\sigma(2)}, \dots, x_{\sigma(n)}).
\]
We obtain the following result.
\begin{lemma}\label{lemma-sym}
Let $M\in \tilde{\CQ}_n^{(r)}$, and let  $\Pi_M=\{\sigma\in \Pi_n: M^\sigma=M\}$ be the stabilizer subgroup of  $\Pi_n$ with respect to $M$. Then, there exists $\tilde{p}\in \Rplus{n,r+2}$, with $\|\tilde{p}\|_1=1$ such that $\tilde{p}\cdot x^TMx\in  \CH_{n,r}$, and $\tilde{p}$ is invariant under the action of $\Pi_M$, i.e., 
$$\tilde{p}(x_1, \dots, x_n)=\tilde{p}(x_{\sigma(1)}, \dots, x_{\sigma(n)}), \quad \text{ for all }\sigma\in \Pi_M.$$
\end{lemma}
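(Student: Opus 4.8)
The plan is to obtain $\tilde p$ by \emph{symmetrizing} the given multiplier over the stabilizer group $\Pi_M$. Concretely, if $p\in\CN_{n,r}$ with $\|p\|_1=1$ is a multiplier witnessing $M\in\tilde{\CQ}_n^{(r)}$, i.e.\ $p\cdot x^TMx\in\CH_{n,r}$, I would set
\[
\tilde p=\frac{1}{|\Pi_M|}\sum_{\sigma\in\Pi_M}p^\sigma .
\]
By construction $\tilde p$ is $\Pi_M$-invariant: precomposing the average with any $\tau\in\Pi_M$ merely reindexes the sum over the group, so $(\tilde p)^\tau=\tilde p$.

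First I would dispatch the elementary membership and normalization properties. Each $p^\sigma$ lies in $\CN_{n,r}$ because permuting variables preserves both homogeneity of degree $r$ and nonnegativity of the coefficients; hence the convex combination $\tilde p$ also lies in $\CN_{n,r}$, and in particular $\tilde p\neq 0$. For the normalization, observe that for any polynomial $q$ with nonnegative coefficients one has $\|q\|_1=q(\vecO)$, where $\vecO$ is the all-ones vector, since both equal the sum of the coefficients. As permuting the coordinates of $\vecO$ returns $\vecO$, we get $p^\sigma(\vecO)=p(\vecO)=\|p\|_1=1$ for every $\sigma$, and averaging yields $\|\tilde p\|_1=\tilde p(\vecO)=1$.

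The crux is to show $\tilde p\cdot x^TMx\in\CH_{n,r}$, which I would reduce to the per-permutation claim that
\[
p^\sigma\cdot x^TMx\in\CH_{n,r}\qquad\text{for each }\sigma\in\Pi_M .
\]
This rests on two facts. On one hand, $\CH_{n,r}$ is invariant under the $\Pi_n$-action by permutation of variables (as recalled at the start of this section), so applying $\sigma$ to the membership $p\cdot x^TMx\in\CH_{n,r}$ gives $(p\cdot x^TMx)^\sigma\in\CH_{n,r}$. On the other hand, the action distributes over products, $(p\cdot x^TMx)^\sigma=p^\sigma\cdot(x^TMx)^\sigma$, and because $\sigma\in\Pi_M$ stabilizes $M$ we have $(x^TMx)^\sigma=x^TM^\sigma x=x^TMx$, exactly as noted just before the lemma. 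Combining the two establishes the per-permutation claim. Finally, since $\CH_{n,r}$ is a convex cone, averaging these memberships over $\sigma\in\Pi_M$ gives $\tilde p\cdot x^TMx\in\CH_{n,r}$, completing the argument.

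The only genuinely load-bearing step is this per-permutation claim, and within it the pair of observations that the action distributes over the product $p\cdot x^TMx$ and that the stabilizer condition $M^\sigma=M$ is precisely what leaves the quadratic factor unchanged while moving the permutation onto $p$. Everything else — homogeneity and nonnegativity of coefficients under permutation, the evaluation-at-$\vecO$ description of $\|\cdot\|_1$, and closure of $\CH_{n,r}$ under averaging — is routine bookkeeping once the permutation-invariance of $\CH_{n,r}$ is taken as given.
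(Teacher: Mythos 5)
Your proposal is correct and follows essentially the same route as the paper: symmetrize $p$ by averaging over $\Pi_M$, use $M^\sigma=M$ together with the permutation-invariance and convexity of $\CH_{n,r}$ to conclude $\tilde p\cdot x^TMx\in\CH_{n,r}$. Your direct verification that $\|\tilde p\|_1=1$ via evaluation at the all-ones vector is a minor refinement (the paper instead rescales by $\|\tilde p\|_1$ at the end), but the argument is otherwise identical.
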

\begin{proof}
    Since $M\in \tilde{\CQ}_n^{(r)}$, there exists $p\in  \Rplus{n,r+2}$, with  $\|p\|_1=1$ such that 
    $$p\cdot x^TMx\in \CH_{n,r}.$$
 Since $\CH_{n,r}$ is a convex cone and it is closed under permuting variables, we have that 

 \[\left(\tfrac{1}{|\Pi_M |}\sum_{\sigma\in S_M} p^\sigma(x)\right) x^TM x =\tfrac{1}{|\Pi_M |}\sum_{\sigma\in S_M} p^\sigma(x)x^TM^\sigma x = \tfrac{1}{|\Pi_M |}\sum_{\sigma\in S_M} (p(x)x^TMx)^\sigma \in \CH_{n,r},
 \]
 and thus taking  $\tilde{p}=\frac{1}{|S_M |}\sum_{\sigma\in S_M} p^\sigma$, the polynomial $\tfrac 1{\|\tilde p\|_1} \tilde p$ satisfies the conditions.
\end{proof}

\begin{corollary}\label{cor:vertex-trans}
Let $G=(V=[n],E)$ be a vertex transitive graph, then $\newNu^{(1)}(G)=\nu^{(1)}(G)$.
\end{corollary}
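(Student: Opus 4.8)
The plan is to show that for a vertex-transitive graph $G$, the extra freedom in choosing the multiplier $p$ that defines $\newNu^{(1)}(G)$ collapses back to the single choice $p = \sum_i x_i$ that defines $\nu^{(1)}(G)$, so the two bounds coincide. Since we always have $\newNu^{(1)}(G) \le \nu^{(1)}(G)$ by the inclusion $\CQ_n^{(1)} \subseteq \newQ_n^{(1)}$, it suffices to prove the reverse inequality $\nu^{(1)}(G) \le \newNu^{(1)}(G)$, which amounts to showing that if $M_G^{(t)} := t(A_G + I) - J \in \newQ_n^{(1)}$, then in fact $M_G^{(t)} \in \CQ_n^{(1)}$.

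First I would observe that for vertex-transitive $G$, the matrix $t(A_G+I)-J$ is invariant under the action of the automorphism group $\operatorname{Aut}(G)$ acting on $\{1,\dots,n\}$, since both $A_G$ (by definition of automorphism) and $I, J$ are fixed by any permutation, and $\operatorname{Aut}(G) \subseteq \Pi_M$ for $M = t(A_G+I)-J$. Next, I would invoke \cref{lemma-sym}: given $M \in \newQ_n^{(1)}$, there is a multiplier $\tilde p \in \Rplus{n,1}$ with $\|\tilde p\|_1 = 1$, invariant under the full stabilizer $\Pi_M \supseteq \operatorname{Aut}(G)$, with $\tilde p \cdot x^T M x \in \CH_{n,1}$. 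The key point is then to pin down what a degree-$1$, nonnegative-coefficient, $\operatorname{Aut}(G)$-invariant polynomial must look like: since $\tilde p = \sum_i c_i x_i$ and vertex-transitivity means $\operatorname{Aut}(G)$ acts transitively on the index set $\{1,\dots,n\}$, invariance forces all coefficients $c_i$ to be equal. Combined with $\|\tilde p\|_1 = 1$, this yields $\tilde p = \tfrac1n \sum_i x_i$, a positive scalar multiple of $\sum_i x_i$.

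Therefore $\left(\sum_i x_i\right) x^T M x = n\, \tilde p \cdot x^T M x \in \CH_{n,1}$, since $\CH_{n,1}$ is a cone closed under positive scaling, which is exactly the condition $M \in \CQ_n^{(1)}$ by the reformulation $\CQ_n^{(r)} = \{M : (\sum_i x_i)^r x^T M x \in \CH_{n,r}\}$ recorded just before \eqref{def:newQ}. Applying this with $M = t(A_G+I)-J$ and $t = \newNu^{(1)}(G)$ (using that $\newQ_n^{(1)}$ is closed, \cref{newQclosed}, so the minimum is attained) gives $\newNu^{(1)}(G) \ge \nu^{(1)}(G)$, and combining with the reverse inequality finishes the proof.

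The main obstacle I anticipate is purely a matter of care rather than depth: I must make sure the degree bookkeeping in \cref{lemma-sym} is applied correctly. Note that \cref{lemma-sym} is stated with $\tilde p \in \Rplus{n,r+2}$, whereas the definition \eqref{def:newQ} uses $p \in \Rplus{n,r}$; for $r=1$ I would double-check which degree the multiplier genuinely has, since a degree-$1$ form is what makes the ``all coefficients equal'' argument immediate, and I would confirm that the statement of the lemma is consistent with producing a linear multiplier in this case. Beyond that, the only thing to verify is that transitivity of the action on vertices (not merely on edges) is what forces equality of all coefficients of a linear invariant, which is exactly the hypothesis of vertex-transitivity.
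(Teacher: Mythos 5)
Your proof is correct and follows essentially the same route as the paper's: both apply \cref{lemma-sym} to symmetrize the multiplier under $\operatorname{Aut}(G) = \Pi_{M}$, use vertex-transitivity to force all coefficients of the invariant linear polynomial to be equal, conclude $\tilde p = \tfrac1n\sum_i x_i$, and hence $M \in \CQ_n^{(1)}$. The only cosmetic differences are that the paper argues for an arbitrary feasible $t$ (so the attainment step via \cref{newQclosed} is unnecessary), and your flag about the degree index in \cref{lemma-sym} is well taken --- the $\Rplus{n,r+2}$ there is a typo for $\Rplus{n,r}$, consistent with the linear multiplier the paper itself uses when $r=1$.
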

\begin{proof} 
As $\newNu^{(1)}(G) \leq\ \nu^{(1)}(G)$ always holds, it is enough to show $\newNu^{(1)}(G) \ge \nu^{(1)}(G)$.
Let $t$ be such that $M_t = t(A_G+I) - J \in \tilde{\CQ}^{(1)}_n$. We have that
the stabilizer subgroup $\Pi_{M_t} = Aut_G$, is the automorphism group of $G$. 
From \cref{lemma-sym}, there is $\tilde p(x) = p_1 x_1 + \cdots + p_nx_n \in \CN_{n,1}$ invariant under the action of $Aut_G$, such that $\tilde{p}\cdot x^TMx\in  \CH_{n,1}$. As $G$ is vertex-transitive, for all $i, j \in V$ there is $\sigma \in Aut_G$ such that $\sigma(i) = j$. As $\tilde p$ is invariant under $Aut_G$, we have $p_j = p_i$. Thus $\tilde p  = \frac 1n \sum_i x_i$ which implies $M_t \in \CQ^{(1)}_n$. \end{proof}

In Section \cref{section-examples} we show that for several graphs,
the corresponding matrix $M_G \in \newQ^{(1)}$. Next, we give the only examples we have found where $M_G \notin \newQ^{(1)}$.

\begin{example}\label{ex:Ico}
    Let $G$ be the icosahedron graph in Figure \ref{icosahedron} and let $G^c$ be the complement of this graph. These two graphs are vertex-transitive and have independence number 3. \citet{Dobre_Vera_2015} numerically compute that $\nu^{(0)}(G) \approx 3.2361 \approx \nu^{(1)}(G)$, they also prove that  $\nu^{(2)}(G) = 3 = \alpha(G)$. Similarly, \citet{Bomze2002} show numerically that $\nrank(G^c) > 1$, which implies $\nrank(G^c) = 2$, as \cref{conj1} holds in this case, as $\alpha(G^c) = 3$. As these two graphs are vertex-transitive, by \cref{cor:vertex-trans} we have $\newNu^{(1)}(G) = \nu^{(1)}(G)$ and $\newNu^{(1)}(G^c) = \nu^{(1)}(G^c)$. Thus, the $\nnrank$ of each graph is 2.
\end{example}

\begin{figure}[H]
\begin{tikzpicture}[
    x=.3cm,y=.3cm,
    edge/.style={line width=0.8pt}
  ]
  %% 1) Vertex positions
  \coordinate (v1)  at (-8.02,-11.06);
  \coordinate (v2)  at ( 2.46,  7.18);
  \coordinate (v3)  at (13.02,-11.02);
  \coordinate (v4)  at (-1.00, -3.00);
  \coordinate (v5)  at ( 2.46, -1.00);
  \coordinate (v6)  at ( 5.93, -3.00);
  \coordinate (v7)  at ( 5.93, -7.00);
  \coordinate (v8)  at ( 2.46, -9.00);
  \coordinate (v9)  at (-1.00, -7.00);
  \coordinate (v10) at ( 1.01, -3.99);
  \coordinate (v11) at ( 4.00, -4.00);
  \coordinate (v12) at ( 2.50, -6.59);

  %% 2) Draw all edges
  \foreach \i/\j in {
    1/2,2/3,3/1,
    4/5,5/6,6/7,7/8,8/9,9/4,
    5/2,4/2,6/2,
    4/1,1/9,1/8,8/3,7/3,6/3,
    10/11,11/12,12/10,10/5,5/11,
    10/4,10/9,12/9,12/8,7/12,7/11,11/6}
    {
    \draw[edge] (v\i) -- (v\j);
  }

  %% 3) Draw each vertex as an empty circle
  \foreach \i in {1,...,12}{
    \draw[fill=black, draw=black] (v\i) circle (3.5pt);
    %\node[white,font=\footnotesize] at (v\i) {\i};
  }
\end{tikzpicture}
\caption{Icosahedron graph}\label{icosahedron}
\end{figure}
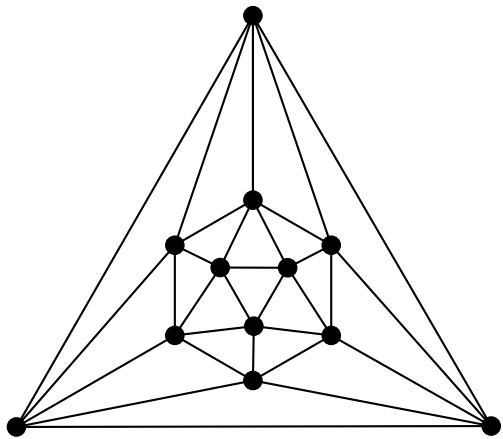

\begin{example}[No convexity]\label{ex:nonConv}
    Let $H \in \CQ_{5}^{(1)} \subset \tilde \CQ_{5}^{(1)}$ be the Horn matrix defined in (\ref{horn}). By Lemma \ref{lemma-struc-new} the matrices $\tiny \begin{pmatrix} H & 0 \cr 0 & 0 \end{pmatrix}$ and $\tiny\begin{pmatrix} 0 & 0 \cr 0 & H \end{pmatrix}$ belong to $\tilde{Q}_{10}^{(1)}$. Now we show that the matrix $H'=\begin{pmatrix} H & 0 \cr 0 & H \end{pmatrix}$ does not belong to  $\tilde{Q}_{10}^{(1)}$, which implies that $\tilde{Q}_{10}^{(1)}$ is not convex. We assume by the sake of contradiction that $H'\in \tilde{Q}_{10}^{(1)}$. Observe that $H'$ is invariant under the action the permutations $\sigma_1=(1,2,3,4,5)$, $\sigma_2=(6,7,8,9,10)$ and $\sigma_3=(1,6)(2,7)(3,8)(4,9)(5,10)$. Then, by Lemma \ref{lemma-sym}, we have that 
    $$\left(\textstyle \sum_{i=1}^{10}x_i \right)x^TH'x\in \CH_{n,r},$$
    and thus $H'\in \CQ_{10}^{(1)}$, which is a contradiction since $H'\notin \bigcup \CQ_{10}^{(r)}$ \cite{LV21b}.
\end{example}

\section{The hierarchy $\newNu^{(r)}(G)$}\label{sec:nuTilde}
Recall that the hierarchy is defined as
\begin{align*}
\tilde{\nu}^{(r)}(G)= \min \{t\in \R : t(A_G+I)-J\in \tilde{\CQ}_n^{(r)}\}.
\end{align*}

 The condition $t(A_G+I)-J\in \newQ_n^{(r)}$ is bilinear in the decision variable  $t$  (which is the objective) and in the coefficients of the polynomial $p$ used in the definition~\eqref{def:newQ} of $\newQ_n^{(r)}$. However, for fixed $t$, the query $t(A_G+I)-J\in \tilde{\CQ}_n^{(r)}$ is an LMI (and thus an SDP feasibility problem), and the answer to such query is `YES' if and only if $t \ge \tilde{\nu}^{(r)}(G)$. Then, we can approximate $\tilde{\nu}^{(r)}(G)$ via binary search in the interval $[1,n]$.

We now define the $\nnrank$ of a graph, as the analogous of the $\nrank$, i.e., the number of steps needed for the hierarchy $\tilde{\nu}^{(r)}(G)$ to converge to $\alpha(G)$.
\begin{align}
\nnrank(G)=\min\{ r\in \mathbb{N}: \tilde{\nu}^{(r)}(G)=\alpha(G)\},
\end{align}
where we define $\nnrank(G) = 0$ for the empty graph $G$.

The following lemma shows a relation between  the $\tilde{\nu}\text{-rank}$ of two graphs $G$, $H$ and its disjoint union $G \oplus H$. This shows, in particular, that adding isolated nodes to a graph does not increase its $\nnrank$.
\begin{lemma}\label{lemma-sum}
Let $G=([n],E(G))$ and $H=([m],E(H))$. Then,
$$\nnrank(G\oplus H)\leq \nnrank(G) + \nnrank(H).$$
In particular, if $H$ is perfect (e.g. a complete graph) $\nnrank(G\oplus H) \le \nnrank(G)$.
\end{lemma}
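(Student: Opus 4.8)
The plan is to prove the key inequality $\nnrank(G \oplus H) \le \nnrank(G) + \nnrank(H)$ directly from \cref{lemma-struc-new}, which establishes that the cones $\newQ^{(r)}$ are closed under taking block-diagonal direct sums with additive degree. The main conceptual point is that the stability number and the relevant defining matrix both behave additively under disjoint union of graphs: one has $\alpha(G \oplus H) = \alpha(G) + \alpha(H)$, and the adjacency matrix of $G \oplus H$ is block-diagonal, $A_{G\oplus H} = \begin{pmatrix} A_G & 0 \cr 0 & A_H \end{pmatrix}$, so that $I$ and $J$ split compatibly along the blocks as well.

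First I would set $r_1 = \nnrank(G)$ and $r_2 = \nnrank(H)$, so that by definition $M_G = \alpha(G)(A_G + I) - J_n \in \newQ_n^{(r_1)}$ and $M_H = \alpha(H)(A_H + I) - J_m \in \newQ_m^{(r_2)}$, where I write $J_n, J_m$ for the all-ones matrices of the appropriate sizes. The obstacle to a one-line argument is that the natural block-diagonal matrix $\begin{pmatrix} M_G & 0 \cr 0 & M_H \end{pmatrix}$ produced by \cref{lemma-struc-new}.i) is \emph{not} equal to $M_{G \oplus H}$: the off-diagonal blocks of $J_{n+m}$ are all-ones rather than zero, and moreover the diagonal blocks carry the wrong multiples of $A + I$ unless $\alpha(G) = \alpha(H)$. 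So the core of the proof is to account for this discrepancy and absorb it into the nonnegative part of the cone.

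The key step is therefore to write $M_{G\oplus H} = \alpha(G\oplus H)(A_{G\oplus H} + I) - J_{n+m}$ as a sum of the block-diagonal matrix coming from $M_G$ and $M_H$ (suitably scaled) plus a correction. Concretely I would verify the matrix identity
\[
\alpha(G\oplus H)(A_{G\oplus H}+I) - J_{n+m}
= \begin{pmatrix} M_G & 0 \cr 0 & M_H \end{pmatrix}
+ \begin{pmatrix} \alpha(H)(A_G + I) & -J \cr -J & \alpha(G)(A_H+I) \end{pmatrix},
\]
using $\alpha(G\oplus H) = \alpha(G) + \alpha(H)$ to split the diagonal $(A+I)$ terms and the block structure of $J_{n+m}$. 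I would then argue that the second correction matrix lies in $\newQ_{n+m}^{(0)} = \DNN{n+m}$ (the PSD-plus-nonnegative cone), since its diagonal blocks are entrywise nonnegative (being nonnegative multiples of $A+I$) and the whole matrix can be seen to be of the form $P + N$ with $P \succeq 0$ and $N \ge 0$ — here the all-ones off-diagonal blocks $-J$ are the only negative entries, and they pair with a rank-one PSD piece to certify membership. Applying \cref{lemma-struc-new}.i) to get the first summand into $\newQ_{n+m}^{(r_1+r_2)}$ and then \cref{lemma-struc-new}.ii) to add the $\newQ^{(0)}_{n+m}$ correction yields $M_{G\oplus H} \in \newQ_{n+m}^{(r_1 + r_2)}$, hence $\newNu^{(r_1+r_2)}(G\oplus H) = \alpha(G\oplus H)$ and the bound follows.

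I expect the main obstacle to be the honest verification that the correction matrix lies in $\newQ^{(0)}_{n+m} = \DNN{n+m}$; the additive splitting via the two parts of \cref{lemma-struc-new} is routine once that membership is in hand. For the final ``in particular'' claim, I would note that when $H$ is perfect one has $\nnrank(H) = 0$, since the discussion following the preliminaries gives $\newNu^{(0)}(H) = \alpha(H)$ for perfect graphs (via $\overline{\chi}(H) = \alpha(H)$), and a complete graph is a special case; substituting $\nnrank(H) = 0$ into the main inequality gives $\nnrank(G \oplus H) \le \nnrank(G)$.
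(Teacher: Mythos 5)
Your setup is exactly the paper's: your matrix identity is the first line of the paper's equation \eqref{eq:GplusH}, and the assembly via \cref{lemma-struc-new} at the end is also the paper's. But the step you yourself flag as the main obstacle is a genuine gap, and it does not just need ``honest verification'' --- it is false. The correction matrix
\[
C \;=\; \begin{pmatrix} \alpha(H)(A_G+I) & -J \\ -J & \alpha(G)(A_H+I)\end{pmatrix}
\]
is copositive, but in general it is \emph{not} in $\newQ^{(0)}_{n+m}$, i.e., not a sum of a positive semidefinite and an entrywise nonnegative matrix. The reason: any PSD summand $P$ whose off-diagonal blocks cover the $-J$'s is forced (through its kernel at the copositive zeros of $C$) to have strictly positive entries inside the diagonal blocks, whereas $C$ \emph{vanishes} at every within-block non-edge of $G$ (and of $H$), and nothing can absorb the resulting negative entries of $N = C - P$. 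Concretely, take $G = C_5$ and $H = K_1$, so $\alpha(G)=2$, $\alpha(H)=1$ and $C = \begin{pmatrix} A_{C_5}+I & -\mathbf{1} \\ -\mathbf{1}^{\mathsf T} & 2 \end{pmatrix}$. For each of the five non-edges $\{i,k\}$ of $C_5$, the vector $z = \chi^{\{i,k,6\}}$ satisfies $z \ge 0$ and $z^{\mathsf T} C z = 1+1+2+0-2-2 = 0$; if $C = P + N$ with $P \succeq 0$, $N \ge 0$, then $z^{\mathsf T} P z = z^{\mathsf T} N z = 0$, hence $Pz = 0$ (the same kernel argument as \cref{lemma-ncert} with $r=0$). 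Running this over the five non-edges forces $P = \mu\, vv^{\mathsf T}$ with $v = (1,1,1,1,1,-2)^{\mathsf T}$ and $\mu \ge 0$; but then $N = C - P$ has entry $-\mu$ at every non-edge of $C_5$, so $\mu = 0$ and $N = C$, which has negative entries --- a contradiction. So your decomposition leaves a remainder that is outside the cone you need it in, and the proof as written fails for all $G$, $H$ that are not unions of cliques.

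The repair is precisely the paper's extra manipulation in \eqref{eq:GplusH}: since $M_G = \alpha(G)(A_G+I) - J$, one has $\alpha(H)(A_G+I) = \tfrac{\alpha(H)}{\alpha(G)}\bigl(M_G + J_{n,n}\bigr)$, and symmetrically for $H$. Substituting this into your correction matrix moves the surplus into additional scaled copies of the bordered $M_G$ and $M_H$ (harmless, as $\newQ^{(r)}$ is a cone, so the factors $1+\tfrac{\alpha(H)}{\alpha(G)}$ and $1+\tfrac{\alpha(G)}{\alpha(H)}$ cost nothing) and turns the leftover into
\[
\begin{pmatrix} \tfrac{\alpha(H)}{\alpha(G)}J_{n,n} & -J_{n,m} \\ -J_{m,n} & \tfrac{\alpha(G)}{\alpha(H)}J_{m,m}\end{pmatrix} \;=\; ww^{\mathsf T}, \qquad w = \Bigl(\sqrt{\tfrac{\alpha(H)}{\alpha(G)}}\,\mathbf{1}_n;\, -\sqrt{\tfrac{\alpha(G)}{\alpha(H)}}\,\mathbf{1}_m\Bigr),
\]
which genuinely is rank-one PSD, hence in $\newQ^{(0)}_{n+m}$; your final assembly via \cref{lemma-struc-new}.i) and ii) then goes through verbatim. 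Your handling of the ``in particular'' clause is fine: perfect $H$ has $\nnrank(H) = 0$ via $\overline{\chi}(H) = \alpha(H)$.
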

\begin{proof}
Assume $\nnrank(G)=r_1$ and  $\nnrank(H)=r_2$, so that $M_G\in \tilde{\mathcal{Q}}_{n}^{(r_1)}$ and $M_H\in \tilde{\mathcal{Q}}_{m}^{(r_2)}$.
Observe that the following identity holds
\begin{align}
\label{eq:GplusH}
M_{G\oplus H}
&= \begin{pmatrix} M_G + \alpha(H)(I_n + A_G) & -J_{n,m} \cr -J_{m,n} & M_H + \alpha(G)(A_H + I_m)\end{pmatrix}\\ \nonumber
&= \begin{pmatrix} M_G\left(1+\tfrac{\alpha(H)}{\alpha(G)}\right) + \tfrac{\alpha(H)}{\alpha(G)}J_{n,n} & -J_{n,m} \cr -J_{m,n} & M_H\left(1+\frac{\alpha(G)}{\alpha(H)}\right) + \tfrac{\alpha(G)}{\alpha(H)}J_{m,m} \end{pmatrix} \\
&= \left(1+\tfrac{\alpha(H)}{\alpha(G)}\right) \begin{pmatrix} M_G& 0 \cr 0 & 0 \end{pmatrix} +\left(1+\tfrac{\alpha(G)}{\alpha(H)}\right)\begin{pmatrix} 0 & 0 \cr 0 & M_H\end{pmatrix} + \begin{pmatrix} \frac{\alpha(H)}{\alpha(G)}J_{n,n} & -J_{n,m} \cr -J_{m,n} & \frac{\alpha(G)}{\alpha(H)}J_{m,m}\end{pmatrix}.  \nonumber
\end{align}
We look at the last expression. The first matrix belongs to $\newQ_{n+m}^{(r_1)}$ because $M_G\in \newQ_n^{(r_1)}$ and the cones $\newQ^{(r)}$ are closed under bordering. Similarly, the second matrix belongs to $\newQ_{n+m}^{(r_2)}$. The right-most matrix is positive semidefinite and therefore belongs to $\newQ_{n+m}^{(0)}$. Finally, using Lemma \ref{lemma-struc-new} ii), we obtain the desired result.
\end{proof}

\subsection{Convergence of the hierarchy $\newNu^{(r)}(G)$}
We now analize the convergence of the hierarchy $\newNu^{(r)}(G)$ to $\alpha(G)$. Our main results consist of 1) giving conditions that ensure that $\nnrank(G)\leq 1$ and 2) showing the finite convergence of the hierarchy for all graphs, i.e.,  $\nnrank(G)$ is finite for every graph $G$.  Moreover, we will give explicit bounds on $\nnrank(G)$.

First, for $i\in V$, let $G_i$ the graph defined in~\eqref{def:Gi}. We consider the matrices 
\begin{equation}\label{P(i)}
P_i:=M_{G_i} + (\alpha(G)-\alpha(G_i)) (I+A_{G_i}) = \alpha(G)(I+A_{G_i}) -J.
\end{equation}
Let $\vd=(d_1, d_2, \dots, d_n)\in \mathbb{R}_+^n$, then the following identity holds.

\begin{equation}\label{equation-d}
 \Big(\sum_{i\in V} d_ix_i\Big) x^T M_Gx
= \sum_{i\in V} d_ix_i x^T P_i x
+\alpha(G)\sum_{i\in V} d_ix_ix^T (A_G- A_{G_i}) x\Big.
\end{equation}

In the following lemma, we characterize the vectors $\vd$ for which the second summand on the right-hand side of this identity has all its coefficients nonnegative.
\begin{lemma}\label{lemma-ineq-d}
Let $G=(V=[n], E)$ be a graph and let $\vd=(d_1, \dots, d_n)\in \R_+^n$. For $i\in V$, let $P_i$ be the matrix defined in (\ref{P(i)}). Then, the polynomial $\sum_{i\in V} d_ix_i x^T (A_G-A_{G_i}) x$
has nonnegative coefficients if and only if
\begin{align}\label{ineq-d}
d_j+d_k\geq d_i \quad \text{ for all distinct } i,j,k \in V \text{ such that } \{i,j\}, \{i,k\} \in E, \{j,k\}\notin E.
\end{align}
\end{lemma}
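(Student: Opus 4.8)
The plan is to expand the polynomial $\sum_{i\in V} d_i x_i\, x^T(A_G - A_{G_i})x$ monomial-by-monomial and identify exactly which coefficients can be negative. First I would compute the difference $A_G - A_{G_i}$ explicitly. Recall from~\eqref{def:Gi} that $G_i = (G\setminus i^\perp)\oplus K_{i^\perp}$, so $G_i$ agrees with $G$ except on the edges incident to the extended neighborhood $i^\perp$: every pair of vertices in $i^\perp$ becomes adjacent (forming a clique), and all edges between $i^\perp$ and $V\setminus i^\perp$ are deleted. Thus the entries of $A_G - A_{G_i}$ are: $-1$ for each pair $\{j,k\}\subseteq i^\perp$ that is a non-edge of $G$ (these become edges in $G_i$), $+1$ for each edge $\{j,k\}\in E$ with exactly one endpoint in $i^\perp$ (these are deleted in $G_i$), and $0$ otherwise. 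Note the diagonal is unchanged.

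Next I would write $x^T(A_G - A_{G_i})x = 2\sum_{\{j,k\}} (A_G - A_{G_i})_{jk}\, x_j x_k$ and multiply by $d_i x_i$, so the full expression becomes a sum over triples $(i,\{j,k\})$ of terms $2 d_i (A_G - A_{G_i})_{jk}\, x_i x_j x_k$. The key observation is that a \emph{negative} contribution of the form $-2 d_i x_i x_j x_k$ arises precisely when $\{j,k\}$ is a non-edge of $G$ with both $j,k \in i^\perp$; expanding ``$j,k\in i^\perp$'' via the definition of the extended neighborhood, and discarding the degenerate cases where the three indices coincide (which only affect already-present diagonal terms), the genuinely relevant case is when $j$ and $k$ are each either equal or adjacent to $i$. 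After collecting all contributions to a fixed monomial $x_i x_j x_k$ (summing over which of the three indices plays the role of the ``$i$'' multiplier and which edges/non-edges are present), the plan is to show that the only monomials whose total coefficient can be negative are those $x_i x_j x_k$ with $i,j,k$ distinct, $\{i,j\},\{i,k\}\in E$ and $\{j,k\}\notin E$ — i.e.\ the ``cherry'' configuration centered at $i$ — and that for such a monomial the coefficient equals $d_j + d_k - d_i$ (up to a fixed positive factor), because $i\in j^\perp\cap k^\perp$ contributes $+$ signs while the non-edge $\{j,k\}$ contributes a $-$ sign from the $d_i$ term.

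The main obstacle is the careful bookkeeping: for a fixed monomial $x_i x_j x_k$, each of the three variables can in principle act as the outer multiplier $d_\ell x_\ell$ while the remaining pair indexes an entry of $A_G - A_{G_\ell}$, so I must sum three contributions and verify that the positive ones (coming from deleted edges between $i^\perp$ and its complement) always dominate except in the cherry case. Concretely, I would argue that if $\{j,k\}\in E$ then the $-1$ entries cannot occur (a non-edge within some $\ell^\perp$), and all surviving contributions are nonnegative; if $\{j,k\}\notin E$ the only negative contribution to the coefficient of $x_i x_j x_k$ comes from the multiplier $d_i x_i$ paired with the non-edge $\{j,k\}\subseteq i^\perp$, which requires $j,k\in i^\perp$, i.e.\ $\{i,j\},\{i,k\}\in E$, while the multipliers $d_j x_j$ and $d_k x_k$ each contribute $+1$ (since the edge $\{i,j\}$, resp.\ $\{i,k\}$, is deleted in $G_j$, resp.\ $G_k$). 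This yields coefficient proportional to $d_j + d_k - d_i$, which is nonnegative for all such triples exactly when~\eqref{ineq-d} holds; conversely a violation of~\eqref{ineq-d} produces a strictly negative coefficient, giving the ``only if'' direction. The degenerate cases (two or three indices equal) must be checked separately to confirm they contribute only nonnegative diagonal-type terms and hence do not affect the characterization.
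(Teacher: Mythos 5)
Your proposal is correct and takes essentially the same route as the paper's proof: compute the entries of $A_G-A_{G_i}$ case by case, expand monomial by monomial, check that the degenerate monomials $x_i^3$ and $x_i^2x_j$ vanish, and observe that for distinct $i,j,k$ the coefficient is nonzero only in the cherry configuration, where it equals $2(d_j+d_k-d_i)$. One small index slip in your parenthetical: for the monomial $x_ix_jx_k$, the $+1$ contribution of the multiplier $d_jx_j$ comes from the entry $(A_G-A_{G_j})_{ik}$, i.e.\ because the edge $\{i,k\}$ is deleted in $G_j$ (and symmetrically $\{i,j\}$ is deleted in $G_k$) — not, as you wrote, because $\{i,j\}$ is deleted in $G_j$; in fact $\{i,j\}$ survives in $G_j$ since both endpoints lie in $j^\perp$ and remain adjacent inside the clique $K_{j^\perp}$. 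The stated contributions and the resulting coefficient are nonetheless correct, so this is a one-line fix rather than a gap.
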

\begin{proof}

 First, observe that, for $i\in V$, the coefficient of the monomial $x_i^3$ is $d_i(A_G-A_{G_i})_{i,i} = 0$. Now, for two distinct vertices $i,j\in V$, the coefficient of $x_i^2x_j$ equals 
\[2d_i(A_G-A_{G_i})_{i,j} + d_j(A_G-A_{G_j})_{i,i} = 2d_i(A_G-A_{G_i})_{i,j} = 0,\]
where the equality holds as $(A_G)_{i,j} = (A_{G_i})_{i,j}$. Finally, we consider the monomials of the form $x_ix_jx_k$, for three distinct vertices $i,j,k\in V$, which equals 
\[2d_i(A_G-A_{G_i})_{j,k} + 2d_j(A_G-A_{G_j})_{i,k} + 2d_k(A_G-A_{G_k})_{i,j}.\]
Observe that the following holds
$$
(A_G-A_{G_i})_{j,k} =
     \begin{cases}
       -1 &\quad\text{if } \{j,k\}\notin E \text{ and } \{i,j\}, \{i,k\}\in E  \\
        1  & \quad \text{if } \{j,k\}\in E \text{ and } (\{i,j\}\in E, \{i,k\}\notin E \text{ or } \{i,j\}\notin E, \{i,k\}\in E)\\
       0 & \quad \text{otherwise}
     \end{cases}
$$

\begin{figure}[H]
\begin{tikzpicture}[
    x=.3cm,y=.3cm,
    edge/.style={line width=0.8pt}
  ]
  %% 1) Vertex positions
  \coordinate (vj)  at (5,0);
  \coordinate (vi)  at (10,5);
  \coordinate (vk)  at (15,0);

  %% 2) Draw all edges
  \foreach \i/\j in {
    i/j,i/k} {
    \draw[edge] (v\i) -- (v\j);
  }

  %% 3) Draw each vertex 
  \foreach \i in {i,j,k}{
    \draw[fill=black, draw=black] (v\i) circle (6.5pt);
    \node[white,font=\footnotesize] at (v\i) {$\i$};
  }
\end{tikzpicture}
\caption{Graph $L_3$}\label{fig-t}
\end{figure}
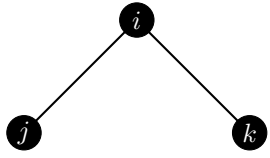

Therefore, the coefficient of $x_ix_jx_k$ is zero unless the subgraph induced by $\{i,j,k\}$ takes the form (in some order) of the graph $L_3$ in Figure \ref{fig-t}. 
In this case, the coefficient of $x_ix_jx_k$ is $-2d_i+2d_j+2d_k$, and thus the coefficients of $\sum_{i\in V} d_ix_i x^T (A_G-A_{G_i}) x$ are nonnegative if and only if condition (\ref{ineq-d}) holds.
\end{proof}

Since $P_i - M_{G_i} \ge 0$ for all $i\in V$, we obtain the following result by combining identity (\ref{equation-d}) and the result of Lemma~\ref{lemma-ineq-d}.

\begin{lemma}\label{lemma-M_G_i}
Let $\vd \in \mathbb{R}_+^V$ be a vector satisfying the inequalities (\ref{ineq-d}). Then, the following inequality holds.
\begin{equation}\label{inequalities-weight}
 \Big(\sum_{i\in V} d_ix_i\Big) x^T M_Gx
\geq _c \sum_{i\in V} d_ix_i x^T M_{G_i} x
\end{equation}
\end{lemma}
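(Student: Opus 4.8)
The plan is to derive the inequality \eqref{inequalities-weight} directly from the identity \eqref{equation-d}, which decomposes $\left(\sum_{i\in V} d_ix_i\right)x^TM_Gx$ into two summands. First I would invoke \cref{lemma-ineq-d}: since the vector $\vd$ is assumed to satisfy the inequalities \eqref{ineq-d}, the second summand $\alpha(G)\sum_{i\in V} d_ix_i x^T(A_G-A_{G_i})x$ has all of its coefficients nonnegative, and hence lies in $\CN_{n,3}$. In the notation of \cref{def:orders}, this says precisely that
\[
\alpha(G)\sum_{i\in V} d_ix_i x^T (A_G - A_{G_i}) x \geq_c 0.
\]

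Next I would handle the first summand. The key observation is that $P_i - M_{G_i} = (\alpha(G)-\alpha(G_i))(I+A_{G_i}) \geq 0$ entrywise, since $\alpha(G_i)\leq \alpha(G)$ (as noted after \eqref{def:Gi}) and $I+A_{G_i}$ is a $0/1$ matrix. Consequently, for each $i\in V$, the difference $x^T(P_i - M_{G_i})x$ is a quadratic form with nonnegative coefficients, and multiplying by the monomial $d_ix_i$ (with $d_i\geq 0$) keeps the coefficients nonnegative. Summing over $i\in V$ gives
\[
\sum_{i\in V} d_ix_i x^T P_i x \;\geq_c\; \sum_{i\in V} d_ix_i x^T M_{G_i} x.
\]

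Finally I would combine these two facts. Substituting both into the identity \eqref{equation-d} and using that $\geq_c$ is preserved under adding a polynomial that is $\geq_c 0$ (which follows from $\CN_{n,r}$ being a cone, as recorded in the remarks around \cref{def:orders}), I obtain
\[
\Big(\sum_{i\in V} d_ix_i\Big) x^T M_Gx
\;\geq_c\; \sum_{i\in V} d_ix_i x^T M_{G_i} x + \alpha(G)\sum_{i\in V} d_ix_i x^T (A_G-A_{G_i})x
\;\geq_c\; \sum_{i\in V} d_ix_i x^T M_{G_i} x,
\]
which is exactly \eqref{inequalities-weight}. I do not anticipate any serious obstacle here: the statement is essentially a bookkeeping assembly of two already-established facts, and the only point requiring a little care is checking that the additivity of the order $\geq_c$ is being applied to homogeneous polynomials of the same degree $3$, so that the cone $\CN_{n,3}$ is the relevant ambient cone throughout. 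The genuine content has already been front-loaded into \cref{lemma-ineq-d}, so this lemma is a short corollary of it together with the entrywise nonnegativity $P_i - M_{G_i}\geq 0$.
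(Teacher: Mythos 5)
Your proposal is correct and follows exactly the paper's argument: the paper proves \cref{lemma-M_G_i} in one sentence by combining the identity \eqref{equation-d}, the coefficient-nonnegativity from \cref{lemma-ineq-d}, and the entrywise inequality $P_i - M_{G_i} \ge 0$, which is precisely the three-step assembly you carry out. You merely spell out the bookkeeping (degree-$3$ homogeneity, additivity of $\geq_c$) that the paper leaves implicit, so there is nothing to add.
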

This result is a generalization of inequality (\ref{equation-1}). Indeed, we recover~\eqref{equation-1} by taking as $\vd$ the all-ones vector, which clearly satisfies  (\ref{ineq-d}).

Now, we can present a recursive bound on $\nnrank(G)$.

\begin{proposition}\label{prop:bound-rank1}
Let $G$ be a graph, and let $0\neq \vd=(d_1, d_2, \dots, d_n)\in \R^n_+$ be a vector satisfying the inequalities (\ref{ineq-d}). Then
\begin{align}
\nnrank(G)\leq 1+ \sum_{i\in V: d_i>0} \nnrank(G\setminus i^\perp)
\end{align}
\end{proposition}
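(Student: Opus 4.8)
The plan is to combine the weighted recursion from \cref{lemma-M_G_i} with the structural closure properties of $\tilde{\CQ}^{(r)}$ established in \cref{sec:StructureTilde}, mimicking the logic that gives \cref{prop:GL07} but exploiting the fact that $\tilde{\CQ}^{(r)}$ is closed under bordering (which $\CQ^{(r)}$ is not). Set $\alpha := \alpha(G)$ and abbreviate $r_i := \nnrank(G\setminus i^\perp)$ for each $i$ with $d_i > 0$. The goal is to produce an explicit nonnegative multiplier $p$ of degree $r := \sum_{i: d_i>0} r_i$ and a certificate showing $\big(\sum_i d_i x_i\big)\, p \cdot x^T M_G x \in \CH_{n,r+1}$, which by \eqref{def:newQ} (after rescaling so $\|(\sum_i d_i x_i)p\|_1 = 1$) witnesses $M_G \in \tilde{\CQ}_n^{(r+1)}$ and hence $\nnrank(G) \le r+1$.

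First I would apply \cref{lemma-M_G_i}: since $\vd$ satisfies \eqref{ineq-d}, we have the $\geq_c$ inequality $\big(\sum_{i\in V} d_i x_i\big)\, x^T M_G x \geq_c \sum_{i\in V} d_i x_i\, x^T M_{G_i} x$. The key observation is that $M_{G_i}$ is the matrix $M$ of a \emph{disjoint union}: by \eqref{def:Gi}, $G_i = (G\setminus i^\perp) \oplus K_{i^\perp}$, so $G_i$ decomposes into the graph $G\setminus i^\perp$ together with a clique. Since a clique is a perfect graph, \cref{lemma-sum} gives $\nnrank(G_i) \le \nnrank(G\setminus i^\perp) = r_i$, which means $M_{G_i} \in \tilde{\CQ}^{(r_i)}_{|V|}$ (after the bordering by the clique block, which is harmless by \cref{lemma-struc-new}.i combined with the fact that the clique block's $M$-matrix lies in $\tilde{\CQ}^{(0)}$). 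Then for each $i$ with $d_i > 0$ there is a nonnegative multiplier $p_i \in \Rplus{n,r_i}$ with $\|p_i\|_1 = 1$ and $p_i \cdot x^T M_{G_i} x \in \CH_{n,r_i}$.

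Next I would build the global multiplier. For each $i$, the single monomial $d_i x_i$ contributes the term $d_i x_i\, x^T M_{G_i} x$; I multiply this by $\prod_{j: d_j > 0,\ j \ne i} p_j$, a product of nonnegative-coefficient polynomials, which by \cref{lemma-ineq-basic}.ii preserves the $\sqsupseteq$ relation. Summing over $i$ and using that $\CH_{n,r}$ is a convex cone closed under multiplication by $\Rplus{}$ elements (\cref{lemma-ineq-basic}.ii), I obtain $\big(\prod_{j: d_j>0} p_j\big)\big(\sum_i d_i x_i\, x^T M_{G_i} x\big) \sqsupseteq 0$, i.e.\ lies in $\CH_{n,r+1}$. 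Finally I chain this with the $\geq_c$ inequality from \cref{lemma-M_G_i} via \cref{lemma-ineq-basic}.i and ii: multiplying that inequality through by the nonnegative polynomial $\prod_j p_j$ upgrades it to a $\sqsupseteq$ statement, and transitivity of $\sqsupseteq$ (together with the closure of $\CH_{n,r+1}$ under addition) yields $\big(\sum_i d_i x_i\big)\big(\prod_j p_j\big)\cdot x^T M_G x \in \CH_{n,r+1}$. Taking $p := \frac{1}{Z}\big(\sum_i d_i x_i\big)\prod_{j: d_j>0} p_j \in \Rplus{n,r+1}$ with normalizing constant $Z = \|(\sum_i d_i x_i)\prod_j p_j\|_1$ gives exactly the certificate required by \eqref{def:newQ}, so $M_G \in \tilde{\CQ}^{(r+1)}_n$ and $\nnrank(G) \le 1 + \sum_{i: d_i>0} r_i$.

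The main obstacle I anticipate is bookkeeping the degrees and the transition from the $\geq_c$ inequality of \cref{lemma-M_G_i} to an honest membership in $\CH_{n,r+1}$: I must verify that each $p_i$ lives on the same set of $n$ variables (not just the variables of $G\setminus i^\perp$), which is legitimate since a nonnegative-coefficient polynomial in a subset of variables is still in $\Rplus{n,r_i}$, and that the bordering identification of $M_{G_i}$ with an element of $\tilde{\CQ}^{(r_i)}_n$ respects the variable indexing so that the factor $x_i$ matches the correct coordinate. A secondary subtlety is ensuring $p \ne 0$ and that its degree is exactly $r+1$ rather than larger, which follows because each $p_i$ is homogeneous of degree $r_i$ and $\sum_i d_i x_i$ is homogeneous of degree $1$; the product is therefore homogeneous of degree $1 + \sum_i r_i$, matching the claimed level $1 + \sum_{i: d_i>0}\nnrank(G\setminus i^\perp)$.
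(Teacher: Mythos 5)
Your argument is correct and follows essentially the same route as the paper's own proof: invoke \cref{lemma-sum} to conclude $M_{G_i}\in\tilde{\CQ}_n^{(r_i)}$ (since $G_i=(G\setminus i^\perp)\oplus K_{i^\perp}$ and cliques are perfect), take normalized certificates $q_i$, multiply the weighted inequality of \cref{lemma-M_G_i} through by $\prod_{j:\,d_j>0}q_j$, and read off the degree $1+\sum_{i:\,d_i>0}r_i$. The one blemish is your parenthetical gloss that $M_{G_i}$ arises by \emph{bordering} $M_{G\setminus i^\perp}$ with a clique block---it does not, since $M_{G_i}$ carries $-J$ off-diagonal blocks---but this is harmless because your primary citation of \cref{lemma-sum}, whose proof handles those cross terms via the decomposition \eqref{eq:GplusH}, already supplies exactly what you need.
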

\begin{proof}
Let $r_i:=\nnrank(G\setminus i^\perp)$. By Lemma~\ref{lemma-sum}, $\nnrank((G\setminus i^\perp)\oplus K_{i^\perp}) =  \nnrank(G\setminus i^\perp)=r_i$. Then, $M_{G_i}\in \newQ_n^{(r_i)}$ for every $i\in V$. Let $q_i \in \Rplus{n,r_i}$ with $\|q_i\|_1=1$ be such that
\[
q_i \cdot x^TM_{G_i}x \sqsupseteq 0.
\]
Observe that inequality (\ref{inequalities-weight}) holds because $\vd$ satisfies (\ref{ineq-d}). Now, we multiply by $\prod_{j\in V: d_j>0}q_j$ at both sides of the inequality (\ref{inequalities-weight}) to obtain, 
\[
 \Big(\prod_{j\in V: d_j>0}q_j\Big)\Big(\sum_{i\in V} d_ix_i\Big) x^T M_Gx  \ge_c \sum_{i\in V} \Big(\prod_{\substack{j\in V: \\ j \neq i, d_j>0}}q_j\Big) d_i x_i q_i\cdot x^T M_{G_i} x  \sqsupseteq 0.
\]

Thus, $\nnrank(G) \le 1 +  \deg  \Big(\prod_{j\in V: d_j>0}q_j\Big)  = 1+ \sum_{i: d_i>0}r_i$.
\end{proof}

\subsection*{Graphs with low $\nnrank$} 
We have seen in Section \ref{prel-graphs} that a sufficient condition for a graph $G$ to have $\nnrank(G)=0$ is that $\overline{\chi}(G)=\alpha(G)$. This condition is satisfied, in particular, by all perfect graphs. We now use proposition~\ref{prop:bound-rank1} to give a sufficient condition that implies that $\nnrank$ of a graph is at most 1.

\begin{proposition}\label{prop-lrank1}
Let $G=(V,E)$ be a graph. Assume there exists $S\subseteq V$ satisfying the following two properties,
\begin{itemize}
 \item [(i)] For every $i\in S$, $\nnrank(G\setminus i^\perp)=0$.
 \item [(ii)] For every two distinct vertices $i,j\notin S$ such that $\{i,j\}\notin E$, there is no $k\in S$ such that $\{i,k\}\in E$ and $\{j,k\}\in E$.
\end{itemize}
 Then, $\nnrank(G)\leq 1$.
\end{proposition}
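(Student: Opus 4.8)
The plan is to apply \cref{prop:bound-rank1} with a carefully chosen weight vector $\vd$ that is supported exactly on $S$. Specifically, I would set $d_i = 1$ for all $i \in S$ and $d_i = 0$ for all $i \notin S$. The two things I must verify are that this $\vd$ satisfies the inequalities \eqref{ineq-d}, and that the resulting bound from \cref{prop:bound-rank1} collapses to $1$.

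First I would check condition \eqref{ineq-d} for this choice of $\vd$. The inequality $d_j + d_k \geq d_i$ must hold for every triple of distinct vertices $i,j,k$ inducing a copy of $L_3$ with apex $i$ (i.e.\ $\{i,j\},\{i,k\}\in E$ but $\{j,k\}\notin E$). If $d_i = 0$ the inequality holds trivially since the left side is nonnegative, so the only triples that could cause trouble are those with $d_i = 1$, i.e.\ $i \in S$; but then the right side is $1$ and I need $d_j + d_k \geq 1$, which holds as long as at least one of $j,k$ lies in $S$. The potential failure case is a triple with apex $i \in S$ and both $j,k \notin S$ together with $\{j,k\}\notin E$ --- and this is exactly the configuration that hypothesis (ii) forbids, reading (ii) with the roles: there is no $k \in S$ adjacent to two nonadjacent vertices $i,j \notin S$. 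Matching the variable names carefully, hypothesis (ii) rules out precisely the offending $L_3$-triples, so \eqref{ineq-d} holds.

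Next I would feed this $\vd$ into \cref{prop:bound-rank1}, which gives
\[
\nnrank(G) \leq 1 + \sum_{i \in V : d_i > 0} \nnrank(G \setminus i^\perp) = 1 + \sum_{i \in S} \nnrank(G \setminus i^\perp).
\]
By hypothesis (i), each summand $\nnrank(G \setminus i^\perp) = 0$, so the sum vanishes and $\nnrank(G) \leq 1$, as claimed. I would also note that $\vd \neq 0$ is required by \cref{prop:bound-rank1}; this holds provided $S \neq \emptyset$, and if $S = \emptyset$ the hypotheses are vacuous while the claim must be argued separately (in that degenerate case the emptiness of $S$ forces no constraints, and one can take any single vertex, or simply observe the statement is about an upper bound that may be established trivially).

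The main obstacle, and the step requiring the most care, is the bookkeeping in matching the quantified variables of hypothesis (ii) to the $L_3$-configuration appearing in \cref{lemma-ineq-d}. In \cref{lemma-ineq-d} the ``apex'' vertex $i$ (the one subtracted on the right of \eqref{ineq-d}) is the common neighbor of the nonadjacent pair, whereas in hypothesis (ii) the common neighbor is named $k$ and is required to lie in $S$. I must make sure the relabeling is consistent: the dangerous weight triples are those where the apex has positive weight, and (ii) precisely guarantees no such apex lies in $S$ while its two independent neighbors lie outside $S$. Everything else is a routine verification once the correspondence is pinned down.
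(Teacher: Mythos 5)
Your proof is correct and is essentially the paper's own: the paper's entire proof consists of applying \cref{prop:bound-rank1} with $\vd=\chi^S$, and your check of \eqref{ineq-d} via hypothesis (ii) merely makes explicit the relabeling (apex $i$ in \cref{lemma-ineq-d} versus common neighbor $k$ in (ii)) that the paper leaves implicit. One minor remark: when $S=\emptyset$ both hypotheses are vacuous yet the conclusion can fail (the icosahedron of \cref{ex:Ico} has $\nnrank$ equal to $2$), so the proposition tacitly assumes $S\neq\emptyset$; your proposed fallback for that case does not actually work, but this is a defect of the statement shared by the paper's proof, not a gap in your argument.
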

\begin{proof}
The result follows by applying Proposition ~\ref{prop:bound-rank1} taking $\vd= \chi^S \in \mathbb{R}^V$ as the indicator vector of $S$, i.e., $d_i=1$ if $i\in S$ and $d_i=0$ if $i\in V\setminus S$.
\end{proof}
This result will be used in Section \ref{sec-separation} to show that several graphs $G$ with high $\nrank(G)$ have $\nnrank(G)=1$. As an illustration, we show the following example.
\begin{example}
We consider the graphs $B$ and $C$ shown in \cref{fig.graphB} and \cref{fig.graphC}. These graphs have been shown to be the smallest graphs with $\nrank$ 2 \cite{LV21b}. We show that their  $\nnrank$ equals 1, thus showing that the bounds $\tilde{\nu}^{(r)}(G)$ are stronger than $\nu^{(r)}(G)$. Since $\nu^{(0)}(B)=\tilde{\nu}^{(0)}(B)>\alpha(B)$, and $\nu^{(0)}(C)=\tilde{\nu}^{(0)}(C)>\alpha(C)$, we have that $\nnrank(B)\geq 1$ and $\nnrank(C)\geq 1$. Here, we show that they have $\nnrank$~1.  We apply \cref{prop-lrank1} by taking (in each graph) $S$ as the vertices marked with $\rhombusfill$. Condition (i) holds because for each $i\in S$, the graph $B\setminus i^\perp$ (resp. $C\setminus i^\perp$) is perfect as it does not have cycles, and thus $\nnrank(B\setminus i^\perp)=0$ (resp. $\nnrank(C\setminus i^\perp)=0$). It is straightforward to check Condition (ii). Then, $\nnrank(B)\leq1$ and $\nnrank(C)\leq 1$.
\end{example}

\begin{figure}[H]
\begin{minipage}{0.4\linewidth}
\begin{center}
\definecolor{uuuuuu}{rgb}{0.27,0.27,0.27}
\definecolor{uququq}{rgb}{0.25,0.25,0.25}
\begin{tikzpicture}[line cap=round,line join=round,=triangle 45,x=.65cm,y=.65cm]

\draw [line width=1pt] (-4.43,4.86)-- (-3.55,6.09);
\draw [line width=1pt] (-3.55,6.09)-- (-2.12,5.64);
\draw [line width=1pt] (-2.12,5.64)-- (-0.69,6.11);
\draw [line width=1pt] (-0.69,6.11)-- (0.21,4.91);
\draw [line width=1pt] (-0.67,3.68)-- (0.21,4.91);
\draw [line width=1pt] (-0.67,3.68)-- (-2.10,4.13);
\draw [line width=1pt] (-3.53,3.65)-- (-2.10,4.13);
\draw [line width=1pt] (-3.53,3.65)-- (-4.43,4.86);
\draw [line width=1pt] (-2.10,4.13)-- (-2.12,5.64);
\draw [line width=1pt] (-3.55,6.09)-- (-0.69,6.11);

\node[shape=diamond, fill=uququq, minimum size=11pt, inner sep=0pt] at (-3.53,3.65) {};

\node[shape=diamond, fill=uququq, minimum size=11pt, inner sep=0pt] at (-3.55,6.09) {};

\node[shape=diamond, fill=uququq, minimum size=11pt, inner sep=0pt] at (-2.12,5.64)  {};

\node[shape=diamond, fill=uququq, minimum size=11pt, inner sep=0pt] at (-2.10,4.13) {};

\node[shape=diamond, fill=uququq, minimum size=11pt, inner sep=0pt] at (-0.69,6.11)  {};

\node[shape=diamond, fill=uququq, minimum size=11pt, inner sep=0pt] at (-0.67,3.68) {};

\begin{scriptsize}

\draw [fill=uuuuuu] (-4.43,4.86) circle (2.5pt);
\draw[color=uuuuuu] (-4.39,5.26) node {};

\draw [fill=uuuuuu] (0.21,4.91) circle (2.5pt);
\draw[color=uuuuuu] (0.24,5.30) node {};

\end{scriptsize}
\end{tikzpicture}
\caption{Graph $B$}\label{fig.graphB}
\end{center}
\end{minipage}
\begin{minipage}{0.4\linewidth}
\begin{center}
\definecolor{uuuuuu}{rgb}{0.27,0.27,0.27}
\definecolor{uququq}{rgb}{0.25,0.25,0.25}
\begin{tikzpicture}[line cap=round,line join=round,=triangle 45,x=1cm,y=1.1cm]

\draw [line width=1pt] (-4.65,4.06)-- (-3.18,4.06);
\draw [line width=1pt] (-3.18,4.06)-- (-1.70,4.06);
\draw [line width=1pt] (-1.70,4.06)-- (-0.23,4.06);
\draw [line width=1pt] (-0.23,4.06)-- (-0.23,2.58);
\draw [line width=1pt] (-0.23,2.58)-- (-1.70,2.58);
\draw [line width=1pt] (-1.70,2.58)-- (-3.18,2.58);
\draw [line width=1pt] (-3.18,2.58)-- (-4.65,2.58);
\draw [line width=1pt] (-4.65,2.58)-- (-4.65,4.06);
\draw [line width=1pt] (-3.18,2.58)-- (-1.70,4.06);
\draw [line width=1pt] (-1.70,2.58)-- (-3.18,4.06);

\node[shape=diamond, fill=uququq, minimum size=11pt, inner sep=0pt] at (-3.18,2.58) {};

\node[shape=diamond, fill=uququq, minimum size=11pt, inner sep=0pt] at (-3.18,4.06) {};

\node[shape=diamond, fill=uququq, minimum size=11pt, inner sep=0pt] at (-1.70,4.06) {};

\node[shape=diamond, fill=uququq, minimum size=11pt, inner sep=0pt] at (-1.70,2.58) {};

\begin{scriptsize}
\draw [fill=uuuuuu] (-4.65,4.06) circle (2.5pt);
\draw[color=uuuuuu] (-4.51,4.30) node {};
\draw [fill=uuuuuu] (-4.65,2.58) circle (2.5pt);
\draw[color=uuuuuu] (-4.51,2.85) node {};
\draw [fill=uuuuuu] (-0.23,2.58) circle (2.5pt);
\draw[color=uuuuuu] (-0.09,2.85) node {};
\draw [fill=uuuuuu] (-0.23,4.06) circle (2.5pt);
\draw[color=uuuuuu] (-0.09,4.30) node {};
\end{scriptsize}
\end{tikzpicture}
\caption{Graph $C$}\label{fig.graphC}
\label{G_8}
\end{center}
\end{minipage}

\end{figure}
\subsection*{Finite convergence of the hierarchy $\tilde{\nu}^{(r)}(G)$}

We are ready to show that $\newNu^{(r)}(G)$ converges finitely for all graphs $G$.
\begin{proposition}
Let $G$ be a graph, then $\tilde{\nu}^{(r)}(G)=\alpha(G)$ for some $r\in \mathbb{N}$.
\end{proposition}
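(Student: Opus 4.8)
The plan is to prove finite convergence by induction on $\alpha(G)$, using the recursive bound from \cref{prop:bound-rank1} together with the observation that the graphs $G\setminus i^\perp$ have strictly smaller independence number whenever $i$ lies in a maximum stable set. First I would establish the base case: if $\alpha(G)=1$, then $G$ is a complete graph, which is perfect, so $\overline{\chi}(G)=\alpha(G)$ and hence $\nnrank(G)=0<\infty$ by the discussion in \cref{prel-graphs}.

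For the inductive step, I would assume that $\nnrank(H)$ is finite for every graph $H$ with $\alpha(H)<\alpha(G)$. The key is to select a vector $\vd\in\R^n_+$ satisfying the inequalities~\eqref{ineq-d} of \cref{lemma-ineq-d} so that \cref{prop:bound-rank1} applies, and so that every vertex $i$ with $d_i>0$ yields a graph $G\setminus i^\perp$ with strictly smaller independence number. Recall that $\alpha(G\setminus i^\perp)\le \alpha(G)-1$ always holds, since $i^\perp$ contains $i$ and any stable set of $G\setminus i^\perp$ together with $\{i\}$ is stable in $G$. The cleanest choice is the all-ones vector $\vd=\vecO$, which trivially satisfies~\eqref{ineq-d} (this is exactly the specialization recovering~\eqref{equation-1}). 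Then \cref{prop:bound-rank1} gives
\begin{equation*}
\nnrank(G)\le 1+\sum_{i\in V}\nnrank(G\setminus i^\perp).
\end{equation*}
Since $\alpha(G\setminus i^\perp)\le\alpha(G)-1<\alpha(G)$ for every $i\in V$, each term $\nnrank(G\setminus i^\perp)$ is finite by the induction hypothesis, and the sum is finite because $V$ is finite. Hence $\nnrank(G)$ is finite, which is precisely the statement that $\tilde{\nu}^{(r)}(G)=\alpha(G)$ for some $r\in\mathbb{N}$.

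The main thing to verify carefully is that the induction is well-founded, i.e.\ that $\alpha(G\setminus i^\perp)$ is genuinely smaller than $\alpha(G)$ for \emph{every} vertex $i$, not merely for those in a maximum stable set; this holds because removing $i^\perp$ deletes at least the vertex $i$ and all its neighbors, so any maximum stable set of $G\setminus i^\perp$ extends to a larger stable set of $G$ by adding $i$. One subtlety to address is the degenerate case where $G\setminus i^\perp$ is the empty graph (which occurs when $i^\perp=V$): there $\nnrank$ is defined to be $0$, so the bound still holds. I do not expect any serious obstacle here, since the heavy lifting—establishing the recursive inequality and the structural closure properties of $\tilde{\CQ}_n^{(r)}$ needed for \cref{prop:bound-rank1}—has already been done; the present statement is essentially a termination argument for the recursion. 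The only alternative worth noting is that if one prefers an explicit bound rather than mere finiteness, the same induction with $\vd=\vecO$ yields a concrete (though far from optimal) estimate, foreshadowing the sharper bound $r=\left(\tfrac{|G|}{\alpha(G)}+1\right)^{\alpha(G)}$ stated in \cref{cor:dGenBound}.
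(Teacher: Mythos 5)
Your proof is correct and takes essentially the same route as the paper's: both apply \cref{prop:bound-rank1} with the all-ones vector $\vd=\vecO$ and conclude that the recursion terminates, so that $\nnrank(G)$ is finite. The only cosmetic difference is that you induct on $\alpha(G)$ while the paper runs a minimal-counterexample argument on the number of vertices; both are well-founded since $G\setminus i^\perp$ is strictly smaller in both parameters.
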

\begin{proof} 
We assume by contradiction that the graph $G=([n],E)$ is a minimal counterexample. By the definition of $\newNu$-rank, $G$ can not be empty. We are assuming that $\nnrank(G)=\infty$, and that for every graph $H$ with $|V(H)|<n$ we have $\nnrank(H)<\infty$. Then, for all $i\in V$, we have $\nnrank(G\setminus i^\perp)<\infty$. Then, by Proposition~\ref{prop:bound-rank1} taking $\vd$ as the all-ones vector, we obtain $\nnrank(G)<\infty$, reaching a contradiction. \end{proof}

Now that we have proven that the $\nnrank$ is finite, we proceed to obtain bounds on this rank for several classes of graphs. 
\subsection*{General bounds on $\nnrank(G)$}

Given a graph $G$, a vector $0\neq \vd=(d_1, d_2, \dots, d_n)\in \R^n_+$ and $S \subset V$ let
\begin{equation}\label{def:m_S}
m^G_{S,\vd} = \sum_{i \in V \setminus S^\perp} d_i x_i, \text{  with  } m_{S,\vd}^G = 1 \text{ when } V = S^\perp.
\end{equation}
When $G$ is clear, we simply write $m_{S,\vd}$ instead of $m^G_{S,\vd}$.
Given a graph $G$ we define $$\CI(G) = \{S \subset V: S \text{ independent set in }G\},$$ where $\emptyset$ is considered an independent set, that is $\emptyset \in \CI(G)$ for all $G$. Also, we set
\[\CI^-(G) =\{S \in \CI(G): |S| < \alpha(G)\} \text{ and } \CI^{\max}(G) = \{S \in \CI(G): |S| = \alpha(G)\}.\]  
Next, we construct a multiplier that shows finiteness of  $\nnrank(G)$ for all $G$.
\begin{theorem}\label{thm:multiplier}
Let $G$ be a graph, and let $0\neq \vd=(d_1, d_2, \dots, d_n)\in \R^n_+$ be such that the inequalities (\ref{ineq-d}) hold, and for all $S \in \CI(G)$, such that $\supp(\vd) \subseteq S^\perp$ we have  $M_{G \setminus S^\perp} \in \newQ^{(0)}$. Then,
\[ \left(\prod_{S \in \CI^-(G[\supp\ \vd])} m^G_{S,\vd}\right) x^T M_G x \sqsupseteq 0.\]
\end{theorem}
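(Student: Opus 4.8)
The plan is to prove the statement by induction on $|\supp\vd|$. Write $T=\supp\vd$, $a=\alpha(G[T])$, abbreviate $m_S:=m^G_{S,\vd}$, and let $\mathcal P:=\prod_{S\in\CI^-(G[T])}m_S$ denote the target multiplier. Since $d_i=0$ off $T$, we have $m_\emptyset=\sum_{i\in V}d_ix_i$, and separating the $S=\emptyset$ factor gives $\mathcal P=m_\emptyset\cdot R$, where $R:=\prod_{S\in A}m_S$ with $A:=\{S\in\CI(G[T]):1\le|S|\le a-1\}$; as a product of linear forms with nonnegative coefficients, $R\geq_c 0$. The starting point is \cref{lemma-M_G_i}: as $\vd$ satisfies \eqref{ineq-d}, inequality \eqref{inequalities-weight} gives $m_\emptyset\,x^TM_Gx\geq_c\sum_{i\in T}d_ix_i\,x^TM_{G_i}x$, and multiplying by $R\geq_c 0$ yields $\mathcal P\,x^TM_Gx\geq_c\sum_{i\in T}R\,d_ix_i\,x^TM_{G_i}x$. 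It then suffices to show each summand is $\sqsupseteq0$.

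For each $i\in T$ I set $Q_i:=\mathcal P(G\setminus i^\perp,\vd^{(i)})$, where $\vd^{(i)}:=\vd|_{V\setminus i^\perp}$ (with $Q_i:=1$ when $\vd^{(i)}=0$), and I claim $Q_i\,x^TM_{G_i}x\sqsupseteq0$. If $\vd^{(i)}\neq0$, I first check the two hypotheses descend to $(G\setminus i^\perp,\vd^{(i)})$: inequalities \eqref{ineq-d} survive because $G\setminus i^\perp$ is an induced subgraph, and for the $\newQ^{(0)}$-condition any $S'\in\CI(G\setminus i^\perp)$ with $\supp\vd^{(i)}\subseteq (S')^{\perp_{G\setminus i^\perp}}$ yields $S:=\{i\}\cup S'\in\CI(G)$ with $T\subseteq S^\perp$ and $(G\setminus i^\perp)\setminus(S')^{\perp_{G\setminus i^\perp}}=G\setminus S^\perp$, so the original hypothesis applies. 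The induction hypothesis then gives $Q_i\,x'^TM_{G\setminus i^\perp}x'\sqsupseteq0$ in the variables $x'$ of $V\setminus i^\perp$. To lift this to $G_i=(G\setminus i^\perp)\oplus K_{i^\perp}$, I apply the block identity \eqref{eq:GplusH} with $H=K_{i^\perp}$: since $M_{K_{i^\perp}}=0$, one gets $x^TM_{G_i}x=c\,x'^TM_{G\setminus i^\perp}x'+q(x)$ with $c>0$ and $q$ a psd quadratic form; multiplying by $Q_i\geq_c0$ and using \cref{lemma-ineq-basic} gives $Q_i\,x^TM_{G_i}x\sqsupseteq0$. If instead $\vd^{(i)}=0$ (i.e. $T\subseteq i^\perp$), the hypothesis applied to $S=\{i\}$ gives $M_{G\setminus i^\perp}\in\newQ^{(0)}$, hence $M_{G_i}\in\newQ^{(0)}$ by \cref{lemma-sum}, so $x^TM_{G_i}x=Q_i\,x^TM_{G_i}x\sqsupseteq0$.

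The final ingredient is divisibility: $R=Q_i\cdot(R/Q_i)$ with $R/Q_i\geq_c0$. Here I use the identification $m^{G\setminus i^\perp}_{S',\vd^{(i)}}=m^G_{\{i\}\cup S',\vd}$ (both equal $\sum_{j\in T\setminus(\{i\}\cup S')^\perp}d_jx_j$, via $(S')^{\perp_{G\setminus i^\perp}}=(S')^{\perp_G}\setminus i^\perp$) together with $(G\setminus i^\perp)[T\setminus i^\perp]=G[T\setminus i^\perp]$. Thus the factors of $Q_i$ are indexed by $B_i=\{\{i\}\cup S':S'\in\CI(G[T\setminus i^\perp]),\ |S'|<a_i\}$ with $a_i:=\alpha(G[T\setminus i^\perp])$. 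Since $i$ is nonadjacent to every independent set of $G[T\setminus i^\perp]$, appending $i$ shows $a_i+1\le a$, so every $S\in B_i$ has $1\le|S|\le a_i\le a-1$, i.e. $B_i\subseteq A$; hence $R=Q_i\prod_{S\in A\setminus B_i}m_S$ with nonnegative-coefficient quotient. Consequently $R\,d_ix_i\,x^TM_{G_i}x=(R/Q_i)\,d_ix_i\,(Q_i\,x^TM_{G_i}x)\sqsupseteq0$ by \cref{lemma-ineq-basic}, and since every polynomial in sight is homogeneous all these terms lie in a common $\CH_{n,r}$; summing over $i$ and chaining with the $\geq_c$ step above gives $\mathcal P\,x^TM_Gx\sqsupseteq0$.

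The main obstacle is the combinatorial bookkeeping of the last paragraph: establishing that the single product $\mathcal P$ is \emph{simultaneously} divisible by every per-vertex certificate $Q_i$ (the common-multiple property), which hinges on the bound $a_i\le a-1$ and the exact matching of the linear factors $m^{G\setminus i^\perp}_{S',\vd^{(i)}}$ with $m^G_{\{i\}\cup S',\vd}$. The secondary care points are verifying that both hypotheses descend to $(G\setminus i^\perp,\vd^{(i)})$ and correctly handling the degenerate branch $\vd^{(i)}=0$ (which also covers the base case $|T|=1$) directly from the $\newQ^{(0)}$-hypothesis.
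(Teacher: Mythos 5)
Your proof is correct and follows essentially the same route as the paper's: the same starting inequality from \cref{lemma-M_G_i}, the same bordering step via~\eqref{eq:GplusH} with $H=K_{i^\perp}$ (where $M_{K_{i^\perp}}=0$), and the same induction in which the factors $m_{S,\vd}$ with $i\in S$ are matched to the certificate for $\bigl(G\setminus i^\perp,\ \vd|_{V\setminus i^\perp}\bigr)$ through the identification $m^{\hat G}_{S',\hat \vd}=m^G_{S'\cup\{i\},\vd}$ --- you merely induct on $|\supp\ \vd|$ instead of on $\alpha(G[\supp\ \vd])$. If anything, your write-up is slightly more careful than the paper's: you claim only the containment $B_i\subseteq A$ (the paper asserts the set equality $\{S\in\CI^-(G[D]):i\in S\}=\{S\cup\{i\}:S\in\CI^-(G[\hat D])\}$, which can fail, harmlessly for the argument, when $\alpha(G[D\setminus i^\perp])\le \alpha(G[D])-2$, e.g.\ for star-shaped supports), and you explicitly treat the degenerate branch $\vd^{(i)}=0$ via the $\newQ^{(0)}$-hypothesis applied to $S=\{i\}$ together with \cref{lemma-sum}, a case the paper's inductive step passes over since its induction hypothesis requires a nonzero weight vector.
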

\begin{proof}
Using $m_{\emptyset,\vd} = \sum_{i\in V} d_ix_i$, by Lemma \ref{lemma-M_G_i} we have

\begin{equation}\label{eq:finite1}
m_{\emptyset,\vd}\ x^T M_Gx = \Big(\sum_{i\in V} d_ix_i\Big) x^T M_Gx \ge_c \sum_{i\in V} d_ix_i x^T M_{G\setminus i^\perp \oplus K_{i^\perp}} x.
\end{equation}
By~\eqref{eq:GplusH}, for any $i\in V$, we have,
\begin{equation}\label{eq:finite2}
x^TM_{G\setminus i^\perp \oplus K_{i^\perp}}x \sqsupseteq \left(1+\tfrac{1}{\alpha(G \setminus i^\perp)}\right) x^T\begin{pmatrix} M_{G\setminus i^\perp}& 0 \cr 0 & 0 \end{pmatrix}x.
\end{equation}
Combining~\eqref{eq:finite1} and~\eqref{eq:finite2} we obtain,
 \begin{equation}
\label{eq:finite3}  m_{\emptyset,\vd} x^T M_G x  \sqsupseteq  \sum_{i\in V} \left(1+\tfrac{1}{\alpha(G \setminus i^\perp)}\right) d_ix_i x^T\begin{pmatrix} M_{G\setminus i^\perp}& 0 \cr 0 & 0 \end{pmatrix}x.
\end{equation}

Let $D = \supp\ \vd$. We prove the statement of the theorem by induction on $\alpha(G[D])$. Assume, $\alpha(G[D]) = 1$. Let $i \in D$. We have $D\subseteq {i}^\perp$, and thus, by the hypothesis of the problem, $G \setminus i^\perp \in \newQ^{(0)}$. This implies $\begin{pmatrix} M_{G\setminus i^\perp}& 0 \cr 0 & 0 \end{pmatrix} \in \newQ^{(0)}$,
as $\newQ^{(r)}$ is closed under bordering for all $r$. By~\eqref{eq:finite3} we have then $m_{\emptyset,\vd} x^T M_G x  \sqsupseteq 0$, which proves the case $\alpha(G[D]) = 1$, as $\CI^-(G[D]) = \{\emptyset\}$.

Let $k \ge 1$. Assume now that the theorems' statement is true whenever $\alpha(G[D]) \le k$ and that $\alpha(G[D]) = k+1$. By~\eqref{eq:finite3} we obtain,
\begin{align}
\label{eq.finite4}  \left(\prod_{S \in \CI^-(G[D])} m_{S,\vd}\right) x^T M_G x  \hspace{-3cm}
& \hspace{3cm} \sqsupseteq \left(\prod_{\substack{S \in \CI^-(G[D]):\\S \neq \emptyset}} m_{S,\vd}\right) \sum_{i\in V} \left(1+\tfrac{1}{\alpha(G \setminus i^\perp)}\right) d_ix_i x^T\begin{pmatrix} M_{G\setminus i^\perp}& 0 \cr 0 & 0 \end{pmatrix}x\\
\nonumber &= \sum_{i\in V} \left(1+\tfrac{1}{\alpha(G \setminus i^\perp)}\right) d_ix_i \left( \prod_{\substack{S \in \CI^-(G[D]):\\ i \notin S, S \neq \emptyset}} m_{S,\vd}\right)  \left( \prod_{\substack{S \in \CI^-(G[D]):\\ i \in S}} m_{S,\vd}\right)  x^T \begin{pmatrix} M_{G\setminus i^\perp}& 0 \cr 0 & 0 \end{pmatrix}x.
\end{align}

And thus, to finish the proof, it is enough to show for each $i \in D$, we have 
\begin{equation}\label{eq.finite5}
 \left( \prod_{\substack{S \in \CI^-(G[D]): i \in S}} m_{S,\vd}\right)  x^T \begin{pmatrix} M_{G\setminus i^\perp}& 0 \cr 0 & 0 \end{pmatrix}x \sqsupseteq 0.
\end{equation}
Fix $i \in D$. Let $\hat G = G  \setminus i^\perp $, and $\hat \vd$ (resp. $\hat x$) be obtained from $\vd$ (resp. $x$) by dropping the entries from $i^\perp$. We have that $\{S \in \CI^-(G[D]): i \in S\} = \{S \cup \{i\}: S \in \CI^-(G[\hat D])\}$, where $\hat D := \supp\ \hat \vd$ $= D \setminus i^\perp$. Also, for all $S \in \CI(G[\hat D])$, we have $m^G_{S \cup \{i\},\vd} (x) = m^{\hat G}_{S,\hat \vd} (\hat x)$. Therefore,
\begin{align*}
\left( \prod_{\substack{S \in \CI^-(G[D]): i \in S}} m_{S,\vd}\right)  x^T \begin{pmatrix} M_{G\setminus i^\perp}& 0 \cr 0 & 0 \end{pmatrix}x
& = \left( \prod_{S \in \CI^-(G[\hat D])} m^{G}_{S\cup \{i\},\vd}\right) x^T\begin{pmatrix} M_{\hat G}& 0 \cr 0 & 0 \end{pmatrix}x \\
& = \left( \prod_{S \in \CI^-(\hat G[\hat D])} m^{\hat G}_{S,\hat \vd}\right) \hat x^T M_{\hat G} \hat x.
\end{align*}
Notice that $\hat \vd$ satisfies the inequalities in (\ref{ineq-d}) in $\hat G$. Also, for any $S \in \CI(\hat G)$, such that $\hat D \subseteq S^\perp$ we have  $D \subseteq (S \cup \{i\})^\perp$. Also, $\hat G \setminus S^\perp = G \setminus (S \cup \{i\})^\perp$. Thus, $M_{\hat G \setminus S^\perp } = M_{G \setminus (S \cup \{i\})^\perp} \in \newQ^{(0)}$.
As $\alpha(\hat{G}[\hat D]) =\alpha(G[\hat{D}])<\alpha(G[D])$, by the induction hypothesis, we have
\[
\left( \prod_{S \in \CI^-(\hat G[\hat D])} m^{\hat G}_{S,\hat \vd}\right) \hat x^T M_{\hat G} \hat x \sqsupseteq 0,
\]
proving~\eqref{eq.finite5}.
\end{proof}
From Theorem~\ref{thm:multiplier} we obtain a bound in the $\nnrank$.

\begin{corollary}\label{cor:dGenBound} Let $G$ be a graph.
Then  $\nnrank(G) \le  \left(\tfrac {|G|}{\alpha(G)}+1 \right)^{\alpha(G)}$
\end{corollary}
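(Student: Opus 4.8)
The plan is to read off the bound from \cref{thm:multiplier} applied to the all-ones vector $\vd=\vecO$, converting the degree of the multiplier it produces into a bound on $\nnrank(G)$. Recall that $M_G\in\newQ_n^{(\rho)}$ whenever some nonzero $p\in\CN_{n,\rho}$ satisfies $p\cdot x^TM_Gx\sqsupseteq 0$ (normalising so that $\|p\|_1=1$); consequently $\nnrank(G)$ is bounded above by the degree of any such admissible multiplier, and it suffices to produce one of degree at most $\left(\tfrac{|G|}{\alpha(G)}+1\right)^{\alpha(G)}$.

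First I would verify the hypotheses of \cref{thm:multiplier} for $\vd=\vecO$. The all-ones vector satisfies \eqref{ineq-d} (as noted after \cref{lemma-M_G_i}), and since $\supp(\vecO)=V$, the only $S\in\CI(G)$ with $\supp(\vecO)\subseteq S^\perp$ are the dominating independent sets, for which $S^\perp=V$; then $G\setminus S^\perp$ is empty and $M_{G\setminus S^\perp}\in\newQ^{(0)}$ by the convention that the empty graph has $\nnrank$ zero. Hence \cref{thm:multiplier} gives
\[
\Big(\prod_{S\in\CI^-(G)}m^G_{S,\vecO}\Big)\,x^TM_Gx\sqsupseteq 0.
\]
Each factor $m^G_{S,\vecO}=\sum_{i\in V\setminus S^\perp}x_i$ is a linear form (or the constant $1$ when $S$ is dominating), so the product is a nonzero homogeneous polynomial with nonnegative coefficients of degree at most $|\CI^-(G)|$. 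After normalising its $\|\cdot\|_1$ this witnesses $M_G\in\newQ_n^{(|\CI^-(G)|)}$, and therefore $\nnrank(G)\le|\CI^-(G)|\le|\CI(G)|$.

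It remains to establish the purely combinatorial estimate $|\CI(G)|\le\left(\tfrac{n}{\alpha}+1\right)^{\alpha}$, writing $n=|G|$ and $\alpha=\alpha(G)$; this is the heart of the matter. This is the classical extremal bound for the number of independent sets of a graph with fixed number of vertices and fixed independence number, the extremal configuration being a disjoint union of $\alpha$ cliques $K_{a_1}\oplus\cdots\oplus K_{a_\alpha}$ with $\sum_i a_i=n$, which has exactly $\prod_i(a_i+1)$ independent sets; since $\sum_i(a_i+1)=n+\alpha$, the AM--GM inequality yields $\prod_i(a_i+1)\le\left(\tfrac{n}{\alpha}+1\right)^{\alpha}$. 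To justify that a disjoint union of cliques is extremal I would argue that a graph maximising $|\CI(\cdot)|$ among all graphs on $n$ vertices with independence number $\alpha$ may be taken to have no induced $P_3$ --- hence to be a disjoint union of cliques --- by a shifting/compression argument on an induced path $u-v-w$ (with $u,w$ non-adjacent) that does not decrease the number of independent sets nor increase $\alpha$. Combining this with the previous paragraph gives $\nnrank(G)\le|\CI(G)|\le\left(\tfrac{n}{\alpha}+1\right)^{\alpha}$. I expect this extremal count, and specifically the compression step, to be the only real obstacle; the rest is a direct application of \cref{thm:multiplier}, and if a self-contained proof is not wanted the extremal bound may simply be cited.
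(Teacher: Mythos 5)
Your proposal is correct and follows the paper's proof almost exactly: the paper likewise applies \cref{thm:multiplier} with the all-ones vector $\vd$ (noting it satisfies \eqref{ineq-d}) to get $\nnrank(G)\le|\CI^-(G)|$, and then bounds the count of independent sets by $\left(\tfrac{n}{\alpha}+1\right)^{\alpha}$. The only divergence is in that last combinatorial step: the paper cites Zykov's per-size bound, that the number of independent sets of size $k$ is at most $\binom{\alpha}{k}\left(\tfrac{n}{\alpha}\right)^k$, and sums via the binomial theorem, whereas you invoke the total-count extremal statement (disjoint unions of $\alpha$ cliques maximize $|\CI(G)|$, then AM--GM) --- two faces of the same classical result, and citing it, as you allow, is exactly what the paper does. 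One caution on your sketched self-contained alternative: the compression move on an induced path $u\text{--}v\text{--}w$ is under-specified, and the naive candidates fail (adding the edge $\{u,w\}$ destroys the independent sets containing both $u$ and $w$, so it \emph{decreases} the count; deleting $\{u,v\}$ may \emph{increase} $\alpha$); the classical repair is Zykov symmetrization --- for suitable adjacent vertices, replace one by a true twin of the other, which preserves $\alpha(G)\le\alpha$ and does not decrease the count, driving the graph to a disjoint union of cliques. Since you explicitly fall back on citing the bound, this does not constitute a gap, but the compression step as written would need that fix to stand on its own. (A cosmetic point you handled more carefully than needed: if some factors $m_{S,\vd}$ are constant the certificate has degree $\rho<|\CI^-(G)|$, which only improves the bound, since a degree-$\rho$ certificate yields one at any level $r\ge\rho$ after multiplying by $\left(\sum_i x_i\right)^{r-\rho}$.)
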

\begin{proof} Let $n = |G|$ and $\alpha = \alpha(G)$.
Let $d_i = 1$ for all $i \in G$, which clearly satisfies the inequalities in (\ref{ineq-d}). By \cref{thm:multiplier} we obtain that $\nnrank(G) \le |\CI^-(G)|$.
For all integers $0 \le k \le \alpha$, the number of independent sets in $G$ of size $k$ is at most ${\alpha \choose k} \left( \frac n{\alpha}\right)^k$~\cite{zykov1952}.
We have then, 
\[ \nnrank(G) \le \sum_{k=1}^{\alpha-1}  {\alpha \choose k} \left( \frac {n}{\alpha}\right)^k     
\le  \left(\tfrac {n}{\alpha} + 1\right)^{\alpha} \]
\end{proof}

For the class of graphs with a constant number of independent sets of maximal size, we obtain a bound on $\nnrank(G)$ independent of the size of the graph.

\begin{proposition}\label{prop:Imaxbound}
  Let $G$ be a graph.  Then $\nnrank(G)\le \alpha(G)^2 + 2^{\alpha(G)}|\CI^{\max}(G)|$.
\end{proposition}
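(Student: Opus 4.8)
The plan is to improve on the generic bound of \cref{cor:dGenBound} by separating the independent sets of $G$ into those of maximal size and those of smaller size, and bounding their contributions to the multiplier degree differently. The key observation is that \cref{thm:multiplier} produces a multiplier of degree $|\CI^-(G[\supp\ \vd])|$, so the quantity we must control is the number of \emph{non-maximal} independent sets contained in $\supp\ \vd$, rather than all independent sets of $G$. The strategy is to choose $\vd$ cleverly --- not the all-ones vector --- so that $\supp\ \vd$ is small, while still satisfying the inequalities (\ref{ineq-d}) and the hypothesis ``$M_{G\setminus S^\perp}\in\newQ^{(0)}$ for all $S\in\CI(G)$ with $\supp\ \vd\subseteq S^\perp$''.

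First I would fix a maximum independent set $T\in\CI^{\max}(G)$ and set $\vd=\chi^T$, the indicator of $T$, so that $\supp\ \vd = T$. One checks $\chi^T$ satisfies (\ref{ineq-d}): since $T$ is independent, the configuration $\{i,j\},\{i,k\}\in E$, $\{j,k\}\notin E$ with $i\in T$ forces at most one of $j,k$ to lie in $T$, so the inequality $d_j+d_k\ge d_i$ reads either $1\ge 1$ or $1\ge 0$ and holds. For the second hypothesis, one must verify that whenever $S\in\CI(G)$ satisfies $T\subseteq S^\perp$, the graph $G\setminus S^\perp$ has $\nnrank 0$. The idea is that $T\subseteq S^\perp$ together with $|S|\le\alpha(G)$ and $T$ being maximum should force $S$ itself to be a maximum independent set, whence $G\setminus S^\perp$ has independence number $\alpha(G)-|S|=0$, i.e.\ it is a complete (or empty) graph and belongs to $\newQ^{(0)}$. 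With this choice, \cref{thm:multiplier} yields $\nnrank(G)\le|\CI^-(G[T])|$, which since $G[T]$ is an independent set on $\alpha(G)$ vertices is at most $2^{\alpha(G)}$; but this is weaker than the claimed bound, so a single $T$ does not suffice.

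To obtain the stated bound I would instead sum the contributions over all $T\in\CI^{\max}(G)$, presumably by taking $\vd$ supported on $\bigcup_{T\in\CI^{\max}(G)}T$ or by running the recursion of \cref{prop:bound-rank1} through a carefully chosen branching set and then invoking \cref{thm:multiplier} on the residual graphs whose maximal independent sets have been exhausted. The term $2^{\alpha(G)}|\CI^{\max}(G)|$ should arise as $|\CI^{\max}(G)|$ copies of a $2^{\alpha(G)}$-type count of non-maximal independent sets supported on each maximum independent set, while the additive $\alpha(G)^2$ term should come from the depth of a peeling process that removes extended neighborhoods $i^\perp$ of at most $\alpha(G)$ vertices, each step costing a factor controlled by the recursion in \cref{prop:bound-rank1}. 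Concretely, I expect to argue that after removing a bounded number (on the order of $\alpha(G)$) of vertices via the recursion, the remaining graph has all its non-maximal independent sets confined to the union of its maximum independent sets, at which point \cref{thm:multiplier} closes the estimate.

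The main obstacle will be the bookkeeping that keeps the two terms additive rather than multiplicative: a naive application of \cref{prop:bound-rank1} multiplies ranks of subgraphs $G\setminus i^\perp$ and would blow the bound up exponentially in $n$ rather than in $\alpha(G)$. The crux is therefore to design the weight vector $\vd$ (equivalently, the support $S$) so that the degree of the multiplier in \cref{thm:multiplier} only sees independent sets landing inside the maximum independent sets, thereby trading the $(\tfrac{|G|}{\alpha(G)}+1)^{\alpha(G)}$ dependence on $n$ for the $n$-free quantity $|\CI^{\max}(G)|$. Verifying that the chosen $\vd$ simultaneously satisfies (\ref{ineq-d}) and the $\newQ^{(0)}$-hypothesis for \emph{all} relevant $S$, while controlling $|\CI^-(G[\supp\ \vd])|$ by $\alpha(G)^2+2^{\alpha(G)}|\CI^{\max}(G)|$, is the delicate combinatorial heart of the argument.
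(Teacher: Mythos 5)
There is a genuine gap, and it starts in your very first step. Two of your verifications are false. First, $\vd=\chi^T$ for a maximum independent set $T$ need \emph{not} satisfy the inequalities (\ref{ineq-d}): in that condition $i$ is the \emph{center} of the path, so if $i\in T$ then both neighbors $j,k$ lie outside $T$ (as $T$ is independent) and the inequality reads $0\geq 1$; it fails whenever some vertex of $T$ has two nonadjacent neighbors. (Your case analysis has the center's membership backwards; this is exactly why the paper's \cref{prop-lrank1} needs its extra condition (ii) before using an indicator vector in \cref{prop:bound-rank1}.) Second, the implication ``$T\subseteq S^\perp$ with $T$ maximum forces $S$ maximum'' is false. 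Concretely, let $G$ consist of two disjoint $5$-cycles joined completely to an independent set $T_0$ with $|T_0|=k\geq 5$; then $T_0$ is the unique maximum independent set, yet $S=\{c\}$ for a single cycle vertex $c$ satisfies $T_0\subseteq S^\perp$ while $G\setminus S^\perp$ is the disjoint union of an edge and an intact $5$-cycle, which has $\tilde\nu^{(0)}=1+\sqrt5>3=\alpha(G\setminus S^\perp)$, so $M_{G\setminus S^\perp}\notin\newQ^{(0)}$ and the hypothesis of \cref{thm:multiplier} is violated (this same graph also violates (\ref{ineq-d}) for $\chi^{T_0}$). Note also that had your single-$T$ argument worked it would have given the bound $2^{\alpha}-1$, which is \emph{smaller} than the claimed bound, not ``weaker''; the obstruction is the hypotheses, not the size of the count.

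The second half of your proposal is a statement of intent rather than an argument, and it misses the mechanism that actually makes the two terms additive. The paper does not apply \cref{thm:multiplier} here at all. It proves by induction on $\alpha-|T|$, over independent sets $T$, the stronger statement~\eqref{eq:AuxStrong}: the fixed multiplier $m_{\emptyset}^{\alpha^2}\prod_{U\in\CI^{\max}_T}\prod_{T\subseteq S\subseteq U} m_S$ certifies $x^TM_{G_T}x\sqsupseteq 0$ for the iterated graphs $G_T=(\cdots(G_{t_1})_{t_2}\cdots)_{t_k}$, in which extended neighborhoods are replaced by cliques rather than deleted; each recursion step uses the weighted inequality of \cref{lemma-ineq-d} with $\vd$ the indicator of $V\setminus T^\perp$ (which \emph{does} satisfy (\ref{ineq-d}) in $G_T$, precisely because $T^\perp$ has been turned into cliques) to peel off one factor $m_T$. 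The decisive ingredient you never supply is the treatment of independent sets $T$ that extend to no maximum independent set: then $\alpha(G_T)<\alpha$, and this integrality gap lets the Pe\~na--Vera--Zuluaga theorem \cite[Theorem 1]{Pena_Vera_Zuluaga_2014} certify $m_{\emptyset}^{\alpha^2}\, x^TM_{G_T}x\geq_c 0$ outright. This is what truncates the recursion so that only sets $S$ contained in some $U\in\CI^{\max}(G)$ ever contribute a factor $m_S$ (giving at most $2^{\alpha}|\CI^{\max}(G)|$ such factors), and it is the true source of the additive $\alpha^2$ term --- a degree bound for a nonnegative-coefficient certificate in the presence of slack --- not, as you guess, the ``depth of a peeling process,'' which is only $\alpha$. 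Without this ingredient, or a substitute for it, your plan has no mechanism to keep the $\alpha^2$ and $2^{\alpha}|\CI^{\max}(G)|$ contributions additive.
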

\begin{proof}  Let $\alpha = \alpha(G)$. Given $S \subseteq V$ define $m_S := \sum_{i\in V\setminus S^\perp} x_i$. Also, given $T = \{t_1,\dots,t_k\} \in \CI(G)$ consider the graph $G_T := (\cdots((G_{t_1})_{t_2})\cdots)_{t_k}$, and define  $\CI_T^{\max} := \{U\in \CI^{\max}(G): T \subset U\}$. 

We will prove by induction the following stronger statement: \begin{multline}\label{eq:AuxStrong}
\text{ Let $0 \le k \le \alpha$ and $T = \{t_1,\dots,t_{k}\} \in \CI(G)$ be given. We have,}\\
\left( m_{\emptyset}^{\alpha^2} \prod_{U \in \CI^{\max}_T}\prod_{T \subseteq S \subseteq U}  m_S \right)   x^T(\alpha (I + A_{G_T})  -J ) x \sqsupseteq 0.
\end{multline}
Notice that taking $k=0$ we obtain $\nnrank(G)\le \alpha(G)^2 + 2^{\alpha}|\CI^{\max}(G)|$.

To prove~\eqref{eq:AuxStrong} we proceed by induction on $\alpha - k$. when $k= \alpha$, $G_{t_1,\cdots,t_k}$  is the disjoint union of $\alpha$ cliques, which has $\tilde{\nu}$-rank 0.
Now, let $k \ge 0$ and assume the statement holds for all $k'$ such that $k < k'\le \alpha$.
If $\CI_T^{\max} = \emptyset$, then 
$\alpha(G_T) < \alpha$ and thus, 
\[\left( m_{\emptyset}^{\alpha^2} \prod_{U \in \CI^{\max}_T}\prod_{T \subseteq S \subseteq U}  m_S \right)  x^T (\alpha (I + A_{G_T}) + J) x =  m_{\emptyset}^{\alpha^2}  x^T (\alpha (I + A_{G_T}) + J)
x \geq_c 0, 
\]
where the last inequality follows from~\cite[Theorem 1]{Pena_Vera_Zuluaga_2014}.

We assume now $\CI_T^{\max} \neq \emptyset$. Let $d$ be the indicator of $V \setminus T^\perp$, that is $d_i = 1$ if $i \in V \setminus T^\perp$ and $d_i =0$ if $i \in T^\perp$. We have that $d$ satisfies~\eqref{ineq-d} in the graph $G_T$. We use the following equation 
similar to~\eqref{equation-d} \begin{align}
\nonumber m_{T} x^T(\alpha (I + A_{G_T}) + J)x
& = \Big(\sum_{i\in V} d_ix_i\Big) x^T(\alpha (I + A_{G_T}) + J)x \\
\nonumber&= \sum_{i\in V} d_ix_i x^T( \alpha (I + A_{(G_T)_i}) + J) x +\alpha \sum_{i\in V} d_ix_ix^T (A_{G_T} - A_{(G_T)_i}) x \\
\label{eq:Aux1Imaxbound}&\sqsupseteq \sum_{i\in V\setminus T^\perp} x_i x^T( \alpha (I + A_{(G_T)_i}) + J) x.\qquad \text{(by \cref{lemma-ineq-d})}
\end{align}
For each $i \in V \setminus  T^{\perp}$ we have $ T \cup \{i\} \in \CI(G)$. 
Therefore,
\begin{align*}
 &   \left( m_{\emptyset}^{\alpha^2} \prod_{U \in \CI^{\max}_T}\prod_{T \subseteq S \subseteq U}  m_S \right)  x^T(\alpha (I + A_{G_T})  -J ) x\\
  &  =\left( m_{\emptyset}^{\alpha^2} \prod_{U \in \CI^{\max}_T}\prod_{T \subsetneq S \subseteq U}  m_S \right)  m_T^{|\CI^{\max}_T|} x^T(\alpha (I + A_{G_T})  -J )\\
  &  \sqsupseteq
     \left( m_{\emptyset}^{\alpha^2} \prod_{U \in \CI^{\max}_T}\prod_{T \subsetneq S \subseteq U}  m_S \right)  m_T^{|\CI^{\max}_T|-1} \sum_{i\in V\setminus T^\perp} x_i x^T( \alpha (I + A_{(G_T)_i}) + J) x,
     \qquad\text{by equation~\eqref{eq:Aux1Imaxbound}}
     \\
&=
m_T^{|\CI^{\max}_T|-1} \sum_{i\in V\setminus T^\perp} x_i      \left( \prod_{\substack{U \in \CI^{\max}_T\\i\notin U}}\prod_{T \subsetneq S \subseteq U}  m_S \right)\left( m_{\emptyset}^{\alpha^2} 
\prod_{U \in \CI^{\max}_{T\cup \{i\}}}
\prod_{T\cup \{i\} \subset S \subseteq U}  m_S 
\right)  x^T( \alpha (I + A_{(G_T)_i}) + J) x.
\end{align*}
And then~\eqref{eq:AuxStrong} follows, as by induction hypothesis for each $i \in V\setminus T$,
\[\left( m_{\emptyset}^{\alpha^2} 
\prod_{U \in \CI^{\max}_{T\cup \{i\}}}
\prod_{T\cup \{i\} \subset S \subseteq U}  m_S 
\right)  x^T( \alpha (I + A_{(G_T)_i}) + J) x \sqsupseteq 0
\]
\end{proof}
\section{Graphs with high $\nrank$}\label{section-examples}

In this section, we analyze the bounds $\nu^{(r)}(G)$ introduced by Peña, Vera and Zuluaga \cite{Pena_Vera_Zuluaga_2007}. In particular, we develop a technique that allows us to lower bound the $\nrank$ of certain graphs. The results of this section will be used in Section \ref{sec-separation} to exhibit several families of graphs for which the $\nnrank$ and $\nrank$ differ, thus showing that the bounds $\tilde{\nu}^{(r)}(\cdot)$ are stronger than the bounds $\nu^{(r)}(\cdot)$. The results of this section were included in the PhD thesis~\cite{LFV-thesis}.

Our technique builds on the work by Laurent and Vargas \cite{LV21b},  where the authors introduced the notion of $\CK^{(0)}$ and $\CK^{(1)}$-certificates. We recall the notion of  critical edges. An edge $e$ in a graph $G$ is {\em critical} if $\alpha(G\setminus e)=\alpha(G)+1$. For example, in an odd cycle $C_{2n+1}$ ($n\geq 1$) all edges are critical, and no edge is critical in an even cycle $C_{2n}$ (for $n\geq 2$). Equivalently, and edge $e=\{i,j\}$ is critical in $G$ if there exist a set $S\subseteq V$ such that $S\cup\{i\}$ and $S\cup\{j\}$ are stable of size $\alpha(G)$. For example, in the 5-cycle $C_5$ the edge $\{3,4\}$ is critical, as the sets $\{1,3\}$ and $\{1,4\}$ are stable of size $\alpha(C_5)=2$.

 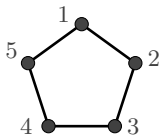
\begin{figure}[H]
  \centering
\definecolor{uuuuuu}{rgb}{0.26666666666666666,0.26666666666666666,0.26666666666666666}
\definecolor{uququq}{rgb}{0.25098039215686274,0.25098039215686274,0.25098039215686274}
\begin{tikzpicture}[line cap=round,line join=round,=triangle 45,x=.3cm,y=.3cm]
%\clip(-14.726363636362857,4.301074380165286) rectangle (-7.156115702477376,11.044876033057852);
\draw [line width=1pt] (-13.578509963461387,8.326106157908189)-- (-11.19999999999867,10.054194799648329);
\draw [line width=1pt] (-11.19999999999867,10.054194799648329)-- (-8.821490036535954,8.326106157908187);
\draw [line width=1pt] (-8.821490036535954,8.326106157908187)-- (-9.72999999999844,5.53);
\draw [line width=1pt] (-12.6699999999989,5.53)-- (-9.72999999999844,5.53);
\draw [line width=1pt] (-12.6699999999989,5.53)-- (-13.578509963461387,8.326106157908189);
\begin{scriptsize}
\draw [fill=uququq] (-12.6699999999989,5.53) circle (2.5pt);
\draw[color=uququq] (-13.66, 5.52) node {$4$};
\draw [fill=uququq] (-9.72999999999844,5.53) circle (2.5pt);
\draw[color=uququq] (-8.92, 5.52) node {$3$};
\draw [fill=uuuuuu] (-8.821490036535954,8.326106157908187) circle (2.5pt);
\draw[color=uququq] (-8.0, 8.52) node {$2$};
\draw [fill=uuuuuu] (-11.19999999999867,10.054194799648329) circle (2.5pt);
\draw[color=uququq] (-12, 10.5) node {$1$};
\draw [fill=uuuuuu] (-13.578509963461387,8.326106157908189) circle (2.5pt);
\draw[color=uququq] (-14.3, 8.9) node {$5$};
\end{scriptsize}
\end{tikzpicture}
 \caption{Graph $C_5$}
 \label{FigC51}
  \end{figure}

The following characterization of the zeros of the form $x^TM_Gx$ on the standard simplex will be very useful. There, critical edges play a crucial role.
\begin{theorem}[\citet{LV21a}]\label{minimizers-LV}
Let $x\in \Delta_n$ with support $S:=\{i\in V:x_i>0\}$ and let $V_1, V_2, \dots, V_k$ denote the connected components of  $G[S]$, the subgraph of $G$ induced by the support $S$ of $x$. Then, $x^TM_Gx=0$ if and only if $k=\alpha(G)$ and, for all $h\in [k]$, $V_h$ is a clique of $G$ and $\sum_{i\in V_h}x_i=\frac{1}{\alpha(G)}$. In addition, the edges that are contained in $S$ 
 are critical edges of $G$.
\end{theorem}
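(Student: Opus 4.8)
The plan is to reduce the statement to a weighted Motzkin--Straus theorem and then to pin down the structure of its minimizers. First I would record that for $x\in\Delta_n$ one has $x^TJx=1$, hence
\[
x^T M_G x = \alpha\, x^T(A_G+I)x - 1,\qquad \alpha:=\alpha(G),
\]
so that $x^TM_Gx=0$ is equivalent to $x^T(A_G+I)x = 1/\alpha$. Since $x$ is supported on $S$ and $G[S]$ has no edges between distinct components $V_1,\dots,V_k$, the form splits as $x^T(A_G+I)x=\sum_{h=1}^k x_{V_h}^T(A_{G[V_h]}+I)x_{V_h}$. The basic tool is the scaled Motzkin--Straus bound: for any graph $H$ and any $z\ge 0$ one has $z^T(A_H+I)z\ge (\sum_i z_i)^2/\alpha(H)$, with minimum $1/\alpha(H)$ attained over the simplex; this follows from the identity $z^T(A_H+I)z=1-z^TA_{\bar H}z$ valid on the simplex (as $J=I+A_H+A_{\bar H}$) applied to the complement $\bar H$.

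Writing $s_h=\sum_{i\in V_h}x_i$ and $\alpha_h=\alpha(G[V_h])$, and using that independence numbers add over components ($\alpha(G[S])=\sum_h\alpha_h\le\alpha$), I would chain
\[
x^T(A_G+I)x=\sum_h x_{V_h}^T(A_{G[V_h]}+I)x_{V_h}\ \ge\ \sum_h \frac{s_h^2}{\alpha_h}\ \ge\ \frac{(\sum_h s_h)^2}{\sum_h\alpha_h}=\frac{1}{\alpha(G[S])}\ \ge\ \frac1\alpha ,
\]
the middle step being Cauchy--Schwarz. This reproves copositivity and, more usefully, shows that $x^TM_Gx=0$ forces equality throughout: (1) $\alpha(G[S])=\alpha$; (2) Cauchy--Schwarz equality, i.e.\ $s_h/\alpha_h$ is constant, whence $s_h=\alpha_h/\alpha$; and (3) each normalized restriction $x_{V_h}/s_h$ is a full-support minimizer of $y^T(A_{G[V_h]}+I)y$ over $\Delta_{V_h}$ attaining $1/\alpha_h$.

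The crux is to upgrade (3) to ``$V_h$ is a clique''. Since each $G[V_h]$ is connected, I would prove the self-contained claim: \emph{a connected graph $H$ admitting a full-support minimizer $z\in\Delta$ of $y^T(A_H+I)y$ is complete.} At such an interior minimizer first-order optimality gives $(A_H+I)z=\mu\mathbf 1$ with $\mu=z^T(A_H+I)z=1/\alpha(H)$, i.e.\ $z_i+\sum_{j\sim i}z_j=1/\alpha(H)$ for every $i$. Summing this over a maximum independent set $T$ (all of whose neighbours lie outside $T$) and comparing with $\sum_l z_l=1$ yields $\sum_{j\notin T}(\deg_T(j)-1)z_j=0$; since $T$ is maximal every $j\notin T$ has $\deg_T(j)\ge1$, and $z_j>0$ then forces $\deg_T(j)=1$ for all $j\notin T$ and all maximum independent sets $T$. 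If $H$ were connected but not complete it would contain two non-adjacent vertices $a,b$ with a common neighbour $c$ (take the endpoints $p_0,p_2$ of a shortest path through any non-edge); extending $\{a,b\}$ to a maximum independent set $T$ gives $c\notin T$ with $\deg_T(c)\ge2$, a contradiction. Hence each $V_h$ is a clique, $\alpha_h=1$, so (1) gives $k=\sum_h\alpha_h=\alpha$ and (2) gives $s_h=1/\alpha$; this is exactly the asserted description, and the converse direction is the direct computation $x^T(A_G+I)x=\sum_h s_h^2=\alpha\cdot(1/\alpha)^2=1/\alpha$ when the $V_h$ are cliques each carrying mass $1/\alpha$.

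Finally, for the ``in addition'' part, any edge $\{i,j\}$ with $i,j\in S$ lies inside a single clique $V_h$, because $G[S]$ has no edges between components. Choosing one vertex from each of the $k-1=\alpha-1$ other components yields an independent set $S'$ of size $\alpha-1$ that is non-adjacent to every vertex of $V_h$; then $S'\cup\{i\}$ and $S'\cup\{j\}$ are both stable of size $\alpha$, so $\{i,j\}$ is critical. I expect the main obstacle to be the clique claim for the components, that is, ruling out ``spurious'' interior minimizers of the Motzkin--Straus form; the degree-counting identity together with the shortest-path/distance-two argument is the decisive ingredient, whereas the surrounding reduction is routine Cauchy--Schwarz bookkeeping.
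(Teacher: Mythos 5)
Your overall route is sound and, since the paper quotes this theorem from \citet{LV21a} without reproducing a proof, there is no in-paper argument to compare against line by line; on its own merits your reduction is the natural one: the normalization $x^TM_Gx=0\iff x^T(A_G+I)x=1/\alpha$, the splitting over the components of $G[S]$, the weighted Motzkin--Straus bound with Cauchy--Schwarz forcing $\alpha(G[S])=\alpha$ and $s_h=\alpha_h/\alpha$, the KKT stationarity $(A_H+I)z=\tfrac{1}{\alpha(H)}\mathbf 1$ at a full-support minimizer, and the criticality argument via one representative from each of the other $\alpha-1$ components are all correct.

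There is, however, one genuinely unjustified step: in the clique claim you write ``extending $\{a,b\}$ to a \emph{maximum} independent set $T$.'' An independent pair always extends to a \emph{maximal} independent set, but not in general to a maximum one: in the path $P_5$ with vertices $1\text{--}2\text{--}3\text{--}4\text{--}5$ the unique maximum independent set is $\{1,3,5\}$, so the nonadjacent pair $\{2,4\}$ (with common neighbour $3$) lies in no maximum independent set. Since your identity $\sum_{j\notin T}(\deg_T(j)-1)z_j=0$ was derived only for maximum $T$ (you used $|T|=\alpha(H)$ to get the right-hand side $1$), the contradiction cannot be launched from an arbitrary distance-two pair as written. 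The repair is short and stays inside your own computation: take $T\supseteq\{a,b\}$ merely maximal, and sum the stationarity equation $z_i+\sum_{j\sim i}z_j=1/\alpha(H)$ over $i\in T$ to get
\[
\frac{|T|}{\alpha(H)}=\sum_{i\in T}z_i+\sum_{j\notin T}\deg_T(j)\,z_j\;\ge\;\sum_{\ell}z_\ell+z_c\;=\;1+z_c\;>\;1,
\]
using $\deg_T(j)\ge 1$ for all $j\notin T$ by maximality, $\deg_T(c)\ge 2$ since $c$ is adjacent to both $a,b\in T$, and full support of $z$; this gives $|T|>\alpha(H)$, a contradiction. (Equivalently: running the same summation with a general maximal $T$ shows $|T|/\alpha(H)-1=\sum_{j\notin T}(\deg_T(j)-1)z_j\ge 0$, hence under your hypothesis every maximal independent set is maximum, after which your original sentence becomes legitimate.) With that one step fixed, the remainder of the proposal — the decomposition, the equality analysis, the converse computation $\sum_h s_h^2=1/\alpha$, and the criticality of edges inside $S$ via the paper's stated characterization of critical edges — checks out.
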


\subsection{Certifying membership in the cones $\CQ^{(r)}$}\label{sect-5.1}
We recall the definition of the cones $\CQ_n^{(r)}$ in relation (\ref{eqQr}): A symmetric matrix $M\in \CS^n$ belongs to $\CQ_n^{(r)}$ if
\begin{align*}
\Big(\sum_{i=1}^{n}x_i\Big)^{r} x^TMx =\sum_{\substack{\beta\in \mathbb{N}^n, \\ |\beta|= r,r+2} }\sigma_\beta x^\beta
\end{align*}
for some  $\sigma_\beta\in \Sigma_{r+2-|\beta|}$.
Observe that we can assume that, in the decomposition, the monomials $x^\beta$ with $|\beta|=r+2$ are square-free. Otherwise, it can be moved to a term of the form $\sigma_\beta x^\beta$ with $|\beta|=r$. The sums of squares  $\sigma_\beta$ corresponding to the monomials $x^\beta$ with $|\beta|=r$ have degree 2, and thus take the form $x^T P_{\beta}x$ for some $n\times n$ positive semidefinite matrix $P_\beta$.

In summary, $M\in \CQ_n^{(r)}$ if, for every $\beta\in \mathbb{N}^n$ with $|\beta|=r$, there exist positive semidefinite matrices $P_\beta$, and, for every $A\subseteq [n]$ with $|A|=r+2$, there exist nonnegative scalars $c_A$ such that
\begin{align}\label{qr-equation}
\Big(\sum_{i=1}^{n}x_i\Big)^{r} x^TMx = \sum_{\beta\in \mathbb{N}^n_r} x^\beta x^TP_\beta x + \sum_{\substack{A\subseteq [n]\\ |A|=r+2}} c_Ax^A,
\end{align}
 where $x^A := \prod_{i\in A}x_i$. We say that $P =  (P_{\beta})_{\beta \in \mathbb{N}_{r}^n}$ is a {\em $\qrcert$} for $M$ if there exist some scalars $c_A\geq 0$ for $A\subseteq [n]$ with $|A|=r+2$ for which Equation (\ref{qr-equation}) holds. The notion of $\qrcert$ is a generalization of the notions of $\CK^{(0)}$ and $\CK^{(1)}$-certificates (recall that $\CK^{(0)}=\CQ^{(0)}$ and $\CK^{(1)}=\CQ^{(1)}$) introduced by Laurent and Vargas in~\cite{LV21b}.

 We now show a result about the structure of the kernel of the matrices in a $\qrcert$.
 \begin{lemma}\label{lemma-ncert} Let $P =  (P_{\beta})_{\beta \in \mathbb{N}_{r}^n}$ be a $\qrcert$ for $M$ and let $a\in \mathbb{R}_+^n$ be such that $a^TMa=0$. Then, for all $\beta \in \mathbb{N}_{r}^n$ such that  $\supp(\beta)\subseteq \supp (a)$ we have $P_\beta a = 0$.
\end{lemma}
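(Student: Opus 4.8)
The plan is to evaluate the defining certificate identity \eqref{qr-equation} at the point $x = a$ and then exploit the nonnegativity of $a$ together with the positive semidefiniteness of the blocks $P_\beta$. First I would substitute $x = a$ into \eqref{qr-equation}. Since this is a polynomial identity it holds at every real point, and because $a^T M a = 0$ the left-hand side vanishes, leaving
\[
0 = \sum_{\beta\in \mathbb{N}^n_r} a^\beta\, a^T P_\beta a \;+\; \sum_{\substack{A\subseteq [n]\\ |A|=r+2}} c_A\, a^A.
\]

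Next I would observe that every summand on the right is nonnegative: since $a\in\mathbb{R}_+^n$, each monomial $a^\beta = \prod_i a_i^{\beta_i}$ and each $a^A = \prod_{i\in A} a_i$ is nonnegative; each $c_A \ge 0$ by the definition of a $\qrcert$; and each quadratic value $a^T P_\beta a \ge 0$ because $P_\beta \succeq 0$. A sum of nonnegative reals equal to zero forces every term to vanish, so in particular $a^\beta\, a^T P_\beta a = 0$ for every $\beta \in \mathbb{N}_r^n$. Now fix $\beta$ with $\supp(\beta) \subseteq \supp(a)$. Then $a^\beta = \prod_{i\in\supp(\beta)} a_i^{\beta_i} > 0$, since each such $a_i$ is strictly positive; cancelling this nonzero factor yields $a^T P_\beta a = 0$. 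Finally, using $P_\beta \succeq 0$ I would conclude $P_\beta a = 0$: writing $P_\beta = L^T L$, the equality $a^T P_\beta a = \|L a\|^2 = 0$ gives $La = 0$ and hence $P_\beta a = L^T L a = 0$, which is exactly the claim.

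There is no serious obstacle in this argument; the content is simply that evaluating an sos-type certificate at a zero $a$ of the form $x^T M x$ on $\mathbb{R}_+^n$ localizes which $P_\beta$ can contribute, and the support condition $\supp(\beta)\subseteq\supp(a)$ is precisely what makes the monomial prefactor $a^\beta$ strictly positive. The only points that require care are verifying that all right-hand terms are genuinely nonnegative (which uses $a\ge 0$, $c_A\ge 0$, and $P_\beta\succeq 0$ simultaneously) and the standard final step deducing $P_\beta a = 0$ from $a^T P_\beta a = 0$ for a positive semidefinite matrix.
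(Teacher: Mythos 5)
Your proof is correct and follows exactly the paper's argument: evaluate the certificate identity \eqref{qr-equation} at $x=a$, note the left-hand side vanishes since $a^TMa=0$, conclude each nonnegative term on the right must be zero, and use $a^\beta>0$ (from $\supp(\beta)\subseteq\supp(a)$) together with $P_\beta\succeq 0$ to get $P_\beta a=0$. No gaps; your factorization $P_\beta = L^TL$ at the end simply spells out the standard step the paper leaves implicit.
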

\begin{proof}
Let $c_A$ (for $A\subseteq [n]$ with $|A|=r+2$) be nonnegative scalars such that relation (\ref{qr-equation}) holds. By evaluating Equation (\ref{qr-equation}) at $a$, the left-hand side equals zero, and all terms on the right-hand side are nonnegative. Hence, every term on the right-hand side should vanish. In particular, if $\supp(\beta)\subseteq \supp$$(a)$, then $a^\beta > 0$. This implies that $a^T P_\beta a=0$. Hence, $P_\beta a=0$ as $P_\beta\succeq 0$. 
\end{proof}
\subsection{Graph matrices and $\nrank$}\label{sect-5.2}

In this section, we specialize the result of Lemma \ref{lemma-ncert} to the case of graph matrices $M_G$. As a result, we obtain a sufficient condition for lower bounding the $\nrank$.

We start with the following result that gives us information about the kernel of the matrices in the $\qrcert$s of the matrices $M_G$.
\begin{lemma}\label{main-lemma} 
Let $G=(V=[n], E)$ be a graph and let $r\geq 0$. Assume $\nrank(G)\leq r$, i.e., $M_G\in \CQ_n^{(r)}$. Let $P =  (P_{\beta})_{\beta \in \mathbb{N}_{r}^n}$  be a $\qrcert$ for $M_G$. Let $\beta \in \mathbb{N}_{r}^n$ and  let $C_1, C_2, \dots C_n$ be the columns $P_\beta$. Assume $S:=\supp(\beta)$ is stable in $G$ and $\alpha(G\setminus S^\perp)=\alpha(G)-|S|$. Then, for any critical edge $\{i,j\}$ of $G\setminus S^\perp$, we have $C_i=C_j$.
\end{lemma}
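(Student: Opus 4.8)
The plan is to manufacture, for the given critical edge $\{i,j\}$ of $G':=G\setminus S^\perp$, two nonnegative zeros of the form $x^TM_Gx$ whose supports both contain $S=\supp(\beta)$ and which differ in exactly the two coordinates $i$ and $j$. Once I have these, \cref{lemma-ncert} forces $P_\beta$ to annihilate both vectors, hence their difference, which is (a scalar multiple of) $\ve_i-\ve_j$; this immediately gives $C_i=C_j$.

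First I would unfold what it means for $\{i,j\}$ to be a critical edge of $G'$. By the characterization of critical edges recalled above, there is a set $T\subseteq V(G')$ with $T\cup\{i\}$ and $T\cup\{j\}$ both stable in $G'$ of size $\alpha(G')$; note in particular that $i,j\notin T$. Writing $\alpha=\alpha(G)$ and using the hypothesis $\alpha(G')=\alpha-|S|$, I would then check that $W_i:=S\cup T\cup\{i\}$ and $W_j:=S\cup T\cup\{j\}$ are \emph{maximum} stable sets of $G$. Indeed $S$ is stable by assumption, $T\cup\{i\}\subseteq V(G')=V\setminus S^\perp$ is stable, and crucially it has no edge to $S$ (any such edge would place a vertex of $G'$ in $S^\perp$); hence $W_i$ is stable. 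Since $S$ is disjoint from $V(G')$, its cardinality is $|S|+\alpha(G')=\alpha$, and the same holds for $W_j$. Moreover $i,j\notin S^\perp$, so $W_i$ contains $i$ but not $j$, $W_j$ contains $j$ but not $i$, and $W_i\setminus\{i\}=W_j\setminus\{j\}=S\cup T$.

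Next I would pass to the associated simplex points $a:=\tfrac1\alpha\chi^{W_i}$ and $b:=\tfrac1\alpha\chi^{W_j}$. Each lies in $\Delta_n$ (its entries sum to $|W_i|/\alpha=1$), and by \cref{minimizers-LV} applied to a stable set of size $\alpha$ (its connected components are singleton cliques, each carrying mass $\tfrac1\alpha$) we get $a^TM_Ga=b^TM_Gb=0$; equivalently this is the one-line computation $\alpha\cdot\tfrac1\alpha-1=0$ coming from $M_G=\alpha(A_G+I)-J$ and $\chi^{W_i\,T}J\chi^{W_i}=\alpha^2$. Since $\supp(\beta)=S\subseteq W_i=\supp(a)$ and likewise $S\subseteq\supp(b)$, \cref{lemma-ncert} yields $P_\beta a=0$ and $P_\beta b=0$. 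Subtracting gives $P_\beta(a-b)=0$, and as $a-b=\tfrac1\alpha(\chi^{\{i\}}-\chi^{\{j\}})$ we conclude $0=\alpha\,P_\beta(a-b)=C_i-C_j$, i.e.\ $C_i=C_j$.

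The computations here are elementary; the one point deserving care is the structural claim that $W_i$ and $W_j$ are stable of size exactly $\alpha$. This is precisely where both the stability of $S$ and the hypothesis $\alpha(G\setminus S^\perp)=\alpha-|S|$ enter: they guarantee that appending $S$ to a maximum stable set of $G\setminus S^\perp$ neither creates edges (because $G'$ omits all of $S^\perp$) nor loses optimal cardinality. Everything else is a direct application of \cref{lemma-ncert} together with the explicit description of the zeros of $x^TM_Gx$ from \cref{minimizers-LV}.
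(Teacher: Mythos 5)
Your proof is correct and follows essentially the same route as the paper: lift the two maximum stable sets $T\cup\{i\}$, $T\cup\{j\}$ of $G\setminus S^\perp$ to maximum stable sets of $G$ by adjoining $S$, apply \cref{minimizers-LV} to see their (scaled) indicator vectors are zeros of $x^TM_Gx$, and then use \cref{lemma-ncert} to put both in $\ker P_\beta$ and subtract. The only differences are cosmetic improvements: you normalize by $\tfrac1\alpha$ so that \cref{minimizers-LV} applies verbatim on $\Delta_n$ (the paper uses unnormalized indicator vectors, harmless by homogeneity), and you spell out the verification that $S\cup T\cup\{i\}$ is stable of size exactly $\alpha(G)$, which the paper merely asserts.
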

\begin{proof}
Since $\{i,j\}$ is critical in $G\setminus S^\perp$, then there exists $I\subseteq V$ such that $I\cup\{i\}$ and $I\cup\{j\}$ are stable of size $\alpha(G\setminus S^\perp)=\alpha(G)-|S|$ in $G\setminus S^\perp$. Then, $S\cup I \cup \{i\}$ and $S\cup I \cup\{j\}$ are stable of size $\alpha(G)$ in $G$.  Let $a=\chi^{S\cup I \cup \{i\}}$ and $b=\chi^{S\cup I \cup \{j\}}$ be the indicator vectors of  $S\cup I \cup \{i\}$ and $S\cup I \cup\{j\}$, respectively. Then, $a^TM_Ga=0$ and $b^TM_Gb=0$, in view of Theorem \ref{minimizers-LV}. Then, by Lemma \ref{lemma-ncert}, $a,b\in \ker P_\beta$, so that $a-b=\chi^{\{i\}}-\chi^{\{j\}}\in \ker P_\beta$. This implies $C_i=C_j$.
\end{proof}

Now, we state the main result of this section. For this, we introduce the graph
$$G_c=(V,E_c),$$
where $E_c=\{e\in E: e \text{ is critical in } G\}$ is the set of critical edges of $G$.
\begin{theorem}\label{theo-nrank}
Let $G=(V=[n],E)$ be a graph and let $S$ be a stable set of $G$ such that the following conditions hold:
\begin{description}
\item[(i)]$\alpha(G\setminus S^\perp)=\alpha(G)-|S|$.
\item[(ii)] For every subset $S'\subseteq S$ with $|S'|=|S|-2$ we have that the graph $(G\setminus S'^\perp)_c$ is connected.
 \end{description}
Then, we have $\nrank(G)\geq |S|-1$.
\end{theorem}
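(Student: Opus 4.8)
The plan is to argue by contraposition, assuming $\nrank(G) \le |S|-2$ and deriving a contradiction. So suppose $M_G \in \CQ_n^{(r)}$ with $r := |S|-2$, and fix a $\qrcert$ $P = (P_\beta)_{\beta \in \N_r^n}$ for $M_G$. The natural $\beta$ to exploit is one whose support is a size-$(|S|-2)$ subset $S' \subseteq S$, since $|S'| = r$ makes $\beta = \chi^{S'}$ (as a multidegree in $\N_r^n$) a legitimate index, and the associated matrix $P_{\beta}$ should encode strong constraints via \cref{main-lemma}. First I would verify that the hypotheses of \cref{main-lemma} transfer to the graph $G \setminus S'^\perp$: condition (i) gives $\alpha(G \setminus S^\perp) = \alpha(G) - |S|$, and I would check (this is routine since $S$ is stable and $S' \subseteq S$) that $\alpha(G \setminus S'^\perp) = \alpha(G) - |S'| = \alpha(G) - (|S|-2)$, so that $S'$ plays the role of the stable set in that lemma while still having room for two more vertices.

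Next I would apply \cref{main-lemma} to this $\beta$ with $\supp(\beta) = S'$. The lemma tells me that for \emph{every} critical edge $\{i,j\}$ of $G \setminus S'^\perp$, the corresponding columns of $P_\beta$ coincide: $C_i = C_j$. This is exactly where condition (ii) enters: it asserts that $(G \setminus S'^\perp)_c$, the graph of critical edges of $G \setminus S'^\perp$, is connected. Connectivity of the critical-edge graph lets me propagate the equalities $C_i = C_j$ along paths, concluding that \emph{all} columns $C_i$ of $P_\beta$ (indexed by vertices of $G \setminus S'^\perp$) are equal to one another. Equivalently, $P_\beta$ restricted to the vertex set of $G \setminus S'^\perp$ has all its columns identical, which forces $\chi^{\{i\}} - \chi^{\{j\}} \in \ker P_\beta$ for all such $i,j$, and more strongly pins down the structure of $P_\beta$ on that block.

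The final step is to turn this rigidity into a contradiction. Having all columns of $P_\beta$ (over $G \setminus S'^\perp$) equal, combined with $P_\beta \succeq 0$, should force $P_\beta$ to be (on the relevant block) a nonnegative multiple of an all-ones rank-one matrix $\vecO\vecO\Tr$, and then I would extract the coefficient of a specific monomial on both sides of the certificate equation \eqref{qr-equation} to reach an arithmetic impossibility. Concretely, I expect to evaluate or compare coefficients of a carefully chosen monomial $x^\gamma$ supported on $S'$ together with two non-adjacent vertices of $G \setminus S'^\perp$ that form a critical edge's endpoints: the left-hand side $(\sum_i x_i)^r x^T M_G x$ contributes a determined (and, crucially, \emph{negative}) coefficient there because the $-J$ part of $M_G$ produces $-1$ on non-edges, while the rank-one structure of $P_\beta$ together with the nonnegative $c_A x^A$ terms cannot reproduce a negative coefficient. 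That sign clash is the contradiction.

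\textbf{Main obstacle.} The crux I anticipate is the last step: making precise how the forced rank-one/all-ones structure of $P_\beta$ interacts with \emph{all} the other certificate matrices $P_{\beta'}$ and the nonnegative scalars $c_A$ when reading off a single monomial's coefficient. Unlike \cref{main-lemma}, which is a clean kernel statement about one $P_\beta$, the contradiction requires controlling the full sum in \eqref{qr-equation} at a chosen monomial, and several $\beta'$ may contribute to it. I would manage this by choosing the target monomial so that $S'$ is ``used up'' by the factor $x^{S'}$, leaving only $P_{\beta}$ (and possibly the $c_A$ terms, which are nonnegative and hence cannot supply the needed negativity) as relevant contributors; verifying that no other $P_{\beta'}$ with a different support can contribute to exactly that monomial is the delicate bookkeeping that the proof must pin down.
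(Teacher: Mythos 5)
Your overall route is the paper's route: assume $M_G\in\CQ_n^{(r)}$ with $r=|S|-2$, take a $\qrcert$, use \cref{main-lemma} together with connectivity of the critical-edge graph to force equal columns in the certificate matrices indexed by subsets of $S$, and derive a sign contradiction by reading off one monomial's coefficient in \eqref{qr-equation}. The genuine gap is the final step you flag as the crux, and your proposed resolution points the wrong way. The paper's choice of monomial is $x^S$ itself (square-free, degree $r+2$): on the left-hand side its coefficient is strictly negative because $S$ is stable, so every quadratic contribution comes from the $-J$ part of $M_G$. On the right-hand side it is \emph{false} that only one $P_\beta$ contributes: every splitting $S=S'\cup\{i,j\}$ with $|S'|=r$ contributes $2(P_{S'})_{ij}$, so $\binom{|S|}{2}$ different certificate matrices appear at once, plus $c_S\geq 0$. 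But nothing needs to be excluded --- this is precisely why hypothesis (ii) is quantified over \emph{every} $S'\subseteq S$ with $|S'|=|S|-2$: for each such $S'$, condition (i) passes down to give $\alpha(G\setminus S'^\perp)=\alpha(G)-|S'|$, \cref{main-lemma} plus connectedness of $(G\setminus S'^\perp)_c$ makes all columns of $P_{S'}$ indexed by $V(G\setminus S'^\perp)$ identical, and since the complementary pair $\{i,j\}=S\setminus S'$ lies in $G\setminus S'^\perp$ ($S$ being stable), one gets $(P_{S'})_{ij}=(P_{S'})_{ii}\geq 0$ from $P_{S'}\succeq 0$. Hence the entire right-hand coefficient is nonnegative and the contradiction is immediate, with no bookkeeping to isolate a single $\beta$. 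Your vaguer alternative --- a monomial on $S'$ together with an arbitrary non-adjacent pair of $G\setminus S'^\perp$ --- would not work as stated, since other splittings of that monomial's support involve index sets not contained in $S$, where hypotheses (i) and (ii) give you nothing.

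Two smaller corrections. First, you do not need (and cannot in general conclude) that $P_\beta$ is a nonnegative multiple of an all-ones rank-one matrix: equal columns on the block indexed by $V(G\setminus S'^\perp)$ say nothing about rows and columns indexed by $S'^\perp$, and the weaker consequence $(P_{S'})_{ij}=(P_{S'})_{ii}\geq 0$ is all the proof uses. Second, your phrase ``two non-adjacent vertices \ldots{} that form a critical edge's endpoints'' is self-contradictory --- the pair $\{i,j\}$ is a non-edge of $G$; what the argument needs is that $i$ and $j$ are joined by a \emph{path} of critical edges in $G\setminus S'^\perp$, which is exactly what connectivity in condition (ii) supplies.
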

\begin{proof}
We show that $M_G \notin \CQ_n^{|S|-2}$ by contradiction. We set $|S|-2=r$.  Assume $M_G\in \CQ_n^{(r)}$, and let $P =  (P_{\beta})_{\beta \in \mathbb{N}_{r}^n}$  be a $\qrcert$ for $M_G$. Then, there exist scalars $c_A\geq 0$ (for $A\subseteq [n]$, with $|A|=r+2$)
such that the following equation holds:
\begin{align}\label{eq-Mg}
\Big(\sum_{i=1}^{n}x_i\Big)^{r} x^TM_Gx = \sum_{\beta\in \mathbb{N}_r^n} x^\beta x^TP_\beta x + \sum_{\substack{S\subseteq [n]\\ |A|=r+2}} c_Ax^A.
\end{align}
We will reach a contradiction by comparing the coefficient of $x^S$ in at both sides of Equation~(\ref{eq-Mg}). On the left-hand side, the coefficient is $-(r+2)(r+1)<0$. On the right-hand side, the coefficient of $x^S$ is
 $$\sum_{\substack{S'\subseteq V \\ S'\cup\{i,j\}=S}} 2(P_{S'})_{ij} + c_S.$$
We will show that all terms in the first summation are nonnegative.  Let $S'\subseteq S$, with $S'\cup\{i,j\}=S$. Observe that $\alpha(G\setminus S'^\perp)=\alpha(G)-|S'|$, because $\alpha(G\setminus S^\perp)=\alpha(G)-|S|$ and $S' \subseteq S$. By Lemma \ref{main-lemma}, if $\{v_1,v_2\}$ is a critical edge of $G\setminus S'^\perp$, then the columns of $P_{S'}$ indexed by $v_1$ and $v_2$ are equal. Using that $(G\setminus S'^\perp)_c$ is connected, we obtain that all columns of $P_{S'}$ indexed by vertices of $G\setminus S'^\perp$ are identical. In particular, the columns indexed by $i$ and $j$ are equal. This implies that $(P_{S'})_{ij}=(P_{S'})_{ii}$, which is nonnegative as $P_{S'}\succeq 0$. Using that $c_S\geq 0$, we reach a contradiction as the coefficient of $x^S$ on the right-hand side is positive while on the left-hand side it is negative.
\end{proof}

\section{Examples: Separating $\nnrank$ and $\nrank$}\label{sec-separation}
In this section, we consider two classes of graphs introduced, respectively, by  Dobre and Vera in \cite{Dobre_Vera_2015}, and by Vargas in \cite{LFV-thesis}. We show that all graphs in these classes satisfy the following two properties: 1)  $\nrank(G)$ is high and 2) $\nnrank(G)=1$.

\subsection*{Graphs $G_k$}
Dobre and Vera \cite{Dobre_Vera_2015} defined the following class of graphs $G_k$ for $k\geq 1$.
\begin{definition}
Let $K_{k+1,k+1}$ be the complete bipartite graph with bipartition $(U,V)$ with $U=\{u_0, u_1, \dots, u_k\}$ and $V=\{v_0,v_1, \dots, v_k\}$. The graph $G_{k}$ is obtained by adding a node $w_i$ in-between the edge $\{u_i,v_i\}$ for all $i=1,2,\dots, k$; that, is deleting the edge $\{u_i,v_i\}$ and adding the edges $\{u_i,w_i\}$ and $\{v_i,w_i\}$. We show $G_2$ and $G_k$ in Figure \ref{graphs-gk}.
\end{definition}

It was observed in \cite{Dobre_Vera_2015} that $\alpha(G_k)=k+1$, and it was conjectured that $\nrank(G_k)>k-1$. We show that $\nrank(G_k)>k-2$ as an application of Theorem \ref{theo-nrank}. Additionally, we show that $\nnrank(G_k)=1$ for all $k\geq 1$.
\begin{theorem}
Let $k\geq 1$ be an integer. Then, we have
\begin{description}
\item [(i)]  $\nrank(G_k)\geq  k-1$, i.e., $\nu^{(k-2)}(G_k)>\alpha(G_k)$.
\item [(ii)] $\nnrank(G_k)=1$.
\end{description}
\end{theorem}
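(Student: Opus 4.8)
The plan is to prove (i) by a single application of \cref{theo-nrank} and (ii) by combining the lower bound implicit in (i) with one application of \cref{prop-lrank1}. The right auxiliary set is different in the two parts: in (i) I take $S=\{w_1,\dots,w_k\}$, in (ii) I take $S=U\cup V$.

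For (i), I would use $S=\{w_1,\dots,w_k\}$, which is stable (the $w_i$ are pairwise non-adjacent) and has $|S|=k$. Hypothesis (i) of \cref{theo-nrank} is immediate: since $w_i^\perp=\{w_i,u_i,v_i\}$, we have $S^\perp=\{w_i,u_i,v_i:1\le i\le k\}$, so $G_k\setminus S^\perp$ is the single edge $\{u_0,v_0\}$, giving $\alpha(G_k\setminus S^\perp)=1=(k+1)-k=\alpha(G_k)-|S|$. With $|S|=k$, \cref{theo-nrank} then yields $\nrank(G_k)\ge k-1$, which is exactly (i) (the case $k=1$ being vacuous).

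The crux of (i) is hypothesis (ii): for every $S'=S\setminus\{w_a,w_b\}$ I must show $(G_k\setminus S'^\perp)_c$ is connected. By the symmetry of $G_k$ permuting the indices $1,\dots,k$ it suffices to treat one pair $\{a,b\}$. Here $S'^\perp$ deletes every gadget except those of indices $a,b$ together with the two hubs $u_0,v_0$, so $H:=G_k\setminus S'^\perp$ is the explicit $8$-vertex graph on $\{u_0,v_0,u_a,v_a,w_a,u_b,v_b,w_b\}$; it is the union of the two induced $5$-cycles $P_a=(v_0,u_a,w_a,v_a,u_0)$ and $P_b=(v_0,u_b,w_b,v_b,u_0)$ glued along the edge $\{u_0,v_0\}$, plus the two cross edges $\{u_a,v_b\},\{u_b,v_a\}$. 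Since $\alpha(H)=\alpha(G_k)-|S'|=3$, I would enumerate the ten maximum stable sets of $H$ and, using that an edge $\{x,y\}$ is critical exactly when two maximum stable sets differ only in $x$ versus $y$, verify that every edge of each pentagon $P_a,P_b$ is critical in $H$ (the two cross edges turn out not to be, but this is irrelevant). As $P_a\cup P_b$ already spans all eight vertices and shares the edge $\{u_0,v_0\}$, the critical graph $(H)_c$ is connected, establishing (ii). This enumeration and criticality check is the main technical obstacle; everything else is bookkeeping.

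For (ii), the lower bound $\nnrank(G_k)\ge 1$ follows from $\tilde\nu^{(0)}(G_k)=\nu^{(0)}(G_k)>\alpha(G_k)$: for $k\ge 2$ part (i) forces $\nrank(G_k)\ge k-1\ge 1$, hence $\nu^{(0)}(G_k)\neq\alpha(G_k)$ and so $\nu^{(0)}(G_k)>\alpha(G_k)$, while $G_1=C_5$ has $\nu^{(0)}(C_5)=\sqrt5>2$. For the upper bound $\nnrank(G_k)\le 1$ I would apply \cref{prop-lrank1} with $S=U\cup V=\{u_0,\dots,u_k,v_0,\dots,v_k\}$, i.e.\ all vertices except the $w_i$. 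Condition (i) holds because each $G_k\setminus i^\perp$ with $i\in S$ is perfect: deleting $u_0^\perp$ (resp.\ $v_0^\perp$) leaves the disjoint union of the $k$ edges $\{u_i,w_i\}$ (resp.\ $\{v_i,w_i\}$), and deleting $u_j^\perp$ or $v_j^\perp$ for $j\ge1$ leaves a tree; perfect graphs have $\nnrank 0$. Condition (ii) holds because $V\setminus S=\{w_1,\dots,w_k\}$ and distinct $w_i,w_j$ have disjoint neighbourhoods $\{u_i,v_i\},\{u_j,v_j\}$, so no vertex of $S$ is adjacent to two non-adjacent $w$'s. Hence $\nnrank(G_k)\le 1$, and with the lower bound, $\nnrank(G_k)=1$. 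I expect the single genuinely nontrivial step to be hypothesis (ii) of part (i); the cleanest route through it is the ``two pentagons sharing an edge'' picture, which reduces the task to checking that each edge of an induced $C_5$ in $H$ is critical, while all of part (ii) reduces to the structural observations that the residual graphs $G_k\setminus i^\perp$ are perfect and that the $w_i$ have pairwise disjoint neighbourhoods.
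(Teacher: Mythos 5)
Your proposal is correct and follows essentially the same route as the paper: part (i) applies \cref{theo-nrank} with $S=\{w_1,\dots,w_k\}$, reduces every $S'$ with $|S'|=k-2$ to the same $8$-vertex graph (the paper's $G_2$, your ``two pentagons glued along $\{u_0,v_0\}$''), and verifies connectivity of the critical-edge graph, while part (ii) applies \cref{prop-lrank1} with $S=U\cup V$ using that each $G_k\setminus i^\perp$ is acyclic, hence perfect. If anything, your write-up is slightly more complete than the paper's: you make the lower bound $\nnrank(G_k)\ge 1$ explicit (including the $k=1$ case $G_1=C_5$), and your enumeration of the ten maximum stable sets of $H$ substantiates the criticality claims that the paper merely asserts (its list of critical edges in $G_2$ even contains a duplicated/typo'd edge).
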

\begin{proof}
For part (i) we use Theorem \ref{theo-nrank}. We set $S=\{w_1, w_2, \dots, w_k\}$. The first condition is satisfied, as we have $\alpha(G_k\setminus S^\perp)=1 = \alpha(G_k)-|S|$. Now, we check the second condition. Note that, for any subset $S'\subseteq S$ with $|S'|=k-2$, the graph $G\setminus S'^\perp$ is isomorphic to $G_2$. We observe that the edges $\{u_0,v_0\}$, $\{u_1, w_1\}$, $\{w_1, v_1\}$, $\{u_2, w_2\}$, $\{w_2, v_2\}$, $\{u_0, v_2\}$ and $\{v_2,u_0\}$ are critical in $G_2$. Therefore, $(G_2)_c$ is connected. Hence, by Theorem~\ref{theo-nrank}, we obtain that $\nrank(G_k)\geq k-1$.

To show part (ii), we use Proposition \ref{prop-lrank1}. We set $S= \{u_0, u_1, \dots, u_k, v_0, v_1, v_2, \dots, v_k\}$. To check condition (i), we observe that, for all $i\in S$, we have that the graph $G_k\setminus i^\perp$ is acyclic, and therefore it is perfect. Hence, $\nnrank(G\setminus i^\perp)=0$ for all $i\in S$. It is straightforward to check condition (ii), completing the proof.
\end{proof}

\definecolor{ududff}{rgb}{0.30196078431372547,0.30196078431372547,1}
\definecolor{ududff}{rgb}{0.30196078431372547,0.30196078431372547,1}
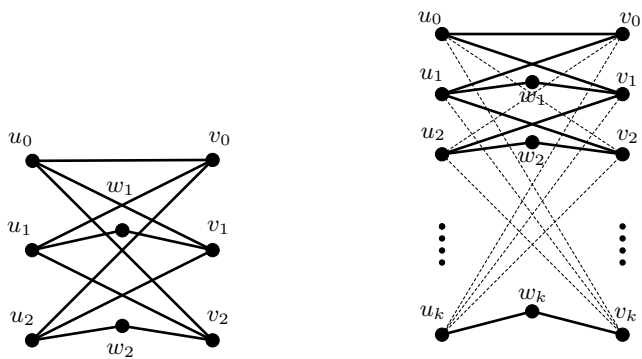
\begin{figure}[h]
\begin{tikzpicture}[line cap=round,line join=round,=triangle 45,x=.6cm,y=.6cm]
\clip(-9.26267376859976,1.4014566326578854) rectangle (-1.591894779446695,6.7389188190268845);
\draw [line width=1pt] (-7.99,5.98)-- (-4,6);
\draw [line width=1pt] (-7.99,5.98)-- (-4,2);
\draw [line width=1pt] (-8,2)-- (-4,6);
\draw [line width=1pt] (-8,4)-- (-6,4.44);
\draw [line width=1pt] (-6,4.44)-- (-4,4);
\draw [line width=1pt] (-8,2)-- (-6,2.32);
\draw [line width=1pt] (-6,2.32)-- (-4,2);
\draw [line width=1pt] (-8,4)-- (-4,2);
\draw [line width=1pt] (-8,2)-- (-4,4);
\draw [line width=1pt] (-8,-3)-- (-4,-3);
\draw [line width=1pt] (-8,-4)-- (-6,-3.8);
\draw [line width=1pt] (-6,-3.8)-- (-4,-4);
\draw [line width=1pt] (-8,-5)-- (-6,-4.8);
\draw [line width=1pt] (-6,-4.8)-- (-4,-5);
\draw [line width=1pt] (-8,-8)-- (-6.003143181611197,-7.612423467089333);
\draw [line width=1pt] (-6.003143181611197,-7.612423467089333)-- (-4,-8);
\draw [line width=1pt] (-8,-3)-- (-4,-4);
\draw [line width=1pt] (-8,-4)-- (-4,-3);
\draw [line width=1pt] (-8,-5)-- (-4,-4);
\draw [line width=1pt] (-8,-4)-- (-4,-5);
\draw [line width=1pt] (-7.99,5.98) --(-4,4) ;
\draw [line width=1pt] (-4,6)--(-8,4) ;

\draw [line width=0.4pt,dash pattern=on 1pt off 1pt] (-8,-3)-- (-4,-8);
\draw [line width=0.8pt,dash pattern=on 1pt off 1pt] (-4,-3)-- (-8,-8);
\draw [line width=0.8pt,dash pattern=on 1pt off 1pt] (-4,-4)-- (-8,-8);
\draw [line width=0.8pt,dash pattern=on 1pt off 1pt] (-8,-5)-- (-4,-8);
\draw [line width=0.8pt,dash pattern=on 1pt off 1pt] (-8,-4)-- (-4,-8);
%\draw [line width=0.8pt,dash pattern=on 1pt off 1pt] (-8,-5)-- (-4,-10);

\begin{scriptsize}
\draw [fill=black] (-7.99,5.98) circle (2.5pt);
\draw[color=black] (-8.24913928239037,6.451943810826468) node {$u_0$};
\draw [fill=black] (-8,4) circle (2.5pt);
\draw[color=black] (-8.271014127416473,4.475916143468546) node {$u_1$};
\draw [fill=black] (-8,2) circle (2.5pt);
\draw[color=black] (-8.212681207346867,2.4925968611019244) node {$u_2$};
\draw [fill=black] (-4,6) circle (2.5pt);
\draw[color=black] (-3.837712202126489,6.473818655852571) node {$v_0$};
\draw [fill=black] (-4,4) circle (2.5pt);
\draw[color=black] (-3.866878662161292,4.497790988494649) node {$v_1$};
\draw [fill=black] (-4,2) circle (2.5pt);
\draw[color=black] (-3.874170277169992,2.507180091119326) node {$v_2$};
\draw [fill=black] (-6,4.44) circle (2.5pt);
\draw[color=black] (-6.039779934754081,5.371745964060203) node {$w_1$};
\draw [fill=black] (-6,2.32) circle (2.5pt);
\draw[color=black] (-6.032488319745379,1.771760841554367) node {$w_2$};
\draw [fill=ududff] (6,12) circle (2.5pt);
\draw[color=ududff] (-3.3783404565783495,6.70610655148773);
\draw [fill=ududff] (6,10) circle (2.5pt);
\draw[color=ududff] (-3.028342936160719,6.70610655148773) ;
\draw [fill=ududff] (6,6) circle (2.5pt);
\draw[color=ududff] (-2.91896871103021,6.684231706461628);
\draw [fill=ududff] (4.325352867337781,-0.43918184673464467) circle (2.5pt);
\draw [fill=black] (-8,-3) circle (1pt);
\draw [fill=black] (-8,-5) circle (1pt);
\draw [fill=black] (-4,-3) circle (1pt);
\draw [fill=black] (-4,-5) circle (1pt);
\draw [fill=black] (-8,-8) circle (1pt);
\draw [fill=black] (-4,-8) circle (1pt);
\draw [fill=black] (-8,-4) circle (1pt);
\draw [fill=black] (-4,-4) circle (1pt);
\draw [fill=black] (-6,-3.8) circle (1pt);
\draw [fill=black] (-6,-4.8) circle (1pt);
\draw [fill=black] (-6.003143181611197,-7.612423467089333) circle (1pt);
\draw [fill=black] (-8,-6.2) circle (1pt);
\draw [fill=black] (-8,-6.4) circle (1pt);
\draw [fill=black] (-8,-6.8) circle (1pt);
\draw [fill=black] (-8,-6.6) circle (1pt);
\draw [fill=black] (-4,-6.2) circle (1pt);
\draw [fill=black] (-8,-6.2) circle (1pt);
\draw [fill=black] (-4,-6.4) circle (1pt);
\draw [fill=black] (-4,-6.6) circle (1pt);
\draw [fill=black] (-4,-6.8) circle (1pt);
\end{scriptsize}
\end{tikzpicture}
\begin{tikzpicture}[line cap=round,line join=round,=triangle 45,x=.6cm,y=.8cm]
\clip(-10.44623339079699,-8.543239234550395) rectangle (-1.6963974509522357,-2.4549503486506508);
\draw [line width=1pt] (-7.99,5.98)-- (-4,6);
\draw [line width=1pt] (-7.99,5.98)-- (-4,2);
\draw [line width=1pt] (-8,2)-- (-4,6);
\draw [line width=1pt] (-8,4)-- (-6,4.44);
\draw [line width=1pt] (-6,4.44)-- (-4,4);
\draw [line width=1pt] (-8,2)-- (-6,2.32);
\draw [line width=1pt] (-6,2.32)-- (-4,2);
\draw [line width=1pt] (-8,4)-- (-4,2);
\draw [line width=1pt] (-8,2)-- (-4,4);
\draw [line width=1pt] (-8,-3)-- (-4,-3);
\draw [line width=1pt] (-8,-4)-- (-6,-3.8);
\draw [line width=1pt] (-6,-3.8)-- (-4,-4);
\draw [line width=1pt] (-8,-5)-- (-6,-4.8);
\draw [line width=1pt] (-6,-4.8)-- (-4,-5);
\draw [line width=1pt] (-8,-8)-- (-6.003143181611197,-7.612423467089333);
\draw [line width=1pt] (-6.003143181611197,-7.612423467089333)-- (-4,-8);
\draw [line width=1pt] (-8,-3)-- (-4,-4);
\draw [line width=1pt] (-8,-4)-- (-4,-3);
\draw [line width=1pt] (-8,-5)-- (-4,-4);
\draw [line width=1pt] (-8,-4)-- (-4,-5);
\draw [line width=0.4pt,dash pattern=on 1pt off 1pt] (-8,-3)-- (-4,-8);
\draw [line width=0.4pt,dash pattern=on 1pt off 1pt] (-4,-3)-- (-8,-8);
\draw [line width=0.4pt,dash pattern=on 1pt off 1pt] (-4,-4)-- (-8,-8);
\draw [line width=0.4pt,dash pattern=on 1pt off 1pt] (-8,-5)-- (-4,-8);
\draw [line width=0.4pt,dash pattern=on 1pt off 1pt] (-8,-4)-- (-4,-8);
\draw [line width=0.4pt,dash pattern=on 1pt off 1pt] (-8,-8)-- (-4,-5);
\draw [line width=0.4pt,dash pattern=on 1pt off 1pt] (-8,-5)-- (-4,-3);
\draw [line width=0.4pt,dash pattern=on 1pt off 1pt] (-4,-5)-- (-8,-3);
\begin{scriptsize}
\draw [fill=black] (-7.99,5.98) circle (1pt);
\draw[color=black] (-6.994539555211844,-2.492378354096756);
\draw [fill=black] (-8,4) circle (1pt);
\draw[color=black] (-7.152568911539839,-2.492378354096756);
\draw [fill=black] (-8,2) circle (2.5pt);
\draw[color=black] (-7.260694260606361,-2.492378354096756);
\draw [fill=black] (-4,6) circle (2.5pt);
\draw[color=black] (-5.539006010085578,-2.492378354096756);
\draw [fill=black] (-4,4) circle (2.5pt);
\draw[color=black] (-5.547323344629157,-2.492378354096756);
\draw [fill=black] (-4,2) circle (2.5pt);
\draw[color=black] (-5.522371340998421,-2.492378354096756);
\draw [fill=black] (-6,4.44) circle (2.5pt);
\draw[color=black] (-6.2958834535512365,-2.492378354096756);
\draw [fill=black] (-6,2.32) circle (2.5pt);
\draw[color=black] (-6.3624221298998656,-2.492378354096756);
\draw [fill=ududff] (6,12) circle (2.5pt);
\draw[color=ududff] (-4.025251123154261,-2.492378354096756);
\draw [fill=ududff] (6,10) circle (2.5pt);
\draw[color=ududff] (-3.792365755934059,-2.492378354096756);
\draw [fill=ududff] (6,6) circle (2.5pt);
\draw[color=ududff] (-3.2351043415142886,-2.492378354096756);
\draw [fill=ududff] (4.325352867337781,-0.43918184673464467) circle (2.5pt);
\draw [fill=black] (-8,-3) circle (2.5pt);
\draw[color=black] (-8.258774405835801,-2.7335810558605433) node {$u_0$};
\draw [fill=black] (-8,-5) circle (2.5pt);
\draw[color=black] (-8.192235729487173,-4.663010684493791) node {$u_2$};
\draw [fill=black] (-4,-3) circle (2.5pt);
\draw[color=black] (-3.817317759564795,-2.7518023976655957) node {$v_0$};
\draw [fill=black] (-4,-5) circle (2.5pt);
\draw[color=black] (-3.8838564359134247,-4.703010684493791) node {$v_2$};
\draw [fill=black] (-8,-8) circle (2.5pt);
\draw[color=black] (-8.200553064030752,-7.632299116551453) node {$u_k$};
\draw [fill=black] (-4,-8) circle (2.5pt);
\draw[color=black] (-3.917125774087739,-7.6572511201821905) node {$v_k$};
\draw [fill=black] (-8,-4) circle (2.5pt);
\draw[color=black] (-8.225505067661487,-3.6815652083514832) node {$u_1$};
\draw [fill=black] (-4,-4) circle (2.5pt);
\draw[color=black] (-3.892173770457003,-3.7566132047207463) node {$v_1$};
\draw [fill=black] (-6,-3.8) circle (2.5pt);
\draw[color=black] (-5.979824740895247,-4.0653145102184323) node {$w_1$};
\draw [fill=black] (-6,-4.8) circle (2.5pt);
\draw[color=black] (-6.013094079069562,-5.063394655447898) node {$w_2$};
\draw [fill=black] (-6.003143181611197,-7.612423467089333) circle (2.5pt);
\draw[color=black] (-5.988142075438826,-7.349701727546825) node {$w_k$};
\draw [fill=black] (-8,-6.2) circle (1pt);
\draw [fill=black] (-8,-6.4) circle (1pt);
\draw [fill=black] (-8,-6.8) circle (1pt);
\draw [fill=black] (-8,-6.6) circle (1pt);
\draw [fill=black] (-4,-6.2) circle (1pt);
\draw [fill=black] (-8,-6.2) circle (1pt);
\draw [fill=black] (-4,-6.4) circle (1pt);
\draw [fill=black] (-4,-6.6) circle (1pt);
\draw [fill=black] (-4,-6.8) circle (1pt);
\end{scriptsize}
\end{tikzpicture}
\caption{Graphs $G_2$ and $G_k$}\label{graphs-gk}
\end{figure}
\subsection*{New class of graphs $L_k$}

We define the following class of graphs $L_k$. We start with a set of vertices $S_k=\{s_1, \dots, s_k\}$. For any pair of distinct nodes $s_i, s_j$ of $S$ we construct three extra nodes $a_{ij}$, $b_{ij}$, $c_{ij}$ and we construct the edges $\{s_i, a_{ij}\}$, $\{a_{ij}, b_{ij}\}$, $\{b_{ij}, s_j\}$, $\{s_j, c_{ij}\}$, $\{c_{ij}, s_i\}$ so that the nodes $s_i$, $a_{ij}$, $b_{ij}$, $s_j$, $c_{ij}$ form a 5-cycle. Finally, we construct a bipartite graph $K_{3,3}$ between the nodes $\{a_{ij}, b_{ij}, c_{ij}\}$ and $\{a_{lm}, b_{lm}, c_{lm}\}$ if $\{i,j\} \neq \{l,m\}$.
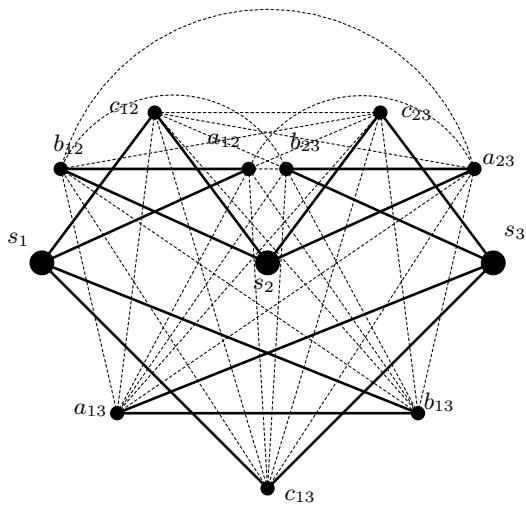
\begin{figure}[h!]
\begin{tikzpicture}[line cap=round,line join=round,=triangle 45,x=.5cm,y=.5cm]
\clip(-1.620697595216579,-8.078408518511932) rectangle (15.442714442414452,7.450199261766094);
\draw [line width=1pt] (5.5,2.5)-- (0,0);
\draw [line width=1pt] (6,0)-- (0.5,2.5);
\draw [line width=1pt] (0,0)-- (3,4);
\draw [line width=1pt] (6,0)-- (3,4);
\draw [line width=1pt] (0.5,2.5)-- (5.5,2.5);
\draw [line width=1pt] (6,0)-- (11.5,2.5);
\draw [line width=1pt] (11.5,2.5)-- (6.5,2.5);
\draw [line width=1pt] (6.5,2.5)-- (12,0);
\draw [line width=1pt] (9,4)-- (6,0);
\draw [line width=1pt] (9,4)-- (12,0);
\draw [line width=0.4pt,dash pattern=on 1pt off 1pt] (9,4)-- (3,4);
\draw [line width=1pt] (0,0)-- (10,-4);
\draw [line width=1pt] (12,0)-- (2,-4);
\draw [line width=1pt] (2,-4)-- (10,-4);
\draw [line width=1pt] (12,0)-- (6,-6);
\draw [line width=1pt] (6,-6)-- (0,0);
\draw [shift={(6,1.0637308395099503)},line width=0.4pt,dash pattern=on 1pt off 1pt]  plot[domain=0.25543543453778483:2.8861572190520084,variable=\t]({1*5.684440966478128*cos(\t r)+0*5.684440966478128*sin(\t r)},{0*5.684440966478128*cos(\t r)+1*5.684440966478128*sin(\t r)});
\draw [line width=0.4pt,dash pattern=on 1pt off 1pt] (5.5,2.5)-- (6.5,2.5);
\draw [line width=0.4pt,dash pattern=on 1pt off 1pt] (3,4)-- (2,-4);
\draw [line width=0.4pt,dash pattern=on 1pt off 1pt] (9,4)-- (10,-4);
\draw [line width=0.4pt,dash pattern=on 1pt off 1pt] (6,-6)-- (3,4);
\draw [line width=0.4pt,dash pattern=on 1pt off 1pt] (9,4)-- (6,-6);
\draw [line width=0.4pt,dash pattern=on 1pt off 1pt] (2,-4)-- (9,4);
\draw [line width=0.4pt,dash pattern=on 1pt off 1pt] (3,4)-- (10,-4);
\draw [line width=0.4pt,dash pattern=on 1pt off 1pt] (2,-4)-- (11.5,2.5);
\draw [line width=0.4pt,dash pattern=on 1pt off 1pt] (0.5,2.5)-- (2,-4);
\draw [line width=0.4pt,dash pattern=on 1pt off 1pt] (6,-6)-- (0.5,2.5);
\draw [line width=0.4pt,dash pattern=on 1pt off 1pt] (6,-6)-- (5.5,2.5);
\draw [line width=0.4pt,dash pattern=on 1pt off 1pt] (6,-6)-- (6.5,2.5);
\draw [line width=0.4pt,dash pattern=on 1pt off 1pt] (6,-6)-- (11.5,2.5);
\draw [line width=0.4pt,dash pattern=on 1pt off 1pt] (3,4)-- (6.5,2.5);
\draw [line width=0.4pt,dash pattern=on 1pt off 1pt] (9,4)-- (5.5,2.5);
\draw [line width=0.4pt,dash pattern=on 1pt off 1pt] (0.5,2.5)-- (9,4);
\draw [line width=0.4pt,dash pattern=on 1pt off 1pt] (3,4)-- (11.5,2.5);
\draw [line width=0.4pt,dash pattern=on 1pt off 1pt] (5.5,2.5)-- (2,-4);
\draw [line width=0.4pt,dash pattern=on 1pt off 1pt] (5.5,2.5)-- (10,-4);
\draw [line width=0.4pt,dash pattern=on 1pt off 1pt] (6.5,2.5)-- (2,-4);
\draw [line width=0.4pt,dash pattern=on 1pt off 1pt] (6.5,2.5)-- (10,-4);
\draw [shift={(3.5,1.1980513492228042)},line width=0.4pt,dash pattern=on 1pt off 1pt]  plot[domain=0.40945456110640277:2.7321380924833907,variable=\t]({1*3.270331831674052*cos(\t r)+0*3.270331831674052*sin(\t r)},{0*3.270331831674052*cos(\t r)+1*3.270331831674052*sin(\t r)});
\draw [shift={(8.5,1.1701532781760429)},line width=0.4pt,dash pattern=on 1pt off 1pt]  plot[domain=0.41725341098372776:2.7243392426060655,variable=\t]({1*3.281538100273395*cos(\t r)+0*3.281538100273395*sin(\t r)},{0*3.281538100273395*cos(\t r)+1*3.281538100273395*sin(\t r)});
\draw [line width=0.4pt,dash pattern=on 1pt off 1pt] (0.5,2.5)-- (10,-4);
\draw [line width=0.4pt,dash pattern=on 1pt off 1pt] (11.5,2.5)-- (10,-4);
\begin{scriptsize}
\draw [fill=black] (0,0) circle (4.5pt);
\draw[color=black] (-0.6275889581057782,0.6338627070510302) node {$s_1$};
\draw [fill=black] (6,0) circle (4.5pt);
\draw[color=black] (5.902852685925851,-0.5999995390563323) node {$s_2$};
\draw [fill=black] (12,0) circle (4.5pt);
\draw[color=black] (12.583765335580328,0.8144279137984491) node {$s_3$};
\draw [fill=black] (5.5,2.5) circle (2.5pt);
\draw[color=black] (4.8495556465659115,3.252058204888604) node {$a_{12}$};
\draw [fill=black] (0.5,2.5) circle (2.5pt);
\draw[color=black] (0.7266500924998592,3.161775601514895) node {$b_{12}$};
\draw [fill=black] (3,4) circle (2.5pt);
\draw[color=black] (2.2012659476037757,4.124790037501128) node {$c_{12}$};
\draw [fill=black] (6.5,2.5) circle (2.5pt);
\draw[color=black] (7.016338127534931,3.2219640037640347) node {$b_{23}$};
\draw [fill=black] (11.5,2.5) circle (2.5pt);
\draw[color=black] (12.162446519836353,2.7404567857709172) node {$a_{23}$};
\draw [fill=black] (9,4) circle (2.5pt);
\draw[color=black] (9.965569837742763,3.97431903187828) node {$c_{23}$};
\draw [fill=black] (2,-4) circle (2.5pt);
\draw[color=black] (1.298439913866684,-3.910361662759012) node {$a_{13}$};
\draw [fill=black] (10,-4) circle (2.5pt);
\draw[color=black] (10.567453860234158,-3.6997022548870238) node {$b_{13}$};
\draw [fill=black] (6,-6) circle (2.5pt);
\draw[color=black] (6.865867121912083,-6.197520948226319) node {$c_{13}$};
\end{scriptsize}
\end{tikzpicture}
\caption{Graph $L_3$}
\end{figure}

\begin{lemma}
Let $L_k$ be as above, then $\alpha(L_k)=k$.
\end{lemma}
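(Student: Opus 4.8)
The plan is to establish the two inequalities $\alpha(L_k) \ge k$ and $\alpha(L_k) \le k$ separately, relying only on the adjacency structure prescribed by the construction.

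For the lower bound, I would first observe that no two hub vertices $s_i, s_j$ are adjacent: the only edges incident to an $s$-vertex are the cycle-edges $\{s_i, a_{ij}\}$, $\{c_{ij}, s_i\}$, $\{b_{ij}, s_j\}$, $\{s_j, c_{ij}\}$ arising from each pair $\{i,j\}$, and none of these joins two $s$-vertices. Hence $S_k = \{s_1, \dots, s_k\}$ is a stable set, giving $\alpha(L_k) \ge k$.

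For the upper bound, let $I$ be a maximum independent set. The key structural remark is that for two distinct pairs $\{i,j\} \ne \{l,m\}$ the triples $\{a_{ij}, b_{ij}, c_{ij}\}$ and $\{a_{lm}, b_{lm}, c_{lm}\}$ are joined by a complete bipartite graph $K_{3,3}$; consequently any two vertices lying in different triples are adjacent, so $I$ can meet at most one triple. If $I$ meets no triple, then $I \subseteq S_k$ and $|I| \le k$. Otherwise $I$ meets a single triple $T_{ij} := \{a_{ij}, b_{ij}, c_{ij}\}$, so $I \subseteq (S_k \setminus \{s_i, s_j\}) \cup \{s_i, a_{ij}, b_{ij}, s_j, c_{ij}\}$, since the triple vertices are adjacent (among hubs) only to $s_i$ and $s_j$, leaving the remaining hubs $s_\ell$ ($\ell \ne i,j$) free to contribute at most $k-2$ vertices. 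The five vertices $s_i, a_{ij}, b_{ij}, s_j, c_{ij}$ induce exactly the $5$-cycle from the construction (inside the triple only $\{a_{ij}, b_{ij}\}$ is an edge, and $s_i, s_j$ are non-adjacent, so there are no chords), whence $I$ contains at most $\alpha(C_5) = 2$ of them. Combining, $|I| \le 2 + (k-2) = k$, and with the lower bound we conclude $\alpha(L_k) = k$.

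The verification is purely combinatorial, and the only point requiring care is confirming the three adjacency facts on which it rests: that hub vertices are pairwise non-adjacent, that each triple vertex is adjacent (among hubs) only to its two associated $s_i, s_j$, and that the induced subgraph on $\{s_i, s_j\} \cup T_{ij}$ is precisely an induced $C_5$. I do not anticipate a real obstacle here; the main risk is a miscount arising from overlooking that a single triple together with its two hubs can still contribute only $2$ independent vertices, exactly because those five vertices form an induced $C_5$.
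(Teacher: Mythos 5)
Your proof is correct and follows essentially the same route as the paper: both arguments use $S_k$ for the lower bound, use the $K_{3,3}$ joins to show a stable set meets at most one triple $\{a_{ij},b_{ij},c_{ij}\}$, and then bound the remainder via the induced $C_5$ on $\{s_i,a_{ij},b_{ij},s_j,c_{ij}\}$ together with the $k-2$ remaining hubs (the paper phrases this step as $|A|\leq \alpha(C_5\oplus \overline{K_{k-2}})=k$, which is exactly your $2+(k-2)$ count). The only cosmetic difference is that you treat the ``no triple met'' case explicitly, which the paper subsumes into the same reduction.
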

\begin{proof}
Note that $S_k$ is stable of size $k$ in $L_k$. We now show that there is no stable set of size $k+1$ in $L_k$. Let $A\subseteq V$ be a stable set in $L_k$. By construction, $A$ could contain elements of type $a_{ij}$ $b_{ij}$ and $c_{ij}$ from just one pair $(i,j)$. Assume that $(1,2)$ is such pair, so that $\{a_{ij}, b_{ij}, c_{ij}\} \cap A=\emptyset$ if $(i,j)\neq (1,2)$. Hence, $A$ is stable in the graph obtained by deleting all nodes $a_{ij}, b_{ij}$ and $c_{ij}$ for $(i,j)\neq (1,2)$, which is isomorphic to the graph $C_5 \oplus \overline{K_{k-2}}$. Hence, $|A|\leq \alpha(C_5\oplus \overline{K_{k-2}}) = k$.
\end{proof}

We use the following result.
\begin{lemma}\label{lemma-sub}
Let $H, G$ be two graphs with $V(H)=V(G)$, $E(H)\subseteq E(G)$, and $\alpha(H)=\alpha(G)$. Then, $\nnrank(G)\leq \nnrank(H)$.
\end{lemma}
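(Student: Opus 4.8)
The plan is to produce $M_G$ explicitly as a member of $\tilde{\CQ}_n^{(\nnrank(H))}$, where $n=|V(G)|=|V(H)|$; via the correspondence $r\ge \nnrank(G)\Longleftrightarrow M_G\in\tilde{\CQ}_n^{(r)}$ (the $\tilde\nu$-analogue of the equivalence recorded earlier for $\nrank$), this at once gives $\nnrank(G)\le \nnrank(H)$. The starting point is an elementary matrix identity made available by the hypothesis $\alpha:=\alpha(H)=\alpha(G)$. Writing $M_G=\alpha(A_G+I)-J$ and $M_H=\alpha(A_H+I)-J$, the $-J$ terms and the $\alpha I$ terms cancel in the difference, leaving
\[
M_G = M_H + \alpha\,(A_G-A_H).
\]

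First I would verify that $N:=\alpha(A_G-A_H)$ lies in the lowest level $\tilde{\CQ}_n^{(0)}$. Because $E(H)\subseteq E(G)$, every edge of $H$ is an edge of $G$, so $A_G-A_H$ is a symmetric $0$--$1$ matrix with zero diagonal; in particular $N\ge 0$ entrywise, hence $N=0+N\in\tilde{\CQ}_n^{(0)}=\{P+N':P\succeq 0,\ N'\ge 0\}$. With $M_H\in\tilde{\CQ}_n^{(r)}$ for $r:=\nnrank(H)$ and $N\in\tilde{\CQ}_n^{(0)}$ in hand, the conclusion follows from the additivity of the level under sums, \cref{lemma-struc-new}~ii): $M_G=M_H+N\in\tilde{\CQ}_n^{(r+0)}=\tilde{\CQ}_n^{(r)}$.

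I do not expect a genuine technical obstacle; the content of the argument is entirely front-loaded into the structural robustness of the cones $\tilde{\CQ}_n^{(r)}$. The only points needing care are conceptual: the hypothesis $\alpha(H)=\alpha(G)$ is used precisely (and only) to make the clean identity $M_G=M_H+\alpha(A_G-A_H)$ hold, and the closure of $\tilde{\CQ}$ under adding a nonnegative matrix (via \cref{lemma-struc-new}) is exactly the property that the cones $\CQ_n^{(r)}$ fail to possess, which is why the same monotonicity statement is unavailable for $\nrank$.
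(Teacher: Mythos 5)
Your proof is correct and is essentially the paper's own argument: both rest on the observation that $\alpha(H)=\alpha(G)$ together with $E(H)\subseteq E(G)$ forces $M_G-M_H=\alpha(A_G-A_H)$ to be entrywise nonnegative, after which this difference is absorbed at zero cost in the level. The paper inlines the absorption (reusing the multiplier $p$ witnessing $M_H\in\tilde{\CQ}_n^{(r)}$ and noting $p\cdot x^TM_Gx\geq_c p\cdot x^TM_Hx$), whereas you route it through \cref{lemma-struc-new}~ii) with $r_1=0$ --- the same computation in packaged form, since the multiplier of degree $0$ there is the constant $1$.
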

\begin{proof}
Observe that $M_G\geq M_H$, therefore $(\sum_{\val}c_{\val}x^{\val})x^TM_Gx\geq_c (\sum_{\val}c_{\val}x^{\val})x^TM_Hx$.
\end{proof}
\begin{theorem}\label{coro-L_k}
For any $k\geq 2$, we have
\begin{itemize}
 \item [(i)] $\nrank(L_k)\geq \alpha(L_k)-1$, i.e., $\nu^{(\alpha(L_k)-2)}(L_k)>\alpha(L_k)$.
 \item [(ii)] $\nnrank(L_k)=1$.
 \end{itemize}
\end{theorem}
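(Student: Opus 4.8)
For (i) the plan is to apply \cref{theo-nrank} to the stable set $S=S_k=\{s_1,\dots,s_k\}$, which has size $k=\alpha(L_k)$. Condition (i) of that theorem is immediate: every vertex of type $a_{ij},b_{ij},c_{ij}$ is adjacent to one of the $s_\ell$, so $S_k^\perp=V$ and hence $\alpha(L_k\setminus S_k^\perp)=0=\alpha(L_k)-|S_k|$. By the symmetry of the construction in the index pairs, I would verify condition (ii) only for a single representative $S'=S_k\setminus\{s_p,s_q\}$ of size $k-2$, and then read off $\nrank(L_k)\ge|S_k|-1=\alpha(L_k)-1$, i.e.\ $\nu^{(\alpha(L_k)-2)}(L_k)>\alpha(L_k)$.

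\textbf{The critical graph.} The heart of (i) is to describe $H':=L_k\setminus S'^\perp$ and its critical edges. A direct check shows that the surviving vertices are $s_p,s_q$, the three nodes $a_{pq},b_{pq},c_{pq}$ of the pair $\{p,q\}$, and, for each $m\notin\{p,q\}$, exactly one node $x_m$ adjacent to $s_p$ and one node $y_m$ adjacent to $s_q$ (all other triples are absorbed into $S'^\perp$). One computes $\alpha(H')=2$, that $x_m$ has $s_q$ as its unique non-neighbour and $y_m$ has $s_p$. Using the description of the zeros of $x^T M_{H'}x$ in \cref{minimizers-LV}, I would check that the five edges of the cycle $s_p\,a_{pq}\,b_{pq}\,s_q\,c_{pq}$ are all critical (each has a common non-neighbour inside $\{a_{pq},b_{pq},c_{pq}\}$), and that every pendant edge $\{s_p,x_m\}$, $\{s_q,y_m\}$ is critical (witnessed by $s_q$, resp.\ $s_p$). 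These critical edges already span $V(H')$ and are connected through $s_p$ and $s_q$, so $(H')_c$ is connected and \cref{theo-nrank} applies.

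\textbf{Part (ii), lower bound and the obstruction.} Since $\tilde{\CQ}^{(0)}_n=\CQ^{(0)}_n$, we have $\nnrank(G)=0$ iff $\nrank(G)=0$; by part (i), $\nrank(L_k)\ge k-1\ge 1$ for $k\ge2$, so $M_{L_k}\notin\tilde{\CQ}^{(0)}$ and $\nnrank(L_k)\ge1$. For the upper bound I expect the naive route---applying \cref{prop-lrank1} directly to $L_k$---to fail, precisely because the $K_{3,3}$ connections force condition (ii) to break: any set $S$ of ``middle'' vertices with $L_k\setminus i^\perp$ perfect must contain some $c_{pq}$ (a common neighbour of the non-adjacent pair $s_p,s_q$) or, excluding the $c$'s, some $a_{pq}$ (a common neighbour of $s_p$ and a far-away $c_{lm}$). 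The fix is to first pass to an edge-subgraph via \cref{lemma-sub}.

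\textbf{Part (ii), upper bound.} Concretely, I would let $H$ be obtained from $L_k$ by deleting every cross-pair edge joining a $c$-vertex to an $a$- or $b$-vertex (all $\{c_{pq},a_{lm}\}$ and $\{c_{pq},b_{lm}\}$ with $\{p,q\}\ne\{l,m\}$), while keeping the $c$--$c$ edges, the $a/b$--$a/b$ edges, and all five-cycle edges. In $H$ the $a/b$-vertices form a clique $A$, the $c$-vertices form a clique $C$, and there is no edge between $A$ and $C$. The step I expect to be the main obstacle is verifying that this deletion preserves the independence number: a stable set meets each of $A,C$ in at most one vertex, and a chosen $c_{pq}$ still blocks both $s_p$ and $s_q$, so a short case analysis gives $\alpha(H)=k=\alpha(L_k)$. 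Then \cref{prop-lrank1} applies to $H$ with $S=A$: for $i\in A$ the graph $H\setminus i^\perp$ is the clique $C$ with the $s$-vertices attached simplicially, hence chordal and perfect, so $\nnrank(H\setminus i^\perp)=0$; and condition (ii) holds since in $V(H)\setminus A=S_k\cup C$ the only non-adjacent pairs are $\{s_p,s_q\}$ and $\{s_p,c_{lm}\}$ with $p\notin\{l,m\}$, neither having a common neighbour in $A$ (no $A$-vertex touches two $s$'s, and in $H$ no $c$-vertex touches $A$). Thus $\nnrank(H)\le1$, and \cref{lemma-sub} gives $\nnrank(L_k)\le\nnrank(H)\le1$; combined with the lower bound, $\nnrank(L_k)=1$.
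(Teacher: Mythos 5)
Your proposal is correct and follows the paper's route exactly: part (i) applies \cref{theo-nrank} with $S=S_k$, and part (ii) passes to the same edge-deleted subgraph (your $H$ is precisely the paper's $L_k'$), applies \cref{prop-lrank1} with $S=C_2$, and transfers back via \cref{lemma-sub}, with the lower bound $\nnrank(L_k)\ge 1$ made explicit from part (i). What is noteworthy is that your verifications are more accurate than the paper's own at the two points where the detail matters. The paper asserts that $L_k\setminus S'^\perp$ is isomorphic to $C_5$; this holds only for $k=2$, since for $k\ge 3$ exactly one middle vertex of each pair $\{p,m\}$ and $\{q,m\}$ survives --- your vertices $x_m,y_m$ --- and your check that the pendant edges $\{s_p,x_m\}$ and $\{s_q,y_m\}$ are critical (witnessed by $s_q$, resp.\ $s_p$) is exactly what makes the critical-edge graph connected so that \cref{theo-nrank} still applies. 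Likewise, the paper claims $L_k'\setminus i^\perp$ ``has no cycles'' for $i\in C_2$, which fails for $k\ge 3$ because the $c$-vertices form a clique of size $\binom{k}{2}$; your observation that this graph is a clique with the $s$-vertices attached simplicially, hence chordal and perfect, is the correct justification for $\nnrank(L_k'\setminus i^\perp)=0$. (The paper's intermediate bound $\alpha(L_k'\setminus i^\perp)\le k-2$ is also off by one --- it equals $k-1$, witnessed by $\{s_2,\dots,s_k\}$ --- though $\alpha(L_k')=k$ still follows; your case analysis sidesteps this.) Two tiny slips on your side, neither of which breaks anything: for the cycle edges $\{s_p,a_{pq}\}$ and $\{b_{pq},s_q\}$ the common non-neighbour is not inside $\{a_{pq},b_{pq},c_{pq}\}$ but is $s_q$, resp.\ $s_p$; and criticality is certified by the combinatorial characterization of critical edges stated just before \cref{minimizers-LV} (existence of a common witness set $S$), rather than by \cref{minimizers-LV} itself.
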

\begin{proof}
For part (i) we apply Theorem \ref{theo-nrank}. We check that the set $S_k$ satisfies the conditions. Note that $G\setminus S_k^\perp$ is the empty graph, so we have $0=\alpha(L_k\setminus S_k^\perp)= \alpha(L_k)-|S_k|$. Now, for any subset $S'\subseteq S_k$ with $|S'|=k-2$, the graph $L_k\setminus S'^\perp$ is isomorphic to $C_5$, which is critical and connected. Then, by Theorem \ref{theo-nrank}, we have  $\nrank(L_k)\geq |S_k|-1=\alpha(L_k)-1$.

For part (ii), we define the graphs $L_k'$, obtained by starting with the graph $L_k$ and deleting all edges of the form $\{a_{ij},c_{mn}\}$ and $\{b_{ij}, c_{mn}\}$ when $\{i,j\}\neq \{m,n\}$. Observe that $C_1=\{c_{ij}: \{i,j\}\in \{1, \dots, k\}^2 \}$ and $C_2=\{ a_{ij}, b_{ij}:  \{i,j\} \in \{1, \dots, k\}^2\}$ induce cliques in  $L_k$. We now show that $\nnrank(L_k')=1$ by using Proposition \ref{prop-lrank1}. For this, we set $S=C_2$. Using that $C_2$ is a clique, we observe that, for every $i\in C_2$, the graph $L_k'\setminus i^\perp$ has no cycles and therefore $\nnrank(L_k'\setminus i^\perp)=0$. Condition (ii) is straightforward to check.

Note that $V(L_k')=V(L_k)$, and $E(L_k')\subseteq E(L_k)$. Then, in order to apply Lemma \ref{lemma-sub}, it remains to prove that $\alpha(L_k')=k$. First, observe that $\alpha(L_k)\geq k$, as $S_k$ is stable. Let $A$ be a stable set in $L_k'$. We will show that $|A|\leq k$. First, assume there exists $i\in A\cap C_2$. Then $A\setminus \{i\}$ is stable in $L_k'\setminus i^\perp$. It is easy to observe that, for $i\in C_2$, we have $\alpha(L_k'\setminus i^\perp)\leq k-2$. Thus, in this case, $|A|\leq k-1$. Now, we assume $A\cap C_2 =\emptyset$. Then, $A$ is stable in the induced subgraph $L_k'[S_k\cup C_1]$. It is easy to observe that $\alpha(L_k'[S_k\cup C_1])=k$, thus showing that $|A|\leq k$.
\end{proof}

\section{New polyhedral-based hierarchy}
In this section, we consider a new inner linear approximation for $\COP_n$. In this paper, we have introduced the cones $\newQ_n^{(r)}$, which are based on a certificate of copositivity for a matrix $M\in\CS^n$ using sums of squares of polynomials. Now, we will examine a different and weaker certificate for copositivity. Specifically, suppose there exists a nonzero polynomial $p \in \mathbb{R}[x_1, \dots, x_n]_r$ with nonnegative coefficients such that the expression $p \cdot x^T M x$ also has nonnegative coefficients. In that case, we can conclude that $M$ is copositive. We will consider the following cones, denoted as $\tilde{\mathcal{C}}_n^{(r)} \subseteq \COP_n$.
\[
\tilde{\mathcal{C}}_n^{(r)}=\Big\{M\in \CS^n: p\cdot x^TMx \in \Rplus{n,r+2} \text{ for some } p\in \Rplus{n,r}  \text{ with } \|p\|=1 \Big\}.
\]

Similar to the cones $\newQ_n^{(r)}$, the cones $\tilde{\mathcal{C}}_n^{(r)}$ serve as a generalization of an existing class of cones $\CC_n^{(r)}$ introduced by de Klerk and Pasechnik \cite{deKlerk_Pasechnik_2002}. The key distinction is that for the cones  $\tilde{\mathcal{C}}_n^{(r)}$ we allow the multiplier $p$ to be any nonzero degree-$r$ homogeneous polynomial with nonnegative coefficients, while in the definition of the cones $\CC_n^{(r)}$ this multiplier is always $(\sum_{i=1}^nx_i)^r$.

In \cite{deKlerk_Pasechnik_2002} de Klerk and Pasechnik introduced the bounds $\zeta^{(r)}(G)$ as the relaxation of problem~(\ref{alpha-cop}) obtained through the substitution of $\COP_n$ with the cones $\CC_n^{(r)}$. We now introduce the bounds $\tilde{\zeta}^{(r)}(G)$ as follows.
\begin{align}\label{zeta-tilde}
\tilde{\zeta}^{(r)}(G)=\min\Big\{t: t(A_G+I)-J\in \tilde{\mathcal{C}}_{n}^{(r)}\}
\end{align}

Notice that testing the membership of a matrix symmetric $M\in \CS^n$ in the cone $\tilde{\CC}_n^{(r)}$ amounts to check whether a system of linear inequalities is feasible. Then, for a fixed $r\in \mathbb{N}$, the parameter $\tilde{\zeta}^{(r)}(G)$ can be approximated (up to given precision) by performing a binary search in the interval $[0,n]$. Notice that this procedure can be performed in polynomial time.

Clearly, $\alpha(G)\leq \tilde{\zeta}^{(r)}(G)\leq  \zeta^{(r)}(G)$ for all $r\in \N$. It is known that $\zeta^{(r)}(G)$ converges asymptotically to $\alpha(G)$ as $r \to \infty$ \cite{deKlerk_Pasechnik_2002}. Thus, the same result holds for the hierarchy $\tilde{\zeta}^{(r)}(G)$.

As shown in \cite{deKlerk_Pasechnik_2002} (see also \cite{Pena_Vera_Zuluaga_2007}) the hierarchy $\zeta^{(r)}(G)$ does not have finite convergence to $\alpha(G)$ unless $G$ is a complete graph. A natural question to ask is whether finite convergence is achieved by the hierarchy $\tilde{\zeta}^{(r)}(G)$; we answer this question negatively next. 

\begin{theorem}\label{main-linear}
Let $G$ be a graph which is not complete. Then $\tilde{\zeta}^{(r)}(G)>\alpha(G)$ for all $r\in \N$.
\end{theorem}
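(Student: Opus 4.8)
The plan is to prove the equivalent statement that $M_G:=\alpha(A_G+I)-J\notin\tilde{\mathcal{C}}_n^{(r)}$ for every $r$. This suffices: since $\tilde{\mathcal{C}}_n^{(r)}\subseteq\COP_n$ we always have $\tilde{\zeta}^{(r)}(G)\ge\alpha(G)$, and for $t<\alpha(G)$ the matrix $t(A_G+I)-J$ is not copositive (it is negative at the indicator of a maximum stable set), so only $t=\alpha(G)$ could give equality. I therefore assume for contradiction that there is a nonzero $p\in\Rplus{n,r}$ whose product $F:=p\cdot x^{T}M_Gx$ has nonnegative coefficients, and I set $\alpha:=\alpha(G)\ge2$ (as $G$ is not complete).

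The central step is a vanishing principle coming from the zeros of $x^{T}M_Gx$. For any $x^*\in\mathbb{R}^n_+$ with $(x^*)^{T}M_Gx^*=0$ we have $F(x^*)=p(x^*)\cdot 0=0$; since $F$ has nonnegative coefficients and $x^*\ge0$, every monomial $x^\delta$ with $(x^*)^\delta>0$ must have coefficient $0$, i.e. $[x^\delta]F=0$ whenever $\supp(\delta)\subseteq\supp(x^*)$. By \cref{minimizers-LV}, as $x^*$ ranges over these zeros $\supp(x^*)$ ranges over all $W\subseteq V$ for which $G[W]$ is a disjoint union of $\alpha$ cliques — in particular over every maximum stable set. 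For each such $W$ this says $F|_W\equiv0$ (setting $x_i=0$ for $i\notin W$), and since $F|_W=(p|_W)\cdot\big(x^{T}(M_G)_Wx\big)$ with the second factor nonzero (its diagonal entries equal $\alpha-1\ge1$), the integral-domain property of $\mathbb{R}[x]$ forces $p|_W\equiv0$. Thus $p$ has no monomial supported inside any such $W$. This is exactly the argument proving $\zeta^{(r)}(G)>\alpha$ for the de Klerk--Pasechnik cones; the only difference is that their multiplier $(\sum_i x_i)^r$ cannot vanish on a face and yields an immediate contradiction, whereas here $p$ may legitimately vanish on stable sets.

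To conclude I would exhibit a monomial of $F$ with a negative coefficient. The key calculation: fix a non-edge $\{a,b\}$ and a term order with $x_a\succ x_b\succ x_c$ for all other $c$, and let $\mu=\mathrm{LM}(p)$. Among all quadratic monomials only $x_a^2$ and $x_ax_b$ are $\succeq x_ax_b$, so in the decompositions $\mu+\ve_a+\ve_b=\gamma'+\ve_k+\ve_l$ with $\gamma'\in\supp(p)$ (hence $\gamma'\preceq\mu$) only $\{k,l\}=\{a,b\}$ and $\{k,l\}=\{a,a\}$ survive; all others give $\gamma'\succ\mu$, outside $\supp(p)$. Hence $[x^{\mu+\ve_a+\ve_b}]F=-2p_\mu+(\alpha-1)\,p_{\mu+\ve_b-\ve_a}$, the second term occurring only if $\mu_a\ge1$. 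If some non-edge has an endpoint $a$ not occurring in $p$, then taking that $a$ largest forces $\mu_a=0$, the coefficient is $-2p_\mu<0$, and we are done. One may first pass to the induced subgraph on the variables actually occurring in $p$: in the remaining ``hard'' case every non-edge has both endpoints occurring in $p$, so every maximum stable set lies in that variable set and the restriction preserves $\alpha$.

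The hard part is precisely this remaining case, where $p$ uses every variable and the competitor term $(\alpha-1)p_{\mu+\ve_b-\ve_a}$ cannot be killed by maximality — it corresponds to shifting weight from the larger to the smaller non-edge variable, always a term-order \emph{decrease}. My intended remedy is a descent on degree: take $\mu$ of minimal support $T$ with $G[T]$ non-complete (a clique support contributes only the nonnegative form $(\alpha-1)(\sum_{i\in T}x_i)^2$ and is harmless), factor $\prod_{i\in T}x_i$ out of the full-support-$T$ part of $p$, and use minimality to show that the full-support-$T$ coefficients of $F$ coincide with the coefficients of $\hat p\cdot x_T^{T}(M_G)_Tx_T$ for a nonzero nonnegative $\hat p$ of degree $r-|T|$. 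This is a strictly smaller multiplier, but $(M_G)_T=\alpha(A_{G[T]}+I)-J$ carries the \emph{global} $\alpha$, which can exceed $\alpha(G[T])$ — and for $t>\alpha(H)$ such forms \emph{do} admit nonnegative multipliers (e.g. $(x_1+x_2)(x_1^2-x_1x_2+x_2^2)=x_1^3+x_2^3$). Reconciling this independence-number mismatch, so that the descent lands on a sub-instance still of the extremal type $t=\alpha$, while simultaneously isolating a genuinely negative coefficient against the competitor term, is the crux of the proof and the step I expect to demand the most care.
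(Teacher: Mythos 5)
Your preparatory steps are sound and in fact reproduce the paper's own lemmas: your vanishing principle (a nonnegative-coefficient product must vanish monomial-by-monomial on the support of any zero of $x^TM_Gx$ in $\R^n_+$, forcing $p|_W\equiv 0$ for every maximum stable set $W$) is exactly \cref{lemma-isolated}, proved there by restriction to the empty graph rather than via \cref{minimizers-LV}, and your leading-monomial computation $[x^{\mu+\ve_a+\ve_b}]F=-2p_\mu+(\alpha-1)\,p_{\mu+\ve_b-\ve_a}$ is correct. But the proposal has a genuine gap, which you yourself flag: the ``hard case'' is not resolved, and the descent you sketch cannot work as stated. Passing to an induced subgraph $G[T]$ replaces the question for $\alpha(A_G+I)-J$ by one for $\alpha(A_{G[T]}+I)-J$ carrying the \emph{global} $\alpha$, and your own example $(x_1+x_2)(x_1^2-x_1x_2+x_2^2)=x_1^3+x_2^3$ shows that for $t>\alpha(H)$ the matrix $t(A_H+I)-J$ genuinely does admit a nonnegative multiplier certificate. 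Since nothing in your argument prevents the descent from landing immediately in this regime, no contradiction is available at the bottom of the recursion and the proof does not close.

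The missing idea — the paper's route — is to move not \emph{down} to induced subgraphs but \emph{sideways} to a quotient graph on which the parameter stays extremal. Fix a maximum stable set $S$ and let $H$ be obtained from $G$ by adding every edge not joining two vertices of $S$; then $\alpha(H)=\alpha(G)$ and $M_H-M_G$ is entrywise nonnegative, so the assumed certificate survives: $p\cdot x^TM_Hx\in\Rplus{n,r+2}$. Now identify variables: substitute $x_v\mapsto x_0/|V\setminus S|$ for all $v\in V\setminus S$. This is a nonnegative substitution, so it preserves nonnegativity of all coefficients (no cancellation can occur, and $p$ maps to a nonzero $p'$), and a direct computation shows it sends $x^TM_Hx$ exactly to $x^TM_{T_{\alpha}}x$ for the star $T_\alpha$ with $\alpha$ leaves — crucially with $t=\alpha=\alpha(T_\alpha)$, so the independence-number mismatch that sinks your descent never arises. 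The star case is then finished with precisely your vanishing principle: applied to the stable set of leaves it shows every monomial of $p'$ is divisible by the hub variable $x_0$, and dividing by $x_0$ drops $r$ by one while keeping all coefficients nonnegative; induction on $r$ (with base case $r=0$ ruled out because $M_{T_\alpha}$ has negative entries) gives the contradiction (\cref{fan}). In short: your certificate-vanishing and coefficient calculations are right, but the reduction must go via edge-completion plus vertex-identification onto the star, not via induced subgraphs, and without that step the crux you identify remains open.
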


Theorem~\ref{main-linear} is equivalent to showing that if
$p\in \mathcal{N}_{n,r}$ is such that
\begin{equation}\label{finite-linear}
 p\cdot x^TM_Gx \in \Rplus{n,r+2}
\end{equation} 
then $p=0$. To show the theorem, we first prove it for star graphs, as detailed in \cref{fan}. 
Subsequently, we show that if equation~\eqref{finite-linear} is satisfied for a graph \( G \), it follows that the same equation must hold for the star graph with \( \alpha(G) \) leaves. In pursuit of this objective, we must first introduce some preliminary results.

\begin{lemma}
Let $G$ be the graph consisting of $n$ isolated nodes. Let $r>0$ and assume that equation (\ref{finite-linear}) holds for some $p\in \mathcal{N}_{n,r}$. Then, $p=0$.
\end{lemma}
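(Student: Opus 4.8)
The plan is to exploit the fact that, for $n$ isolated nodes, the quadratic form $x^T M_G x$ has a common zero on the nonnegative orthant --- namely the all-ones vector --- and to combine this with the assumed nonnegativity of the coefficients of the product.

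First I would compute $M_G$ explicitly. Since $G$ has no edges, $A_G = 0$ and $\alpha(G) = n$, so $M_G = nI - J$ and
\[
x^T M_G x = (n-1)\sum_{i} x_i^2 - 2\sum_{i<j} x_i x_j .
\]
In particular $\mathbf{1}^T M_G \mathbf{1} = n\cdot n - n^2 = 0$, i.e. the form vanishes at $x = \mathbf{1}$. (We may assume $n \ge 2$; for $n=1$ the graph is the complete graph $K_1$ and $M_G = 0$, so there is nothing to prove.)

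Now set $q := p \cdot x^T M_G x$. By hypothesis $q \in \mathcal{N}_{n,r+2}$, so $q$ is a homogeneous polynomial of degree $r+2$ all of whose coefficients are nonnegative. Evaluating at the all-ones vector gives, on the one hand, $q(\mathbf{1}) = p(\mathbf{1})\cdot\bigl(\mathbf{1}^T M_G \mathbf{1}\bigr) = 0$, and on the other hand $q(\mathbf{1})$ equals exactly the sum of the coefficients of $q$. A sum of nonnegative numbers equal to $0$ forces every coefficient of $q$ to vanish, hence $q \equiv 0$ as a polynomial. Since $n \ge 2$, the form $x^T M_G x$ is a nonzero polynomial and $\mathbb{R}[x_1,\dots,x_n]$ is an integral domain, so from $p\cdot x^T M_G x = 0$ we conclude $p = 0$, as required.

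There is no genuine obstacle here: the single insight is the choice of evaluation point $\mathbf{1}$, which is (up to scaling) precisely the minimizer of $x^T M_G x$ on the simplex predicted by \cref{minimizers-LV} when $G$ is the disjoint union of $\alpha(G)$ singleton cliques. A more computational alternative would compare the coefficient of each monomial $x^\delta$ with $|\delta|=r+2$ directly, giving the inequalities $(n-1)\sum_i p_{\delta-2e_i} \ge 2\sum_{i<j} p_{\delta-e_i-e_j}$; but summing these over all $\delta$ merely recovers the identity $\sum_\delta q_\delta = q(\mathbf{1}) = 0$, so the evaluation argument is the cleaner route and, incidentally, never uses the hypothesis $r>0$.
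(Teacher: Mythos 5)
Your proof is correct and is essentially the paper's own argument: the paper likewise evaluates the identity at the all-ones vector, where $x^T M_G x = n\sum_i x_i^2 - (\sum_i x_i)^2$ vanishes, concludes $\sum_\beta a_\beta = 0$ hence every nonnegative coefficient $a_\beta$ is zero, and then cancels the nonzero factor $x^T M_G x$ to deduce $p = 0$ (a step you spell out via the integral-domain property, and which the paper leaves implicit). One small caveat: for $n = 1$ the statement is actually false rather than ``nothing to prove'' (there $M_G = 0$, so any $p$ satisfies the hypothesis), so both your proof and the paper's implicitly require $n \ge 2$ --- which is all that is ever used downstream, since the lemma is invoked only with $n = \alpha(G) \ge 2$ for non-complete $G$ --- and your observation that the hypothesis $r > 0$ is never needed is accurate.
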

\begin{proof} Let $r>0$.
Assume that there exist some nonnegative scalars $c_\alpha$ (for $\alpha\in \mathbb{N}_r^{n}$) and $a_\beta$ (for $\beta \in \mathbb{N}_{r+2}^n$) such that the following identity holds.
\[
\left(\textstyle \sum_{\alpha\in \N^n_r}  c_\alpha x^{\alpha}\right) x^TM_Gx  = \textstyle\sum_{\beta\in \N^n_{r+2}}a_\beta x^\beta,
\]
 Now we replace $x=(1,1, \dots, 1)$. The left-hand side is equal to zero, as for such $x$ we have $x^TM_Gx = n \sum_i x_i^2 - (\sum_i x_i)^2 =0$. Then, we obtain $\sum a_\beta =0$, and thus $a_\beta=0$ for all $\beta\in \N^n_{r+2}$. This implies that $p=0$.
\end{proof}
\begin{lemma}\label{lemma-isolated}
Let $r >0$ and let $G$ be a graph such that equation (\ref{finite-linear}) holds with \linebreak $p= \sum_{\alpha\in \N^n_r}  c_\alpha x^{\alpha}\in \mathcal{N}_{n,r}$. Let $S$ be a stable set with $|S|=\alpha(G)$. Then, $c_\alpha =0$ for every $\alpha\in \mathbb{N}_r^{n}$ such that $supp(\alpha)\subseteq S$.
\end{lemma}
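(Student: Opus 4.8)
The plan is to restrict the hypothesised identity to the variables indexed by $S$, thereby reducing the claim to the lemma already proved for isolated nodes. The key observation is that, since $S$ is stable, the principal submatrix $(M_G)_{S,S}$ equals $\alpha(G)\,I-J$, which is exactly the graph matrix of the empty graph on the $\alpha(G)$ vertices of $S$. Indeed, $(A_G)_{S,S}=0$ and $|S|=\alpha(G)$, so $(M_G)_{S,S}=\alpha(G)\big((A_G)_{S,S}+I\big)-J=\alpha(G)\,I-J$.

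Concretely, I would write (\ref{finite-linear}) as
\[
\Big(\sum_{\alpha\in\N_r^n} c_\alpha x^\alpha\Big)\,x^T M_G x=\sum_{\beta\in\N_{r+2}^n} a_\beta x^\beta,
\]
with all $c_\alpha\ge 0$ and all $a_\beta\ge 0$. Substituting $x_i=0$ for every $i\notin S$ deletes precisely the monomials whose support is not contained in $S$ on both sides, and in particular preserves the nonnegativity of every surviving coefficient. Setting $p_S:=\sum_{\supp(\alpha)\subseteq S} c_\alpha x^\alpha$, this yields
\[
p_S(x)\cdot x^T (M_G)_{S,S}\,x=\sum_{\supp(\beta)\subseteq S} a_\beta x^\beta\in\CN_{\alpha(G),\,r+2},
\]
which is exactly an instance of (\ref{finite-linear}) for the empty graph on $S$, with multiplier $p_S\in\CN_{\alpha(G),r}$.

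Applying the isolated-nodes lemma (with $n$ replaced by $\alpha(G)$) then gives $p_S=0$, i.e. $c_\alpha=0$ for every $\alpha$ with $\supp(\alpha)\subseteq S$, as required. The mechanism behind that reduction is transparent here: evaluating the $S$-restricted identity at the all-ones point on $S$ makes the quadratic factor vanish, since there $x^T(M_G)_{S,S}x=\alpha(G)\sum_{i\in S}x_i^2-\big(\sum_{i\in S}x_i\big)^2=\alpha(G)^2-\alpha(G)^2=0$; hence $\sum_{\supp(\beta)\subseteq S} a_\beta=0$, and as these coefficients are nonnegative they all vanish, forcing $p_S\cdot x^T(M_G)_{S,S}x\equiv 0$.

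The only genuine point to watch is that the quadratic factor $q:=\alpha(G)\sum_{i\in S}x_i^2-\big(\sum_{i\in S}x_i\big)^2$ be a nonzero polynomial, so that $p_S\,q\equiv 0$ forces $p_S=0$ in the integral domain $\R[x]$. This holds precisely when $\alpha(G)\ge 2$ (each $x_i^2$ having coefficient $\alpha(G)-1$); the degenerate case $\alpha(G)=1$ corresponds to $G$ complete, where $M_G=0$ and the statement is vacuous, and it is excluded in the intended application to non-complete graphs. Beyond this bookkeeping, no step presents a serious obstacle.
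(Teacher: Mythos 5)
Your proof is correct and is essentially the paper's own argument: the paper likewise substitutes $x_i=0$ for $i\notin S$, observes that the resulting identity is an instance of (\ref{finite-linear}) for the graph of $\alpha(G)$ isolated nodes (the hypothesis $|S|=\alpha(G)$ being exactly what makes $(M_G)_{S,S}=\alpha(G)I-J$ the correct graph matrix), and invokes the preceding isolated-nodes lemma. Your closing remark about $\alpha(G)=1$ is a fair piece of bookkeeping the paper leaves implicit (there $x^T M_G x\equiv 0$ and the isolated-nodes lemma itself degenerates), but it does not alter the substance of the argument.
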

\begin{proof}
By replacing $x_i=0$ for $i\notin S$ we obtain an equation of the form (\ref{finite-linear}) for the graph consisting of $\alpha(G)$ isolated nodes. By the previous lemma, we obtain that $c_\alpha =0$ for every $\alpha\in \mathbb{N}_r^{n}$ such that $supp(\alpha)\subseteq S$.
\end{proof}
\begin{lemma}\label{fan} Let $T_n$ be the \textit{star graph},  with vertex set $\{0,1,\dots, n\}$ and edge set $\{\{0,i\}:i=1,\dots,n\}$. Then,
$\tilde{\zeta}^{(r)}(T_n) >n = \alpha(T_n)$ for every $r\in \N$.
\end{lemma}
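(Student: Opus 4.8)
The plan is to use the reformulation recorded just after Theorem~\ref{main-linear}: it suffices to show that the only $p\in\Rplus{n+1,r}$ (a homogeneous degree-$r$ polynomial in the $n+1$ variables $x_0,x_1,\dots,x_n$ indexed by $V(T_n)$) with $p\cdot x^TM_{T_n}x\in\Rplus{n+1,r+2}$ is $p=0$. Throughout I take $n\ge 2$, the case $n=1$ being $K_2$, where the claim is not expected (indeed $M_{K_2}=0$). First I would make the combinatorial structure explicit by computing, with $s=\sum_{i=1}^n x_i$,
\[
x^TM_{T_n}x=\underbrace{\big[(n-1)x_0^2+2(n-1)x_0 s\big]}_{=:x_0 G}+\underbrace{\Big(n\sum_{i=1}^n x_i^2-s^2\Big)}_{=:F},
\]
so that $x^TM_{T_n}x=F+x_0G$ is exactly the splitting into the part of $x_0$-degree $0$ and the part of $x_0$-degree $\ge 1$. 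The key observation is that $F=x^TM_{\overline{K_n}}x$ is the form associated with the empty graph $\overline{K_n}$ on the $n$ leaves, and $F$ is precisely where the negative coefficients $-2x_ix_j$ live.

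The main step is an induction on $k=0,1,\dots,r$ showing that $x_0^k$ divides $p$. Assuming $p=x_0^k\tilde q$ with $\tilde q\in\Rplus{n+1,r-k}$, write $\tilde q=\tilde q^{(0)}+x_0(\cdots)$, where $\tilde q^{(0)}:=\tilde q|_{x_0=0}\in\Rplus{n,r-k}$ collects the monomials free of $x_0$. Substituting $x^TM_{T_n}x=F+x_0G$ gives $p\cdot x^TM_{T_n}x=x_0^k\tilde qF+x_0^{k+1}\tilde qG$; since $F$ has $x_0$-degree $0$ while the second summand has $x_0$-degree $\ge k+1$, the part of $p\cdot x^TM_{T_n}x$ of $x_0$-degree exactly $k$ equals $x_0^k\big(\tilde q^{(0)}F\big)$. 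As $p\cdot x^TM_{T_n}x\in\Rplus{n+1,r+2}$ has only nonnegative coefficients, this forces $\tilde q^{(0)}F\in\Rplus{n,r-k+2}$ as a polynomial in $x_1,\dots,x_n$. Evaluating at the all-ones vector (exactly as in the lemma for the empty graph, since $F(\mathbf 1)=n\cdot n-n^2=0$) then forces every coefficient of $\tilde q^{(0)}F$ to vanish; as $F\ne 0$ for $n\ge 2$ and the polynomial ring is an integral domain, $\tilde q^{(0)}=0$. Hence $x_0$ divides $\tilde q$, i.e.\ $x_0^{k+1}$ divides $p$, completing the inductive step. (The instance $k=0$, giving $x_0\mid p$, is precisely Lemma~\ref{lemma-isolated} applied to the leaf set.)

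Since the inductive step is valid for every $k=0,1,\dots,r$, it yields that $x_0^{r+1}$ divides the degree-$r$ polynomial $p$, which forces $p=0$, as required. I expect the only genuine obstacle to be the bookkeeping that isolates the top $x_0$-degree component and shows the empty-graph form $F$ is the sole survivor there; once the $x_0$-grading is in place, the copositivity of $M_{T_n}$ (which defeats any crude positivity argument) is sidestepped, and the single negative term of $F$ does all the work. A secondary point to verify is the reduction to $n\ge 2$ together with the fact that $\tilde q^{(0)}\in\Rplus{n,r-k}$ genuinely inherits nonnegative coefficients from $p$ via the factorization $p=x_0^k\tilde q$.
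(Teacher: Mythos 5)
Your proposal is correct and follows essentially the same route as the paper: the paper fixes a minimal $r$, invokes Lemma~\ref{lemma-isolated} (the leaves form a maximum stable set) to get $x_0 \mid p$, and divides out $x_0$ to contradict minimality, while you unroll that descent into an induction on the power of $x_0$ dividing $p$, with your grading identity $x^T M_{T_n}x = F + x_0 G$ and the all-ones evaluation of $F$ reproducing inline the content of the paper's empty-graph lemma and Lemma~\ref{lemma-isolated}. Your explicit care about $n\ge 2$ (the lemma is only invoked for non-complete $G$, where $\alpha(G)\ge 2$) and about coefficient nonnegativity surviving division by $x_0^k$ are correct refinements of the same mechanism.
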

\begin{proof}
Assume, by sake of contradiction, that $\tilde{\zeta}^{(r)}(T_n)=n$, and that $r$ is minimum. Notice that $r >0$ as $M_{T_n}$ has some negative entries. Let  $p= \sum_{\alpha\in \N^n_r}  c_\alpha x^{\alpha}\in \mathcal{N}_{n,r}$ be the multiplier in equation (\ref{finite-linear}). By Lemma \ref{lemma-isolated}, we have that $c_\alpha=0$ for all $\alpha\in \mathbb{N}_r^{n}$ such that $supp(\alpha)\subseteq \{1, 2, \dots, n\}$. This implies that $p$ is divisible by $x_0$. Since $p\neq 0$, then $\frac{1}{x_0}p\in \mathcal{N}_{n,r-1}$ is a polynomial with nonnegative coefficients and $(\frac{1}{x_0}p)\cdot x^TM_Gx \in \mathcal{N}_{n,r+1}$. Hence, $\tilde{\zeta}^{(r-1)}(T_n)=\alpha(T_n)$ contradicting the minimality of $r$.
\end{proof}
\begin{proof}[Proof of Theorem \ref{main-linear}]
For sake of contradiction, assume $r$ is such that $\tilde{\zeta}^{(r)}(G)=\alpha(G)$. Let $p= \sum_{\substack {\alpha\in \N^n_r}}  c_\alpha x^{\alpha} \in \mathcal{N}_{n,r}\setminus\{0\}.$
Let $S$ be a stable set in $G$ of size $\alpha(G)$. Let $H = (V(G),E)$ be the graph with edge set $E =\{\{u,v\}: u\neq v, \{u,v\} \nsubseteq S \}$ that is, $H$ is obtained by adding to $G$ all edges not connecting two vertices from $S$. In particular,   $\alpha(H) = \alpha(G)$ and
$M_H - M_G$ has non-negative entries. Thus
$p(x) (x^TM_Hx) = p(x) (x^TM_Gx) + p(x) (x^T(M_H - M_G)x) \in \Rplus{n,r+2}$.  Now, by replacing $x_v \leftarrow \tfrac {1}{|V\setminus S|}x_0$ for all $v \in V\setminus S$ we obtain that the polynomial $x^TM_{H}x$ is transformed into $x^TM_{T_{\alpha(G)}}x$, where $T_{\alpha(G)}$ is the graph as in Lemma \ref{fan}. Also, we observe that performing this substitution transforms $p$ into $p' \in \Rplus{1+\alpha(G),r+2}$ where $p'\neq 0$, as this transformation only adds groups of (nonnegative) coefficients. Therefore, $\tilde{\zeta}^{(r)}(T_{\alpha(G)})=\alpha(G)$, reaching a contradiction in view of Lemma~\ref{fan}.
\end{proof}

\section{Conclusion and Discussion}
The hierarchy $\newQ^{(r)}$ proposed in this paper is closed under borderings and diagonal scaling; however, it is not convex in general. It is natural to consider the existence of a hierarchy of convex cones that maintains closure under both scaling and borderings, and that approximates the copositive cone. Notably, the cones $\CQ^{(0)}$ and $\COP$ exhibit all three desired properties. Nonetheless, it remains an open question as to whether there exists even an intermediate cone that satisfies these criteria.

The equation \( \newQ^{(1)}_5 = \COP_5 \) establishes that the assessment of copositivity for \( 5 \times 5 \) matrices can be effectively reformulated as an SDP-feasibility problem. Notably,  \citet{deKP2007COP2LP} has shown that testing for copositivity can be formulated as an exponential-size LP-feasibility problem. Nonetheless, it remains an open question whether the case \( n=5 \) is exceptional for the \( \newQ^{(r)} \) hierarchy, or if there exists a convergence pattern across all dimensions; is there a function \( r(n) \) such that \( \newQ^{r(n)}_n = \COP_n \)? While a small \( r(n) \) is possible, it would imply significant consequences regarding the NP-hardness of the SDP feasibility problem.

The computational complexity of determining (or approximating up to fixed precision) $\tilde{\nu}^{(r)}(G)$ remains unclear. As discussed in Section \ref{sec:nuTilde}, approximating $\tilde{\nu}^{(r)}(G)$ involves solving $\log(n)$ semidefinite programming (SDP) feasibility problems. However, the complexity of general SDP feasibility is still unresolved. Therefore, we pose an open question:  can $\tilde{\nu}^{(r)}(G)$  be computed (up to a fixed precision) in polynomial time for fixed $r\in \mathbb{N}$?. In contrast, it has recently been shown that for fixed $r\in \mathbb{N}$, the bounds $\vartheta^{(r)}(G)$ and $\nu^{(r)}(G)$ can be computed (up to fixed precision) in polynomial time \cite{palomba}.

We have shown that the $\nnrank$ is low (in fact equal to one) for certain graph classes for which other SDP approaches require a high degree.  Notice that $\nnrank(G)$ cannot be bounded by a universal constant $c$ for all graphs $G$, unless SDP feasibility is NP-hard. So far, we have only produced graphs with $\nnrank$ at most 2. It remains an open problem to find a family of graphs with unbounded $\nnrank$. 

We have introduced a new linear hierarchy of approximations, denoted by $\tilde{\zeta}^{(r)}(G)$, for the independence number $\alpha(G)$. This hierarchy strengthens the previously studied $\zeta^{(r)}(G)$ hierarchy. It is known \cite{Pena_Vera_Zuluaga_2007} that $\zeta^{(r)}(G)$ depends only on $\alpha(G)$, on the size of the graph $|G|$, and on the level $r$. As a result, it is independent of the actual structure of the graph. Although this dependence simplifies the analysis of the hierarchy, it also limits the hierarchy's ability to capture meaningful graph-specific features. A natural and important question arises: Does the new hierarchy $\tilde{\zeta}^{(r)}(G)$ share this same limitation, or does it capture other structural properties of the graph?

\bibliographystyle{plainnat}
\bibliography{Qtilde}

\end{document}